\theoremstyle{plain}
\newtheorem*{theorem*}{Theorem}
\newtheorem{theorem}{Theorem}
\newtheorem{lemma}[theorem]{Lemma} 
\newtheorem{corollary}[theorem]{Corollary}
\newtheorem{proposition}[theorem]{Proposition}
\newtheorem{conjecture}[theorem]{Conjecture}
\theoremstyle{definition}
\newtheorem{remark}[theorem]{Remark}
\newtheorem{definition}[theorem]{Definition}
\numberwithin{equation}{section} 
\def\R{\mathbb R}
\def\C{\mathbb C}
\def\H{\mathbb H}
\def\abs#1{\left\vert #1\right\vert}
\newcounter{casenum}
\newcommand{\xs}{X\backslash S}
\begin{document}

\title{Harmonic maps between ideal 2-dimensional simplicial complexes}

\author{Brian Freidin\\
Brown University\\
{\tt bfreidin@math.brown.edu}
\and
Vict\`oria Gras Andreu\\
Brown University\\
{\tt vgras@math.brown.edu}}

\date{}

\maketitle


\begin{abstract}
	We prove existence and regularity results for energy minimizing maps between ideal hyperbolic 2-dimensional simplicial complexes. The spaces in question were introduced by Charitos-Papadopoulos, who describe their Teichm\"uller spaces and some compactifications. This work is a first step in introducing harmonic map theory into the Teichm\"uller theory of these spaces.
\end{abstract}


\section{Introduction}

In \cite{CP}, Charitos and Papadopoulos study finite 2-dimensional simplicial complexes. They describe how to endow each face (with vertices removed) with the structure of an ideal hyperbolic triangle, and special parameters describing the ways to glue the faces together to form the complex, characterizing those metrics that are complete. They compute the dimension of the Teichm\"uller space of complete ideal hyperbolic metrics on a complex, and describe a compactification of this space in terms of special measured foliations.

The theory of harmonic maps has been applied fruitfully in Teichm\"uller theory in many ways. To a harmonic map $f$, one associates its Hopf differential $\phi(z)dz^2$, the $(2,0)$ part of the pull-back of the target metric by $f$, and the harmonicity of $f$ is equivalent to the holomorphicity of the Hopf differential. Sampson \cite{S} and Wolf \cite{W} show that for a fixed metric $g_0$ on a surface $S$ of genus $g$, the map that associates to a hyperbolic metric $g$ on $S$ the Hopf differential of the harmonic map $f:(S,g_0)\to(S,g)$ is a homeomorphism from the Teichm\"uller space of $S$ to the space of holomorphic quadratic differentials on $(S,g_0)$.

In another direction, Gerstenhaber and Rauch propose in \cite{GR} a variational characterization of Teichm\"uller mappings, those that minimize the complex dilatation in their isotopy class, via harmonic maps. In \cite{K}, in fact, Kuwert shows that the Teichm\"uller map is harmonic with respect to a particular singular flat metric in the conformal class of the target. The aim of this paper is to introduce harmonic map theory into the Teichm\"uller theory of the ideal hyperbolic complexes of \cite{CP}, by first constructing harmonic maps.

Existence results for harmonic maps goes back at least as far as the work of Eells and Sampson \cite{ES} from '64, where they prove existence under curvature and completeness hypotheses by the heat flow method. The study of harmonic maps with singular spaces began with the seminal work of Gromov and Schoen \cite{GS}in '92, who studied harmonic maps into Riemannian simplicial complexes to prove $p$-adic superrigidity. The theory was extended	by Korevaar and Schoen \cite{KS1} to CAT(0) targets, by Eells and Fuglede \cite{EF} to polyhedral domains, and by Jost \cite{J2} to more general metric-measure domains.

For general singular targets, the most regularity one can hope for is Lipschitz continuity (c.f. \cite{GS},\cite{KS1}), and when the domain is simplicial, the map may only be H\"older continuous. In special cases, e.g. harmonic maps from 2-dimensional simplicial complexes into smooth manifolds (c.f. \cite{DM1},\cite{MY}), harmonic maps are smooth or even analytic on the faces of the domain.

While we prove regularity results analogous to those of $(\cite{DM1},\cite{MY})$, they are not a-priori diffeomorphisms. In the classical setting, Schoen-Yau \cite{SY} and independently Jost-Schoen \cite{JS} prove that every diffeomorphism between surfaces is isotopic to a harmonic diffeomorphism, or equivalently that degree one harmonic maps are diffeomorphisms. Jost \cite{J} proves an analogous result for boundary value problems. We follow the techniques of \cite{JS}, but the result that our maps are diffeomorphisms relies on a local statement for neighborhoods in a simplicial complex, which remains at this time a conjecture.

The measured foliations introduced in \cite{CP} to compactify the Teichm\"uller space are characterized up to isotopy by a list of five properties. The Hopf differential of our harmonic maps satisfy none of these, a-priori. The connection between Hopf differentials and the Teichm\"uller theory of ideal hyperbolic complexes also remains at this time unexplored.

The organization of this paper is as follows: In Section~\ref{SBack} we introduce the spaces we are considering and describe the Teichm\"uller space of complete ideal hyperbolic metrics on them. In Section~\ref{SFinite} we construct a finite energy map between two ideal hyperbolic metrics on a given complex $X$.

\begin{theorem*}[c.f. Theorem~\ref{finite energy}]
There exists a map $H\in\mathcal{D}$. That is, $H:(\xs,\sigma)\to(\xs,\tau)$ with finite energy such that $H\vert_T$ is a diffeomorphism for each face $T$ of $X$.
\end{theorem*}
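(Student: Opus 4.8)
The plan is to build the map $H$ explicitly, face by face, and then check that the pieces fit together consistently across the edges while keeping the total energy finite. The essential geometric point is that each face $T$, with its vertices removed, carries two ideal hyperbolic triangle structures: one from $\sigma$ and one from $\tau$. An ideal hyperbolic triangle is unique up to isometry, so as abstract hyperbolic surfaces the two structures on $T$ are isometric; the content is therefore not in mapping one triangle to another, but in arranging that the maps chosen on adjacent faces agree along the shared edges and that the resulting map has finite energy despite the cusps.

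First I would fix, for each face $T$, a standard model of the ideal hyperbolic triangle and isometries identifying $(T,\sigma)$ and $(T,\tau)$ with this model. Composing one isometry with the inverse of the other gives a diffeomorphism $H|_T:(T,\sigma)\to(T,\tau)$. Because an ideal triangle has a three-fold symmetry and well-defined ``midpoints'' of its edges (the feet of the perpendiculars from the opposite ideal vertex, or equivalently the points fixed by the edge-reflections), I would normalize each $H|_T$ so that it respects these distinguished points and maps each edge to the corresponding edge by the natural arclength-respecting parametrization. This normalization is what allows the face maps to be glued: along an edge $e$ shared by two faces, both adjacent face maps send $e$ to the image edge by the same parametrization, so they agree on $e$ and the glued map $H$ is continuous and piecewise-diffeomorphic on $\xs$.

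Next I would estimate the energy. On each face the map $H|_T$ is a fixed smooth diffeomorphism of a finite-area but noncompact surface, so the only danger to finiteness of $\int_T e(H)\,dA_\sigma$ comes from the three cusps. Here I would work in cusp coordinates (upper half-plane with the cusp at $\infty$, so the metric is $\frac{dx^2+dy^2}{y^2}$ and a horoball neighborhood is $\{y>c\}$). In these coordinates the energy density of a map is computed from the $\sigma$-metric against the pullback of the $\tau$-metric, and because both metrics are asymptotically the same standard cusp metric and $H|_T$ can be taken to be asymptotic to an isometry near each vertex, the energy density stays bounded while the area measure decays like $\frac{dx\,dy}{y^2}$. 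Integrating over the horoball shows each cusp contributes finite energy, so $\int_T e(H)\,dA_\sigma<\infty$, and summing over the finitely many faces gives $H\in\mathcal D$.

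The main obstacle I anticipate is the simultaneous gluing-plus-finite-energy condition at the cusps. Matching the face maps along edges is easy in the interior, but near an ideal vertex several edges and faces come together, and I must choose the cusp-model identifications consistently around the whole vertex so that the arclength normalizations on all incident edges are compatible and the map remains a diffeomorphism on each face right up to the cusp. Getting a single choice of horoball coordinates that works for every face at a given vertex, and verifying that in these coordinates $H$ is genuinely asymptotic to an isometry (and not merely bounded energy density), is the delicate step; the completeness hypothesis on the metrics from \cite{CP}, which controls exactly how the faces are glued near the vertices, is what I expect to make this possible.
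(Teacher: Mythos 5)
Your construction breaks down at the very first gluing step, and the failure is not a technicality but the entire content of the theorem. On each face $T$ the orientation-preserving isometry $(T,\sigma)\to(T,\tau)$ matching corresponding edges is \emph{unique} (the isometry group of the ideal triangle preserving each edge setwise and orientation is trivial), so your $H|_T$ is forced, and it necessarily carries the $\sigma$-distinguished point of each edge to the $\tau$-distinguished point; there is no residual freedom to ``normalize.'' Now take two faces $T,T'$ sharing an edge $e$ and parametrize $e$ by arclength. The distinguished points contributed by $T$ and by $T'$ are offset along $e$ by the shift parameter, which equals $\alpha^\sigma$ in the domain metric and $\alpha^\tau$ in the target metric. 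Hence $H|_T$ and $H|_{T'}$ restrict to $e$ as two orientation-preserving isometries of $\R$ that differ by the translation $\alpha^\tau-\alpha^\sigma$. They agree only when every shift parameter of $\sigma$ equals that of $\tau$, i.e.\ only when $\sigma=\tau$ in Teichm\"uller space. For distinct metrics your glued $H$ is not even well defined, let alone continuous. Your energy discussion at the cusps is moot for isometric face maps (energy density $2$, area $\pi$ per face), and the ``delicate step'' you flag at the vertices cannot be resolved within your framework: no choice of cusp coordinates makes isometric face maps compatible across edges.

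The missing idea is that $H|_T$ cannot be an isometry of the whole face; one must sacrifice isometry in a compact core and interpolate. The paper's proof uses the \emph{vertex} local models (whose existence encodes completeness) to define $H$ as ``the identity in coordinates'' only on horoball neighborhoods $N_\sigma(v)$, where the developments of all faces around $v$ for $\sigma$ and $\tau$ are simultaneously consistent; it then chooses arbitrary compatible smooth diffeomorphisms on the compact middle segments of the edges, and fills in the compact core of each face by solving a Dirichlet problem onto a convex set with the prescribed boundary values, invoking Proposition 1 of \cite{JS} to conclude the harmonic filling is a diffeomorphism, and finally smooths the map across the interface using \cite{IKO} while controlling the energy. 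Without some such interpolation mechanism --- a way to produce a finite-energy diffeomorphism of the face whose boundary values are the mutually incompatible (from your point of view) edge and cusp data --- your argument does not prove the theorem.
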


In Section~\ref{SExist} we construct an energy minimizing map. We first describe a harmonic replacement scheme necessary to achieve sufficient continuity for our main minimization argument. We find minimizing maps by a procedure similar to the construction of \cite{KS1} of the solution to the Dirichlet problem. We construct in this section two maps, one minimizing among all maps that respect the simplicial structure of $X$, and the other among all maps respecting the simplicial structure and whose restrictions to each face is a diffeomorphism, following the strategy of \cite{JS} in the classical setting.

\begin{theorem*}[c.f. Theorem~\ref{harmonic}]
    There exist energy minimizing mappings $u\in W^{1,2}$ and $u_D\in\overline{\mathcal{D}}$. That is,
    \[
    E(u) = \inf_{v\in W^{1,2}}E(v),
    \]
    and
    \[
    E(u_D) = \inf_{v\in\mathcal{D}}E(v).
    \]
    Moreover both $u$ and $u_D$ are homotopic to the function $H$ constructed in Theorem~\ref{finite energy}.
\end{theorem*}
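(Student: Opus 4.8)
The plan is to obtain both minimizers by the direct method of the calculus of variations, drawing compactness from the nonpositive curvature of the target $(\xs,\tau)$ in the manner of Korevaar--Schoen \cite{KS1}. Throughout, Theorem~\ref{finite energy} supplies the competitor $H\in\mathcal{D}\subset W^{1,2}$, so both infima are finite: writing $m_W=\inf_{v\in W^{1,2}}E(v)$ and $m_D=\inf_{v\in\mathcal{D}}E(v)$, we have $0\le m_W\le m_D\le E(H)<\infty$, where minimization is understood within the homotopy class of $H$ (every map in $\mathcal{D}$ is homotopic to $H$, since each respects the simplicial structure and restricts to a diffeomorphism on every face).

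First I would construct $u$. Choose a minimizing sequence $u_n$ with $E(u_n)\to m_W$ and apply the harmonic replacement scheme of this section to each $u_n$ on the faces of $X$; this does not increase energy, so the modified sequence still minimizes, and it gains a uniform interior modulus of continuity. The engine is the pointwise convexity of energy in a nonpositively curved target: for the geodesic midpoint map $w_{nm}$ of $u_n$ and $u_m$ one has
\[
E(w_{nm})+\tfrac14\int_{\xs}\abs{\nabla d_{nm}}^2\,dV_\sigma\le\tfrac12 E(u_n)+\tfrac12 E(u_m),
\]
where $d_{nm}(x)=d_\tau\!\left(u_n(x),u_m(x)\right)$. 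Since $w_{nm}$ is again homotopic to $H$, we have $E(w_{nm})\ge m_W$, so the right-hand side minus $m_W$ bounds $\int\abs{\nabla d_{nm}}^2$, which tends to $0$. A Poincar\'e-type inequality---valid here because the fixed homotopy class and the geometry of the cusps prevent a global drift---then forces $d_{nm}\to 0$ in $L^2$, making $\{u_n\}$ Cauchy; combined with the equicontinuity from harmonic replacement and Arzel\`a--Ascoli, a subsequence converges locally uniformly and strongly in $W^{1,2}$ to a continuous limit $u$. Lower semicontinuity of $E$ gives $E(u)\le\liminf E(u_n)=m_W$, hence $E(u)=m_W$.

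For $u_D$ I would run the identical argument with a minimizing sequence $u_n\in\mathcal{D}$, the only new point being that the geodesic midpoint $w_{nm}$ of two face-diffeomorphisms need not itself be a face-diffeomorphism. The remedy, following \cite{JS}, is to take $\overline{\mathcal{D}}$ to be the strong $W^{1,2}$-closure of $\mathcal{D}$ and to verify that $\overline{\mathcal{D}}$ is preserved under geodesic interpolation, so that $w_{nm}\in\overline{\mathcal{D}}$ and $E(w_{nm})\ge m_D$. Because energy is continuous along strongly convergent sequences in $\overline{\mathcal{D}}$, the infima over $\mathcal{D}$ and over $\overline{\mathcal{D}}$ coincide; the convexity estimate above then yields a strongly convergent minimizing subsequence whose limit $u_D\in\overline{\mathcal{D}}$ satisfies $E(u_D)=m_D$. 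Finally, since each $u_n$ is homotopic to $H$ and converges locally uniformly, interpolating geodesically in the target between $u_n$ and its limit exhibits a homotopy, so both $u$ and $u_D$ are themselves homotopic to $H$.

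The main obstacles are twofold. The first is the noncompactness of domain and target at the ideal vertices: one must rule out energy or mass escaping into the cusps, both to validate the Poincar\'e inequality underlying the Cauchy argument and to extend the homotopies across neighborhoods of the cusps. Finite energy should confine the maps appropriately, but this requires a quantitative estimate near each ideal vertex. The second, specific to $u_D$, is verifying that the geodesic midpoint of two face-diffeomorphisms lies in $\overline{\mathcal{D}}$---equivalently, that such midpoints are approximable in $W^{1,2}$ by genuine face-diffeomorphisms---which is precisely what makes the convexity mechanism applicable to the constrained problem.
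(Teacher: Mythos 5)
Your overall architecture (harmonic replacement, NPC convexity on faces, Arzel\`a--Ascoli) matches the paper's, but the step that actually produces the limit map is different and, as written, has a genuine gap. You apply the Korevaar--Schoen convexity inequality to the \emph{global} minimizing sequence and then invoke ``a Poincar\'e-type inequality'' to upgrade $\int_{\xs}\abs{\nabla d_{nm}}^2\to0$ to $d_{nm}\to0$ in $L^2$. That implication fails here: the only Poincar\'e inequality available (Theorem~\ref{poincare}) requires the function to have compact support, and in the global problem there is no boundary condition forcing $d_{nm}=d_\tau(u_n,u_m)$ to vanish anywhere --- a function with arbitrarily small Dirichlet energy can still have large $L^2$ norm (it can be nearly constant, and a minimizing sequence can drift, e.g.\ toward the cusps). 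You flag this as an ``obstacle'' but do not resolve it, and it is precisely the crux. The paper confines the convexity-plus-Poincar\'e mechanism to the local Dirichlet problems of Theorems~\ref{dirichletW} and~\ref{dirichletD}, where all competitors agree with a fixed map $G$ outside a compact set, so $d_\tau(u_k,u_\ell)$ is compactly supported and Theorem~\ref{poincare} genuinely applies.

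Globally the paper uses a compactness mechanism you would need to supply. After harmonic replacement, Theorem~\ref{local lip} gives a Lipschitz bound on each piece of a compact exhaustion $X_1\subset X_2\subset\cdots$ that is uniform in $k$ (depending only on the piece and on the initial energy bound), hence equicontinuity; pointwise boundedness is proved by contradiction --- if $v_{k_\ell}(x_0)$ escaped to a puncture, the uniform Lipschitz bound together with the fact that edges map to edges would keep the distance from $v_{k_\ell}(x_0)$ to a fixed edge bounded, which is incompatible with escaping to an ideal vertex. Arzel\`a--Ascoli plus a diagonal argument over the exhaustion then yields local uniform convergence, and lower semicontinuity of $E$ finishes the proof; no global $L^2$-Cauchy property (and no strong $W^{1,2}$ convergence, which you also assert without justification) is needed. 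Your treatment of the $\mathcal{D}$ case inherits the same issue and adds the unverified claim that $\overline{\mathcal{D}}$ is closed under geodesic interpolation; the paper instead takes the minimizing sequence in $\mathcal{D}$ itself (via the approximation of \cite{IKO}), makes the harmonic replacements face-diffeomorphisms using Proposition 1 of \cite{JS}, and places the limit in $\overline{\mathcal{D}}$ as a weak limit of maps in $\mathcal{D}$.
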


In Section~\ref{SProp} we discuss properties of our minimizing maps. In particular, we prove each map is locally Lipschitz continuous (Theorem~\ref{global lip}), proper (Theorem~\ref{proper}), and degree 1 (Theorem~\ref{degree}). We show

\begin{theorem*}[c.f. Theorem~\ref{interior smooth}]
    Let $u\in W^{1,2}$ and $u_D\in\overline{\mathcal{D}}$ be the energy minimizing maps constructed in Theorem~\ref{harmonic}. For any face $T$ of $X$, the restrictions $u\vert_T$ and $u_D\vert_T$ to the interior of $T$ are analytic harmonic maps. Moreover the map $u_D$ is a diffeomorphism on the interior of each face of $X$.
\end{theorem*}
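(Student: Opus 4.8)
The plan is to localize the problem to the interior of a single face, where both metrics are smooth and real-analytic, and then invoke the classical regularity and diffeomorphism theory for harmonic maps between surfaces. First I would observe that the restriction $u|_{T^\circ}$ (and likewise $u_D|_{T^\circ}$) is \emph{locally} energy minimizing: a variation supported in a compact subset of the open face that lowered its energy would lower the total energy, contradicting the minimality of Theorem~\ref{harmonic}. Because both minimizers respect the simplicial structure of $X$ and are homotopic to the map $H$ of Theorem~\ref{finite energy}, the restriction $u|_T$ takes values in the same face $T$ carrying the target metric $\tau$, sending the edges of $(T,\sigma)$ into the edges of $(T,\tau)$. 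Each such edge is a complete geodesic, hence a convex subset of the nonpositively curved complex $(\xs,\tau)$, so the distance to it is convex and is pulled back by the harmonic map $u$ to a subharmonic function on $T^\circ$ (c.f. \cite{KS1}). A zero of this function at an interior point would, by the strong maximum principle, force the image of a neighborhood into a one-dimensional edge, contradicting the homotopy class and the positivity of energy. Hence $u$ maps $T^\circ$ into the open face $(T^\circ,\tau)$, a smooth, real-analytic, negatively curved surface.

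With the image confined to the smooth locus, $u|_{T^\circ}\colon(T^\circ,\sigma)\to(T^\circ,\tau)$ is an energy minimizing, hence weakly harmonic, map between smooth hyperbolic surfaces, solving the semilinear elliptic system $\Delta_\sigma u^\alpha+\sigma^{ij}\Gamma^\alpha_{\beta\gamma}(u)\partial_i u^\beta\partial_j u^\gamma=0$. The local Lipschitz bound of Theorem~\ref{global lip} places $u$ in $W^{1,\infty}$, and standard elliptic bootstrapping for two-dimensional domains (where there is no interior singular set) yields $u\in C^\infty$. Since the hyperbolic metrics, and therefore the coefficients of the system, are real-analytic, Morrey's analyticity theorem upgrades this to real-analyticity of both $u|_{T^\circ}$ and $u_D|_{T^\circ}$, giving the first assertion.

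It remains to prove that $u_D|_{T^\circ}$ is a diffeomorphism, for which I would follow the scheme of Jost--Schoen \cite{JS}. In a conformal coordinate $z$ on $(T^\circ,\sigma)$ the Hopf differential $\phi\,dz^2$ is holomorphic, and the harmonic map equation yields a Bochner-type identity for $\log|\partial u_D|^2$ in which the Jacobian $J=|\partial u_D|^2-|\bar\partial u_D|^2$ enters multiplied by the negative target curvature $K_\tau$ with a favorable sign. As in \cite{JS}, this identity together with the facts that $u_D$ has degree one (Theorem~\ref{degree}) and is a limit of maps that are diffeomorphisms on each face forces $J>0$ throughout $T^\circ$, so $u_D|_{T^\circ}$ is an orientation-preserving local diffeomorphism. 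Being proper (Theorem~\ref{proper}) onto the open face $(T^\circ,\tau)$, it is then a diffeomorphism.

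The hard part will be the behavior near the edges and ideal vertices of the triangle. Unlike the closed-surface setting of \cite{JS}, each face is an \emph{ideal} hyperbolic triangle with non-compact ends, so the subharmonicity estimate that traps the image inside the open face and the maximum-principle argument for the nonvanishing of $J$ must be made uniform up to the boundary; this reduces to the harmonic-map boundary value analysis of Jost \cite{J} and is the delicate technical point. By contrast, promoting these per-face diffeomorphisms to a diffeomorphism of the entire complex would require a local statement for neighborhoods across the edges that remains conjectural, which is precisely why the result is claimed only on the interior of each face.
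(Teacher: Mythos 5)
Your overall strategy matches the paper's: localize to small discs in the interior of a face, deduce harmonicity from minimality with respect to the map's own (Lipschitz) boundary values, invoke Morrey's interior analyticity theorem, and follow Jost--Schoen for the diffeomorphism property. The main divergence is in how the diffeomorphism claim is closed. The paper does not run the Bochner identity on $u_D$ itself; it exploits the structure of the minimizing sequence from Theorem~\ref{harmonic}: after harmonic replacement the approximating maps $v_k$ are harmonic \emph{diffeomorphisms} on $T\cap G^{-1}(X_k')$ (Theorem~\ref{dirichletD}, via Proposition 1 of \cite{JS}), they converge uniformly on compact discs, the limit is therefore a local harmonic diffeomorphism, and degree one (Theorem~\ref{degree}) upgrades this to a global diffeomorphism of the face. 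Your route via the identity for $\log\abs{\partial u_D}^2$ and positivity of the Jacobian is the Schoen--Yau flavor of the same circle of ideas and can be made to work, but note that Theorem~\ref{proper} gives properness of $u_D$ on $\xs$ (non-escape to the punctures), not properness of the restriction $T^\circ\to T^\circ$ as a map of open faces; to finish you still need the local-diffeomorphism-plus-degree-one argument, exactly as the paper does.

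There is one genuine gap. Your step confining $u(T^\circ)$ to the open face addresses a point the paper passes over silently, but the mechanism you cite does not work as stated. The function $x\mapsto d_\tau(u(x),e)$ is indeed subharmonic (a convex function composed with a harmonic map into an NPC space), but subharmonic functions obey a \emph{maximum} principle, not a minimum principle: a nonnegative subharmonic function can vanish at an interior point, or even along a curve, without vanishing identically --- $\abs{\mathrm{Re}\,z}$ is already a counterexample. So ``a zero at an interior point forces the image into the edge by the strong maximum principle'' is not a valid deduction. The conclusion you want is true, but it should come from the harmonic map system itself: in the local model in which the target edge is $\{x=0\}$, the first component satisfies $\Delta f^1+b\cdot\nabla f^1=0$ with $b=-\tfrac{2}{f^2}\nabla f^2$ locally bounded and $f^1\ge 0$, and the Hopf strong maximum principle for this linear equation (applied to $-f^1$, which would attain an interior maximum value $0$) forces $f^1\equiv 0$ on the disc, which is then excluded by degree one and surjectivity. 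With that repair, and with the properness caveat above, your argument goes through and is essentially parallel to the paper's.
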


We also prove the map $u\in W^{1,2}$ enjoys more boundary regularity.

\begin{theorem*}[c.f. Theorem~\ref{analytic}]
    For the minimizing map $u\in W^{1,2}$ and any face $T$ of $X$, the restriction $u\vert_T$ to the closed face is analytic up to the boundary.
\end{theorem*}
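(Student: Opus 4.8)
The plan is to prove analyticity along each open edge of a face; since the ideal vertices lie at infinity and are not part of $\xs$, it suffices to work in a small neighborhood $U$ of an interior point $p$ of an edge $e$, where both $\sigma$ and $\tau$ are real-analytic. First I would fix Fermi coordinates $(s,t)$ for $\tau$ near $u(p)$ in which the target edge $e'$ containing $u(e)$ is $\{t=0\}$, $t$ being signed distance to $e'$; the simplicial constraint $u(e)\subseteq e'$ then reads $t\circ u\equiv 0$ on $e$, while the tangential component restricts on $e$ to a single boundary function $s_0$ common to all faces meeting $e$. Writing $u|_{T_i}=(s_i,t_i)$ for the faces $T_1,\dots,T_k$ adjacent to $e$, Theorem~\ref{interior smooth} gives that each $u|_{T_i}$ is an analytic solution of the harmonic-map system in the interior, and that $t_i$ carries the analytic Dirichlet datum $t_i=0$ on $e$, whereas $s_i$ carries the a priori only Lipschitz datum $s_i=s_0$. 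The whole problem thus reduces to promoting $s_0$ to a real-analytic function: once $s_0$ is analytic, each face carries fully analytic Dirichlet data $(s_0,0)$ along the analytic curve $e$, and Morrey's theorem on analyticity up to an analytic boundary for solutions of analytic elliptic systems yields that each $u|_{T_i}$ is analytic up to $e$.

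To control $s_0$ I would use the first variation of energy over maps respecting the simplicial structure. Admissible variations must preserve $u(e)\subseteq e'$, so on $e$ they move only in the $s$-direction; the resulting Euler--Lagrange boundary condition is the balancing (transmission) relation $\sum_{i=1}^{k}\partial_{n_i}s_i=0$ together with $s_i=s_0$ for all $i$, where $\partial_{n_i}$ is the outward conormal derivative of $T_i$ along $e$ (weighted by the analytic metric density). For a boundary edge ($k=1$) this is exactly the free-boundary relation $\partial_n s=0$; since $e$ and $e'$ are geodesics, the geodesic reflections $R$ of $(\xs,\sigma)$ across $e$ and $R'$ of $(\xs,\tau)$ across $e'$ are isometries, and the reflected extension $x\mapsto R'\!\left(u(R(x))\right)$ glues to $u$ across $e$ (even in $s$, odd in $t$) to produce a weakly harmonic, hence analytic, map, which proves the claim directly in this case.

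For an interior edge of multiplicity $k\ge 2$ I would argue by elliptic regularity of the boundary data. Linearizing the analytic harmonic-map system about the already-analytic interior solution, the Dirichlet-to-Neumann maps $\Lambda_i\colon s_0\mapsto\partial_{n_i}s_i$ attached to each $T_i$ with datum $(s_0,0)$ are analytic elliptic pseudodifferential operators of order one, each with principal symbol a positive multiple of $|\xi|$. The balancing condition then reads $\big(\sum_{i=1}^{k}\Lambda_i\big)s_0=g$, where $g$ gathers the contributions of the analytic data $t_i=0$ and is itself analytic; since $\sum_i\Lambda_i$ is a first-order elliptic operator with analytic symbol, analytic elliptic regularity forces $s_0$ to be real-analytic on $e$, and Morrey's up-to-the-boundary theorem finishes the argument. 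In the special case $k=2$ one may replace this analysis by a cleaner geometric route: the map $u^{\sharp}=R'\circ u\circ R$ is again an energy minimizer agreeing with $u$ on $e$, so uniqueness of minimizers into the nonpositively curved target forces $u^{\sharp}=u$, i.e.\ $u$ is symmetric across $e$, whence the reflection principle again exhibits $u$ as weakly harmonic, hence analytic, across $e$.

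The main obstacle I anticipate is precisely this last step for $k\ge 3$, where no single reflection relates the $k$ faces and one cannot avoid the Dirichlet-to-Neumann balancing. The delicate points are to verify that, near $e$, the genuinely nonlinear harmonic-map system linearizes to an elliptic system with analytic coefficients and that the associated operators $\Lambda_i$ are analytic of the asserted symbol class, and to start the bootstrap: here I would lean on the local Lipschitz bound of Theorem~\ref{global lip} together with the interior analyticity of Theorem~\ref{interior smooth} to place the unknown trace $s_0$ in a function class on which $\sum_i\Lambda_i$ is invertible modulo analytic errors, thereby upgrading $s_0$ from Lipschitz to analytic. The cases $k\le 2$ are by contrast clean, since there the transmission condition is exactly a reflection or weak-harmonicity statement requiring no Dirichlet-to-Neumann analysis at all.
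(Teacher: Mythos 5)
Your overall reduction---treat each edge as a transmission problem with boundary data $t_i=0$, $s_i=s_0$, and the first-variation balancing relation $\sum_i\partial_{n_i}s_i=0$, then invoke analytic regularity up to an analytic boundary---has the same skeleton as the paper's proof, which writes the $2N\times 2N$ nonlinear harmonic-map system for all faces in a local model of the edge and applies Morrey's Theorem 6.8.2 from \cite{M}. But your mechanism for closing the argument when $k\ge 3$ does not work as stated. To assert that each Dirichlet-to-Neumann map $\Lambda_i$ is an elliptic pseudodifferential operator with \emph{analytic} symbol near the edge, you must linearize the system at a solution whose coefficients are analytic up to the edge; a priori the minimizer is only Lipschitz there (Theorem~\ref{global lip}), and the interior analyticity of Theorem~\ref{interior smooth} gives nothing at the boundary. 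So the analytic-hypoellipticity step for $\sum_i\Lambda_i$ presupposes exactly the boundary analyticity you are trying to prove. The paper avoids this circularity by never factoring through a DtN operator: it first upgrades the weak balancing condition to a pointwise one using the $C^{1,\beta}$ regularity of Proposition~\ref{c1beta}, and then verifies directly that the coupled system with boundary conditions $f_j^1=0$, $f_j^2=f_1^2$, $\sum_j\partial_xf_j^2=0$ is properly elliptic and satisfies the Agmon--Douglis--Nirenberg complementing condition (the computation $\det B=N\tau$). Your proposal never checks any Lopatinski-type condition, and without it the analytic boundary regularity theorem cannot be applied.

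The reflection arguments for $k\le 2$ also have a gap. A geodesic reflection across an edge is an isometry of the complex only when the relevant shift parameters vanish: such a reflection would have to fix $e$ pointwise while exchanging the distinguished points on $e$ contributed by the two adjacent faces, which forces those points to coincide. For a generic metric in $\mathcal{T}(X)$ neither $R$ nor $R'$ exists, so the map $u^{\sharp}=R'\circ u\circ R$ is undefined and the identity $u^{\sharp}=u$ cannot be invoked. The $k=2$ case can instead be salvaged by a welding observation---continuity together with the balancing condition makes the map $C^1$ across an interior edge of the smooth locus, hence weakly harmonic and then analytic there---but that is a transmission argument, not a reflection, and the paper's uniform verification of the complementing condition handles all values of $k$ at once.
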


\subsection*{Acknowledgments}
The authors would like to thank both George Daskalopoulos and Athanase Papadopoulos for suggesting the problem and broader context of the problem, as well as many helpful conversations and suggestions. They also thank Chikako Mese for helpful discussions.

\section{Background}\label{SBack}

\subsection{Simplicial Complexes}

\begin{definition}
A \textit{2-dimensional simplicial complex} is a topological space $X$ together with two finite or infinite sets $\mathcal{C}$ and $\mathcal{F}$ that satisfy the following properties:
\begin{enumerate}
	\item Each $T\in\mathcal{C}$ is a topological triangle. 
		That is, a topological space homeomorphic to a closed 2-dimensional closed disc with three distinguished points on the boundary called vertices. 
		The edges of $T$ are the closed segments of $\partial T$ bounded by two vertices and not containing the third.

	\item $\mathcal{F}$ is a maximal collection of homeomorphisms $f:A\to B$ where $A\subset T$ and $B\subset T'$ are distinct edges of (possibly identical) triangles $T,T'\in\mathcal{C}$ (along with the identity map on each edge of each triangle). For two edges $A,B$, there should be at most one map $f:A\to B$ in $\mathcal{F}$. The collection $\mathcal{F}$ is maximal with respect to two conditions. First, if $f:A\to B$ is in $\mathcal{F}$, then so is $f^{-1}:B\to A$. Second, if $f:A\to B$ and $g:B\to C$ are in $\mathcal{F}$, then so is $g\circ f:A\to C$.

		The elements of $\mathcal{F}$ are called gluing maps.

	\item As a topological space, $X$ is the quotient of the disjoint union $\coprod_{\mathcal{C}}T$ of the triangles in $\mathcal{C}$ by the equivalence relation identifying $x\in A$ with $f(x)\in B$ for each $f:A\to B\in\mathcal{F}$.

		Let $\pi:\coprod_{\mathcal{C}}T\to X$ be the quotient map. The fact that each $f\in\mathcal{F}$ is a homeomorphism and no two maps in $\mathcal{F}$ have the same domain and range implies that $\pi$ is injective on each edge of each triangle in $\mathcal{C}$. The image $\pi(T)$ of a triangle $T\in\mathcal{C}$ is called a \textit{face} of $X$, the image $\pi(e)$ of an edge $e$ of $T$ is called an \textit{edge} of $X$, and the image $\pi(v)$ of a vertex $v$ of $T$ is called a \textit{vertex} of $X$.
		
		We will also always impose an orientation on each edge of $X$. Pulling back by $\pi$, this puts an orientation on each edge $e$ of each triangle $T\in\mathcal{C}$ in such a way that the gluing maps $f\in\mathcal{F}$ are orientation-preserving homeomorphisms.

\item $X$ is path connected.

\item $X$ is locally finite. That is, each edge and each vertex is incident to finitely many faces.
\end{enumerate}
\end{definition}

\begin{remark}
A simplicial complex is said to be \textit{(locally) 1-chainable} if the complement of the vertices, $\xs$, is (locally) connected. By the construction of our complexes, since faces are glued to other faces only along edges (and not at vertices alone), our complexes will satisfy this condition.
\end{remark}

\begin{definition}
A 2-dimensional simplicial complex is \textit{finite} if the number of faces is finite. 
\end{definition}

We will always assume that our complexes are finite.

\begin{definition}
Let $X$ be a 2-dimensional simplicial complex. 
An edge $e$ of $X$ is called \textit{singular} if $e$ is incident to at least three faces. That is, the preimage of $e$ by the quotient map $\pi$ consists of three or more distinct edges.
\end{definition}

\begin{figure}[ht]
\centering
\includegraphics[width=0.5\textwidth]{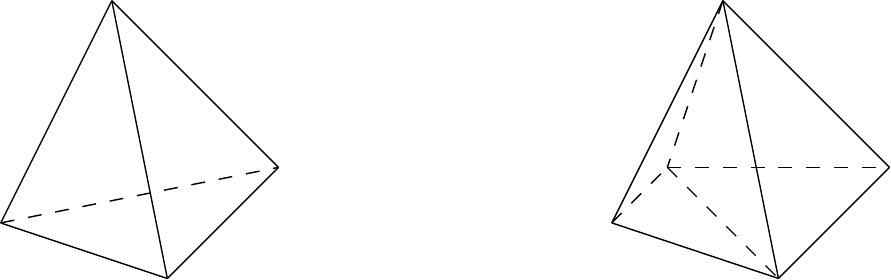}
\caption{2-dimensional simplicial complexes with (right) and without (left) singular edges}
\end{figure}

\subsection{Metrics}\label{Smetrics}

For a 2-dimensional simplicial complex $X$ with a set $S$ of vertices, we want to endow $\xs$ with an ideal hyperbolic metric. This discussion is based on \cite{CP}, though our treatment is slightly altered at some points. There is also a discussion of more general metrics in \cite{CP} of metrics, but we will restrict our attention here.

We will take the upper half-plane model of $\H^2$, that is $\H^2 = \{z\in\C\mid Im(z)>0\} = \{(x,y)\in\R^2\mid y>0\}$ with metric
\[
ds^2 = \frac{dx^2+dy^2}{y^2}.
\]
We will use the real coordinates $(x,y)$ and the complex coordinate $z=x+iy$ almost interchangeably.
We will always view an ideal hyperbolic triangle as the convex hull in $\H^2$ of points $0$, $r\in\R_{>0}$, and $\infty$. Our standard triangle is the convex hull of $0$, $1$, and $\infty$, and will be denoted $\tilde{T}$. Any other triangle is related to $\tilde{T}$ by scaling, i.e. $r\tilde{T}$ is the convex hull of $0$, $r$, and $\infty$.

For each triangle $T\in\mathcal{C}$, choose a diffeomorphism $\phi:T\backslash S\to\tilde{T}$. Use this $\phi$ to give coordinates on $T$, as well as the metric $\phi^*ds^2$. This turns $T\backslash S$ into an isometric copy of the ideal hyperbolic triangle. Henceforth any isometry $\phi:T\backslash S\to r\tilde{T}$ can be used to give coordinates on $T$.

Now each edge $e$ of each triangle $T$ is isometric to the real line $\R$. If there is a gluing map $f:e\to e'$, we require that this map be an (orientation preserving) isometry. The collection of isometries $\{\phi:T\to\tilde{T}\mid T\in\mathcal{C}\}$ together with the isometric gluing maps induces a Riemannian metric in each face of $\xs$, which in turn induces a distance metric on $\xs$ in the usual way, via measuring lengths of paths using the Riemannian metrics in faces.

\begin{definition}
An \textit{ideal hyperbolic structure} or \textit{ideal hyperbolic metric} on $\xs$ is the data of a collection of isometries $\{\phi:T\to\tilde{T}\mid T\in\mathcal{C}\}$ along with isometric gluing maps $f\in\mathcal{F}$, or equivalently the distance metric on $\xs$ that they induce.
\end{definition}

For any triangle $T\in\mathcal{C}$ and any edge $e$ incident to $T$, there is a preferred isometry $\phi:T\to\tilde{T}$ that maps the edge $e$ to the line joining $0$ and $\infty$ in $\H^2$ in an orientation-preserving way (where the line joining $0$ and $\infty$ is oriented towards $\infty$). If we want to view all the faces incident to a common edge of $X$ at the same time, we use the following model.

\begin{definition}[Local model of an edge]
Let $e\in X$ be an edge, and let $\{e_1,\ldots,e_n\}$ enumerate the preimages of $e$ under the quotient map $\pi$. Each edge $e_j$ is an edge of a triangle $T_j\in\mathcal{C}$. For each triangle $T_j$, choose an isometry $\phi_j:T_j\to r_j\tilde{T}$ so that
\begin{enumerate}
	\item $\phi_j$ maps $e_j$ onto the geodesic joining $0$ and $\infty$.
	\item For $f:e_j\to e_k\in\mathcal{F}$, $\phi_j\vert_{e_j} = \phi_k\circ f$.
	\item $\max_jr_j = 1$.
\end{enumerate}
Call the collection of isometries $\{\phi_j:T_f\to r_j\tilde{T}\}$ a \textit{local model for $e$}.
\end{definition}

\begin{remark}
The maps $\phi_j$ constituting a local model for an edge depend on the metric of the complex $X$. Moreover, for a fixed metric, there are two possible local models for a given edge, corresponding to whether $e$ is identified with the geodesic joining $0$ and $\infty$ in an orientation-preserving or orientation-reversing way.
\end{remark}

\subsubsection{Shift Parameters}

For the ideal hyperbolic triangle $\tilde{T}$, there is a distinguished point, called the center of the triangle. It can be obtained as the unique fixed point of the full isometry group $Isom(\tilde{T})\cong S_3$, or as the intersection of the three altitudes of the triangle (see Figure~\ref{distinguished}). For the triangle $\tilde{T}$, the center is at $\frac{1+\sqrt{3}i}{2}$.

There are also distinguished points on each edge of $\tilde{T}$. These points are the feet of the altitudes that define the center. For the triangle $\tilde{T}$ these points are $i$, $1+i$, and $\frac{1+i}{2}$.

\begin{figure}[ht]
\centering
\includegraphics[width=0.25\textwidth]{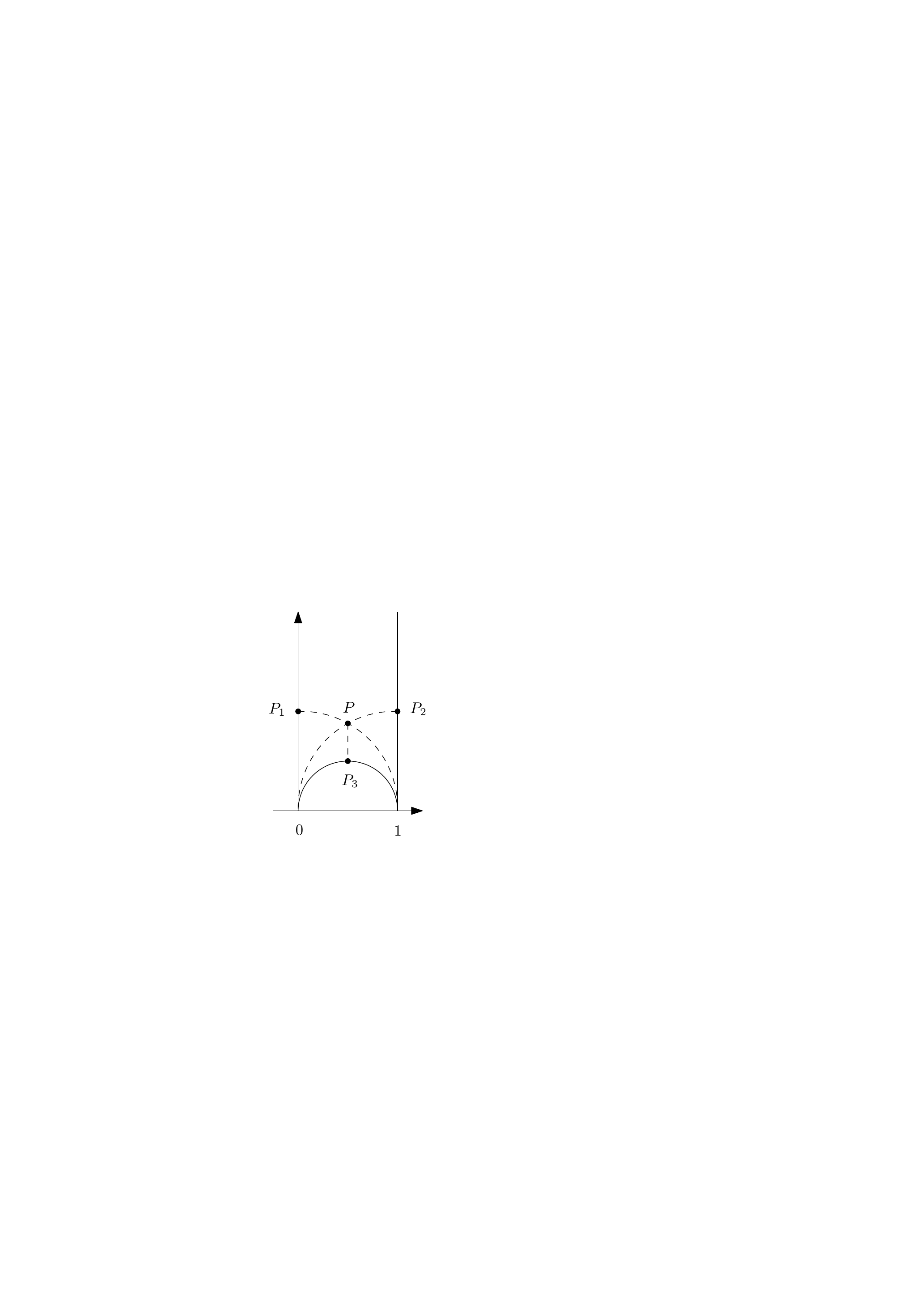}
\caption{Distinguished points.}\label{distinguished}
\end{figure}

Let $\sigma$ be an ideal hyperbolic structure on $\xs$. Let $e$ be an edge of $X$ and let $\{\phi_j:T_j\to r_j\tilde{T}\}$ be a local model for $e$. For two triangles $T_j,T_k$ incident to $e$, the ratio between $r_j$ and $r_k$ measures the distance along $e$ between the distinguished points on $e$ coming from the two triangles, and parametrizes the ways in which the two triangles can be glued.

\begin{definition}
The \textit{shift parameter} associated to a gluing map $f:e_j\to e_k \in \mathcal{F}$ is the real number $\alpha = \alpha_{k,j,e} = \log\left(\frac{r_j}{r_k}\right)$, where $r_j$ and $r_k$ are given by the local model for the edge $e$.
\end{definition}

Before continuing on, let us derive some properties of the shift parameters.

\begin{proposition}\label{shift prop}
Let $X$ be a 2-dimensional simplicial complex and let $\sigma$ be a hyperbolic structure on $\xs$.
Let $T_1, T_2, T_3$ be three faces sharing an edge $e$ in $X$, and let $e_i = \pi^{-1}(e)\cap T_i$ be the edge of $T_i$ corresponding to $e$ for $i=1, 2, 3$. Let $f_{k,j}:e_j\to e_k\in\mathcal{F}$ be the gluing maps for these three edges and let $\alpha_{k,j}$ be the shift parameter of $f_{k,j}$.
Then
\[
\alpha_{1,2}=-\alpha_{2,1}, \qquad \alpha_{1,2}+\alpha_{2,3}+\alpha_{3,1}=0,
\]
\end{proposition}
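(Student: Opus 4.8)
The plan is to reduce both identities to elementary properties of the logarithm, once a \emph{single} local model has been fixed for the edge $e$. The key structural observation is that in the definition of the shift parameter the radii $r_j$ are the data of one local model for $e$, and a local model assigns a radius to \emph{every} face incident to $e$ simultaneously. So I would first fix a local model $\{\phi_j\colon T_j\to r_j\tilde T\}$ for $e$, assigning radii $r_1,r_2,r_3$ to $T_1,T_2,T_3$ all at once. In this model each incident face $T_j$ places its distinguished point (the foot of its altitude) at height $r_j$ on the common geodesic, so that $\log r_j$ is precisely the signed position of that point along the oriented edge $e$. The definition then reads
\[
\alpha_{k,j} = \log\!\left(\frac{r_j}{r_k}\right) = \log r_j - \log r_k
\]
for every gluing map $f_{k,j}\colon e_j\to e_k$ among the three faces; in other words, the shift parameter is the coboundary of the function $j\mapsto\log r_j$, equivalently the signed hyperbolic distance from the distinguished point of $T_k$ to that of $T_j$.

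From this presentation both identities are immediate. The antisymmetry is the statement that a coboundary changes sign under transposition,
\[
\alpha_{1,2} = \log r_2 - \log r_1 = -(\log r_1 - \log r_2) = -\alpha_{2,1},
\]
and the triangle relation is a telescoping sum,
\[
\alpha_{1,2}+\alpha_{2,3}+\alpha_{3,1} = (\log r_2-\log r_1)+(\log r_3-\log r_2)+(\log r_1-\log r_3) = 0,
\]
which one may also read off from $\frac{r_2}{r_1}\cdot\frac{r_3}{r_2}\cdot\frac{r_1}{r_3}=1$.

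The one point that genuinely requires justification, and which I expect to be the crux, is that a single local model really does compute all three shift parameters at once. A local model must satisfy the compatibility condition $\phi_j|_{e_j}=\phi_k\circ f_{k,j}$ for \emph{every} gluing map among the incident edges, and these conditions overdetermine the isometries $\phi_j$. Fixing $\phi_1$ and propagating around the cycle $e_1\to e_3\to e_2\to e_1$ via $f_{3,1}$, $f_{2,3}$, $f_{1,2}$ forces $\phi_1 = \phi_1\circ(f_{1,2}\circ f_{2,3}\circ f_{3,1})$ on $e_1$; since $\phi_1$ is injective, consistency is equivalent to $f_{1,2}\circ f_{2,3}\circ f_{3,1}=\mathrm{id}_{e_1}$. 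This is exactly what the groupoid structure of $\mathcal F$ guarantees: a composition of gluing maps is again a gluing map, there is at most one gluing map between any ordered pair of edges, and the identity is included, so the only self-map of $e_1$ in $\mathcal F$ is $\mathrm{id}_{e_1}$. Hence a consistent local model exists across all three faces and the computation above is valid. Finally, since both identities are invariant under multiplying all the $r_j$ by a common scale and under the global sign flip induced by passing to the orientation-reversing local model, the conclusion is independent of the normalization $\max_j r_j=1$ and of the choice of local model.
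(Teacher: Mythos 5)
Your proof is correct and follows the same route as the paper, which simply notes that both identities follow from expanding the definition of the shift parameters in a fixed local model and using properties of the logarithm. Your additional verification that a single local model consistently assigns radii to all three incident faces is a reasonable elaboration of a point the paper leaves implicit, but it does not change the substance of the argument.
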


\begin{proof}
Both of these statements follow from expanding the definition of shift parameters and properties of the logarithm.
\end{proof}

We may now count the dimension of the space of ideal hyperbolic metrics on $\xs$. Fix an edge $e$ of $X$, and let $T_1,\ldots,T_k$ enumerate the faces of $X$ incident to $e$ (with multiplicity). If we specify the shift parameters $\alpha_{j,1,e}$ for $j=2,\ldots,k$, then we can recover $\alpha_{i,j,e}$ for any $1\le i,j\le k$ by Proposition~\ref{shift prop}. Moreover these $k-1$ shift parameters are independent.

So for each edge $e$ we have $deg(e)-1$ independent shift parameters, where $deg(e)$ denotes the number of faces incident to $e$. So in total we have
\[
\sum_e deg(e)-1 = 3F-E
\]
shift parameters. Here $F = \abs{\mathcal{C}}$ denotes the number of faces of $X$, while $E = \abs{\mathcal{E}}$ is the number of edges. While $\sum_e-1=-E$ is clear, $\sum_edeg(e)=3F$ follows since the left hand side is counting pairs $(e,T)$ where $T$ is a face of $X$ and $e$ is an edge incident to $T$. Since each triangle has three edges, this sum is $3F$. But not all of these metrics are complete!

\subsubsection{Completeness}\label{Scompleteness}

If $X$ has no singular edges, then $\xs$ is a surface with punctures. It is well known that a hyperbolic metric on a finite type Riemann surface with punctures is complete if and only if the metric at each puncture is that of a cusp. Thus, it seems natural that a metric $\sigma$ on $X$ will be complete if and only if all the vertices are cusps. In fact this is the content of Proposition 3.2 of \cite{CP}.

A hyperbolic cusp can be constructed as the quotient of $\{z\in\H^2 \mid Im(z)>1\}$ by the group of translations generated by $z\mapsto z+1$. The horizontal curves $t\mapsto z_0+t$ are horocycles centered at $\infty$, and after quotienting by translation they become closed curves around the cusp. Thus a hyperbolic puncture is a cusp if and only if it has a neighborhood foliated by closed horocycles. For more details see \cite{T}, for instance Proposition 3.4.18.

In order to describe this condition for completeness on the complex $X$, we first need to define the link of a vertex. The link of a vertex $v$ of $X$, denoted $\Gamma(v)$, is a graph whose vertices correspond to edges of $X$ incident to $v$, and whose edges correspond to faces of $X$ incident to $v$. Two vertices of $X$ are joined by an edge if the corresponding edges of $X$ cobound the corresponding face. It can also be seen that a neighborhood of $v$ in $X$ is homeomorphic to the cone on $\Gamma(v)$, the set $(\Gamma(v)\times[0,1])/(\Gamma(v)\times\{0\})$. See for instance Figure~\ref{link}.

\begin{figure}[ht]
\centering
\includegraphics[width=0.4\textwidth]{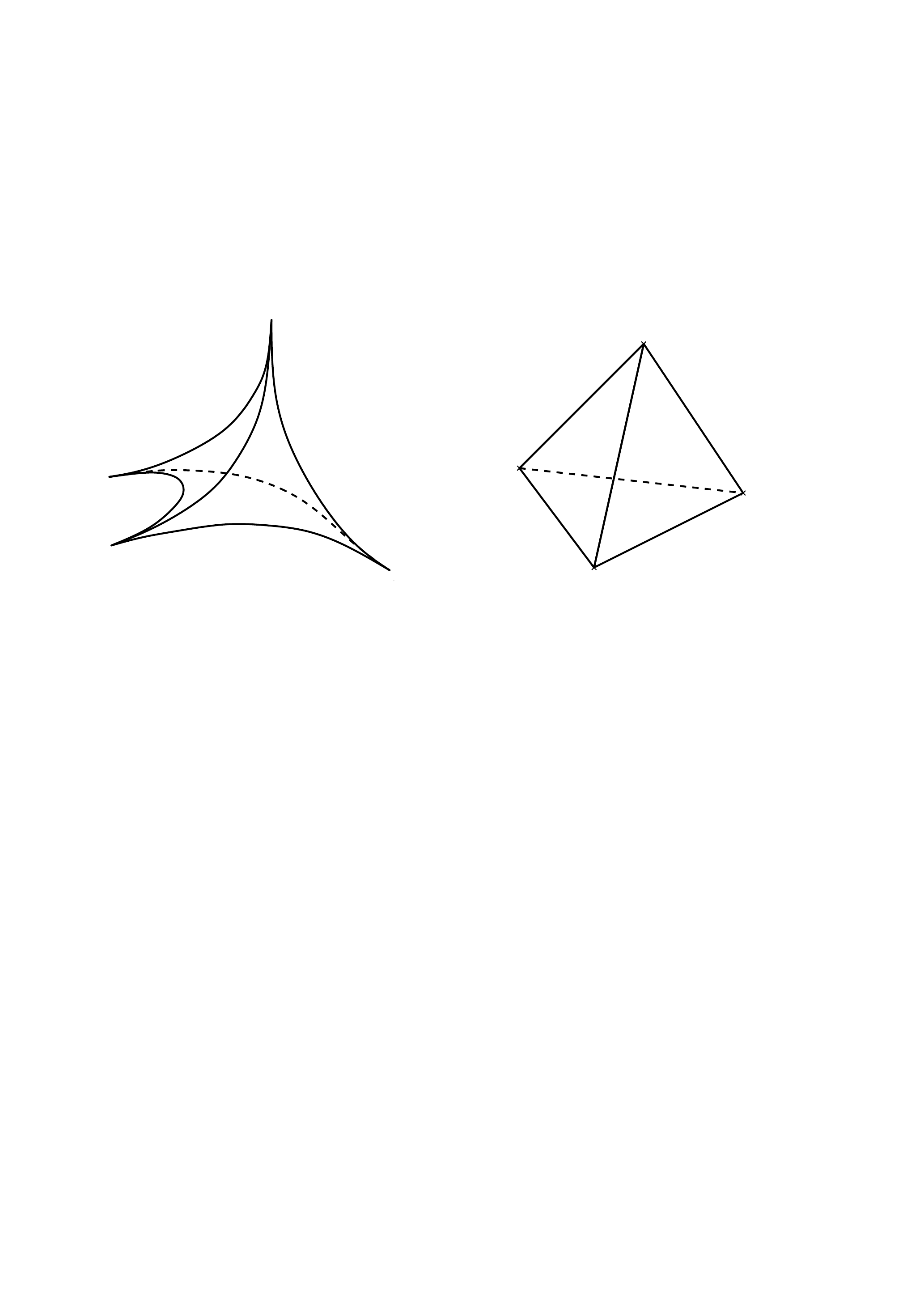}
\caption{2-dimensional simplicial complexes with complete (left) and incomplete (right) metrics}
\end{figure}

\begin{figure}[ht]
\centering
\includegraphics[width=0.5\textwidth]{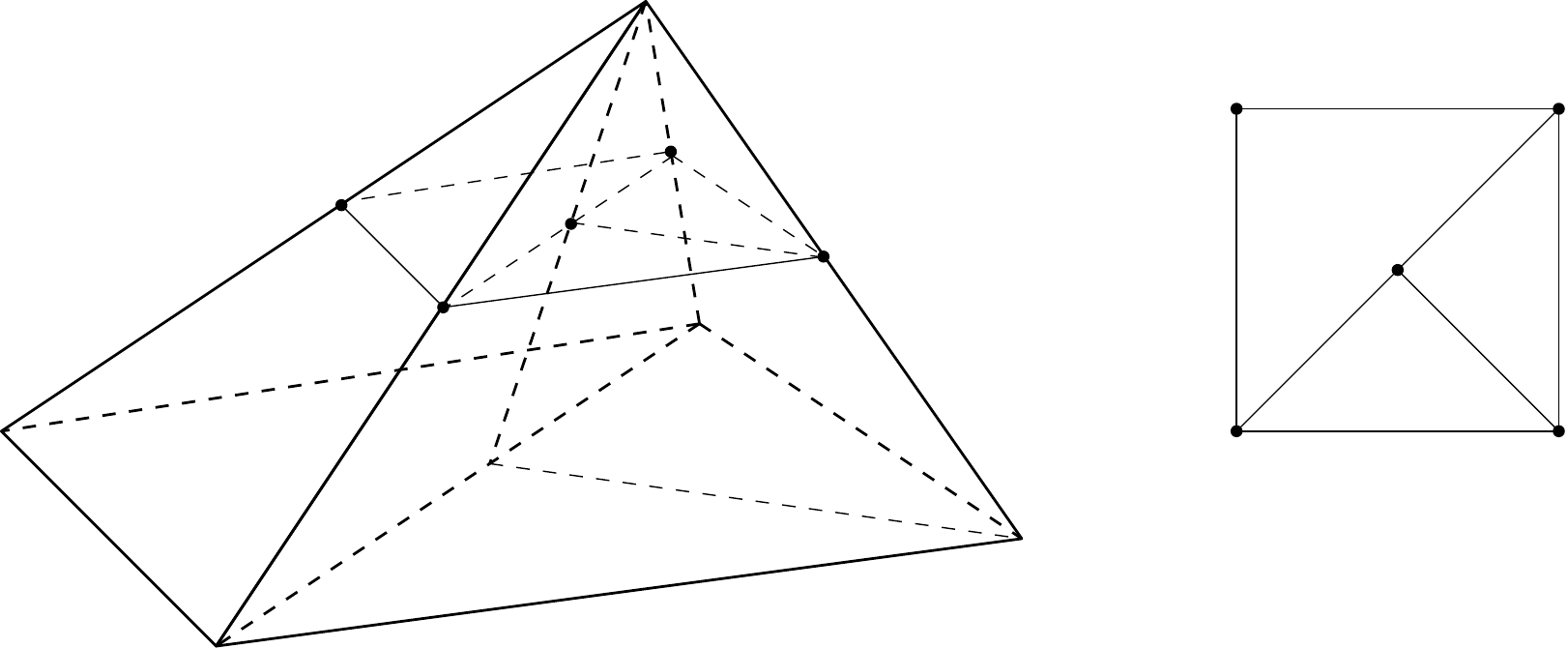}
\caption{Link of a vertex}\label{link}
\end{figure}

To each vertex $z$ in $\Gamma(v)$ we may associate a sign $\epsilon_z = \pm1$, where $\epsilon_z = 1$ if the edge $e$ of $X$ corresponding to $z$ is oriented towards $v$, and $\epsilon_z=-1$ if $e$ is oriented away from $v$. It is possible that an edge of $X$ will have $v$ at each end. In this case there will be two vertices in $\Gamma(v)$ corresponding to the edge, and $\epsilon$ will take value $1$ at one of these vertices and $-1$ at the other.

Let $L$ be a simple closed curve in the graph $\Gamma(v)$ and let us put an orientation on it. The curve $L$ visits a sequence of vertices and edges in $\Gamma(v)$. Say this sequence is $(z_1,f_1,z_2,f_2,\ldots,z_\ell,f_\ell,z_{\ell+1}=z_1)$, where each $z_j$ is a vertex in $\Gamma(v)$ corresponding to an edge $e_j$ in $X$, $f_j$ is an edge in $\Gamma(v)$ corresponding to a face $T_j$ in $X$, and the endpoints of $f_j$ are $z_j$ and $z_{j+1}$. We develop the faces $T_j$ into $\H^2$ as follows.

Let $\phi_1:T_1\to\tilde{T}$ be an isometry so that $\phi_1(e_1)$ is the geodesic joining $0$ and $\infty$ and $\phi_1(e_2)$ is the geodesic joining $1$ and $\infty$. Inductively choose $\phi_j:T_j\to r_j\tilde{T}$ so that $\phi_j\vert_{e_j} = \phi_{j-1}\vert_{e_j}$ and $\phi_j(e_{j+1})$ is to the right of $\phi_j(e_j)$, for $j=2,\ldots,\ell$. The completeness of the metric restricted to the faces $T_1,\ldots,T_\ell$ is equivalent to the horocycles about $\infty$ in the developed picture being closed, which in turn means that $\phi_1(e_1)$ must be glued to $\phi_\ell(e_{\ell+1})$ by a pure real (horizontal) translation. This property is captured by the equation
\[
\sum_{j=1}^\ell \epsilon_j\alpha_{j+1,j} = 0.
\]
Here $\alpha_{j+1,j}$ is the shift parameter associated to the gluing map that glues the triangle $T_j$ to the triangle $T_{j+1}$ along the edge $e_{j+1}$. For more details and a different perspective, see \cite{CP}.

\begin{definition}
Let $X$ be a finite 2-dimensional simplicial complex. The \textit{Teichm\"uller space} of $X$, denoted $\mathcal{T}(X)$, is the space of all complete ideal hyperbolic metrics on $\xs$.
\end{definition}

There is one such relation for each simple closed curve in the link of each vertex. In the link $\Gamma(v)$ of $v$, there are many simple closed curves. The fundamental group $\pi_1(\Gamma(v))$ is free, as $\Gamma(v)$ is a graph, and has rank $\#E(\Gamma(v)) - \#V(\Gamma(v))+1$, where $\#E$ and $\#V$ are the number of edges and vertices, respectively, in $\Gamma(v)$. Thus we have $\#E(\Gamma(v))-\#V(\Gamma(v))+1$ independent relations at each vertex. In total we have
\[
\sum_v\#E(\Gamma(v))-\#V(\Gamma(v))+1 = 3F-2E+V
\]
relations. Imposing these relations on the $3F-E$-dimensional space of hyperbolic metrics, we see that the space $\mathcal{T} = \mathcal{T}(X)$ of \textit{complete} ideal hyperbolic metrics on $X$ has dimension $E-V$.

For a complete metric, we can develop all of the faces incident to a given vertex onto the hyperbolic plane simultaneously and consistently.

\begin{definition}[Local model of a vertex]
Let $v\in X$ be a vertex, and let $\{T_1,\ldots,T_n\}$ enumerate the faces of $X$ incident to $v$ (with repetition if $v$ occurs as the vertex of a face more than once). For each face $T_j$, choose an isometry $\phi_j:T_j\to r_j\tilde{T}$ so that
\begin{enumerate}
	\item $\phi_j$ maps the vertex $v$ to the point at $\infty$.
	\item For $f:e_j\to e_k\in\mathcal{F}$, $\phi_j\vert_{T_j\cap T_k} = \phi_k\vert_{t_j\cap T_k}+t_k$, for some real (horizontal) translation $t_k$.
	\item $\max_jr_j = 1$.
\end{enumerate}
Call the collection of isometries $\{\phi_j\}$ a \textit{local model for $v$}.
\end{definition}

\subsubsection{Curvature}

Here we will derive some curvature properties of the ideal hyperbolic metrics we have defined on $\xs$. First of all, let us recall some definitions.

\begin{definition}
A complete metric space $(Y, d)$ is said to have \textit{curvature bounded from above by} $\kappa$ if the following conditions are satisfied:
\begin{enumerate}[(i)]
	\item $(Y, d)$ is a length space. 
	That is, for any two points $P, Q\in Y$, the distance $d(P,Q)$, which for simplicity we will also write as $d_{PQ}$, is realized as the length of a rectifiable curve $\gamma_{PQ}$ connecting $P$ to $Q$.
	We call such a curve a geodesic.
	\item Let $a=\sqrt{\abs{\kappa}}$. 
	Let $P, Q, R\in Y$ (assume $d_{PQ}+d_{QR}+d_{RP}<\frac{\pi}{\sqrt{\kappa}}$ for $\kappa>0$) with $Q_t$ defined to be the point on the geodesic $\gamma_{QR}$ satisfying $d_{QQ_t}=td_{QR}$ and $d_{Q_tR}=(1-t)d_{QR}$ Then we have
	\[
	\cosh(ad_{PQ_t})\leq \dfrac{\sinh((1-t)ad_{QR})}{\sinh(ad_{PQ})}\cosh(ad_{PQ})+\dfrac{\sinh(tad_{QR})}{\sinh(ad_{QR})}\cosh(ad_{PR})
	\]
	for $\kappa<0$,
	\[
	d^2_{PQ_t}\leq (1-t)d^2_{PQ}+td^2_{PR}-t(1-t)d^2_{QR}.
	\]
	for $\kappa=0$, and
	\[
	\cos(ad_{PQ_t})\geq \dfrac{\sin((1-t)ad_{QR})}{\sin(ad_{QR})}\cos(ad_{PQ})+\dfrac{\sin(tad_{QR})}{\sin(ad_{QR})}\cos(ad_{PR})
	\]
	for $\kappa>0$.
\end{enumerate}
\end{definition}

If $Y$ has curvature bounded from above by 0, we also say that this space is \textit{non-positively curved} (NPC).
If $Y$ has curvature bounded from above by $\kappa\leq 0$, we also say that $Y$ is a CAT($\kappa$) space.

As a consequence of (ii), geodesics in an NPC space are unique, see for instance \cite{KS1}. 
It is also a well-known fact that if $Y$ is locally compact and NPC, then $Y$ is simply connected. 
Conversely, if $Y$ is a complete simply connected Riemannian manifold with non-positive sectional curvature, then $Y$ is an NPC space with the length metric induced from the Riemannian metric.

Furthermore, it is known that any CAT($\kappa$) space is also a CAT($\ell$) space for $\ell\geq\kappa$.
In particular, CAT(-1) spaces are NPC.

\begin{proposition}
	Let $\sigma$ be a complete ideal hyperbolic metric on $\xs$ as defined in Section~\ref{Smetrics}.
	Then $(\xs,\sigma)$ is a local CAT(-1) space.
\end{proposition}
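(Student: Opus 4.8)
The claim is that $(\xs,\sigma)$ is \emph{local} CAT($-1$). The plan is to prove this by a covering argument: exhibit, around each point of $\xs$, a small neighborhood that is CAT($-1$), since a local statement only requires the comparison inequalities to hold for sufficiently small triangles. The three types of points to handle are (a) points in the interior of a face, (b) points in the relative interior of an edge, and (c) points near (but not equal to) a vertex. The key structural fact is that the ideal hyperbolic metric makes each open face isometric to an open subset of $\H^2$, which is CAT($-1$), so the only real content is at the edges where several faces meet.

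\medskip

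\noindent\textbf{Interior of a face.} First I would dispatch case (a): if $P$ lies in the interior of a face $T$, then a small enough metric ball around $P$ is entirely contained in the interior of $T$, hence isometric to a convex ball in $\H^2$. Since $\H^2$ is CAT($-1$) and convexity guarantees geodesics stay inside the ball, the comparison inequality for $\kappa=-1$ holds verbatim. This case is essentially immediate from the definition of the metric in Section~\ref{Smetrics}.

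\medskip

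\noindent\textbf{Interior of an edge.} The main work is case (b). Let $P$ lie in the relative interior of an edge $e$, and let $\{\phi_j:T_j\to r_j\tilde T\}$ be the local model for $e$ from the Definition of the local model of an edge. The picture is that of $n\ge 2$ hyperbolic half-planes (the developed triangles) all sharing the common geodesic that is the image of $e$, glued along it by isometries. I would argue that a small ball $B$ around $P$ is a CAT($-1$) space by a gluing/reflection argument: the standard Reshetnyak gluing theorem states that gluing two CAT($\kappa$) spaces along a complete convex subset (here, a geodesic segment on $e$) yields a CAT($\kappa$) space, and it extends to gluing finitely many half-disks along a common geodesic. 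Concretely, for two points $Q,R$ in the ball, either they lie in a common face (reduce to $\H^2$), or the geodesic $\gamma_{QR}$ crosses $e$; in the latter case I would compare the broken geodesic through the edge to a comparison triangle in $\H^2$ and verify the inequality using that each face is isometric to a piece of $\H^2$ and that geodesics crossing the edge do so by the shortest-path (angle) condition. The essential point is that near an interior edge point the space is a finite ``book'' of hyperbolic half-planes along a spine, and such books are CAT($-1$) by Reshetnyak gluing.

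\medskip

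\noindent\textbf{Near a vertex.} For case (c), since we remove the vertices $S$, a point $P\in\xs$ near a vertex $v$ still has a neighborhood avoiding $v$ itself; using the local model of the vertex (available since $\sigma$ is complete) the faces incident to $v$ develop consistently into $\H^2$ with $v$ at $\infty$, and a small ball around $P$ again looks like a finite book of half-planes, reducing to case (b). \emph{The main obstacle} I anticipate is case (b): verifying the CAT($-1$) inequality cleanly for triangles whose geodesics cross a singular edge incident to three or more faces, where one must check that the \emph{shortest} path between two points on different sheets realizes the comparison and that no shortcut through a third sheet violates the bound. This is exactly where the nonsmoothness concentrates, and where invoking the gluing theorem (rather than a direct comparison computation) is the clean route; I would make precise that the relevant convex gluing locus is the geodesic segment of $e$ inside the ball, so that Reshetnyak's theorem applies and yields the local CAT($-1$) conclusion.
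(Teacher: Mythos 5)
Your argument is correct and is essentially the approach the paper takes: the paper defers this particular proposition to \cite{CP}, Proposition 1.4, but the identical strategy --- each face is isometric to a CAT($-1$) piece of $\H^2$, and finitely many CAT($-1$) spaces glued along a convex geodesic subset remain CAT($-1$) by (iterated) Reshetnyak gluing, citing \cite{Sur}, Chapter 10 --- is exactly what the paper uses to prove the strengthened Proposition~\ref{cat-1} immediately afterwards. Your case analysis is sound; note only that your case (c) is vacuous, since after deleting $S$ every point of $\xs$ is already of type (a) or (b), so no appeal to completeness or to the local model of a vertex is needed.
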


The proof of this result can be found in \cite{CP}, Proposition 1.4.
We can actually improve this result as follows:

\begin{proposition}\label{cat-1}
	Let $\sigma$ be a complete ideal hyperbolic metric on $X$. Let $e$ be an edge of $X$, and let $\{\phi_j:T_j\to r_j\tilde{T}\}$ be a local model for $e$. Then the union
	\[
	\cup_j \phi_j^{-1}\left\{z\in r_j\tilde{T}\mid Re(z)< \frac{r_j}{2}\right\},
	\]
	endowed with the metric of $\sigma$, is a CAT(-1) space.
\end{proposition}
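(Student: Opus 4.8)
The plan is to recognize the space in question as a \emph{book} of convex hyperbolic pieces glued along their common spine, and to deduce the CAT(-1) property from Reshetnyak's gluing theorem. Write $R_j=\{z\in r_j\tilde{T}\mid Re(z)<\frac{r_j}{2}\}$ for the developed page and $P_j=\phi_j^{-1}(R_j)$ for its image in $\xs$, so that the space under consideration is $U=\bigcup_j P_j$ equipped with the length metric induced by $\sigma$. Each $\phi_j$ is an isometry onto $R_j\subset\H^2$, so every page carries the metric of a subset of the hyperbolic plane, and the whole problem reduces to understanding how these subsets fit together.

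First I would check that each $R_j$ is geodesically convex in $\H^2$. The triangle $r_j\tilde{T}$ is the intersection of the three half-planes bounded by its sides, each of which is a complete geodesic, and $\{Re(z)<\frac{r_j}{2}\}$ is the half-plane bounded by the vertical geodesic through the center of $r_j\tilde{T}$, namely the altitude from the ideal vertex at $\infty$. As an intersection of convex half-planes, $R_j$ is convex, and a convex subset of the CAT(-1) space $\H^2$ is itself CAT(-1). The reason for cutting at $Re(z)=\frac{r_j}{2}$ rather than taking the whole face is that $R_j$ then meets only one full edge of $r_j\tilde{T}$, namely the spine $e_j$ on the imaginary axis; it avoids the side $\{Re(z)=r_j\}$ entirely, so that the only edge of $X$ along which distinct pages are identified inside $U$ is $e$ itself.

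Second I would identify the gluing. Every page contains the entire spine $\{iy\mid y>0\}$, the development of $e_j$; by condition (2) in the definition of a local model, $\phi_j\vert_{e_j}=\phi_k\circ f$ for the gluing map $f:e_j\to e_k$, so the identifications of the spines in $X$ agree with identifying equal heights on the imaginary axis. Since in a simplicial complex faces are glued only along edges, while the side $\{Re(z)=r_j\}$ is excluded and the partial bottom edges run into the ideal vertices and are not mutually identified, the pages of $U$ meet pairwise exactly along the spine $e$. The ideal vertices at $0$ and $\infty$ are removed and lie at infinite distance, so any path joining two distinct pages must cross the spine; it follows that $U$ with its induced length metric is isometric to the abstract space obtained by gluing the convex pieces $R_1,\dots,R_n$ along the common geodesic $\ell=\{iy\mid y>0\}$.

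Finally I would invoke Reshetnyak's gluing theorem. The spine $\ell$ is a complete convex subset (a geodesic line, isometric to $\R$) of each page, so gluing $R_1$ and $R_2$ along $\ell$ yields a CAT(-1) space in which $\ell$ is again complete and convex; iterating over all $n$ pages produces $U$ as a CAT(-1) space. To meet the completeness requirement in the definition of CAT(-1) I would pass to closures, replacing each $R_j$ by the closed half-triangle $\{z\in r_j\tilde{T}\mid Re(z)\le\frac{r_j}{2}\}$, which is a complete convex set and leaves the book structure unchanged. The main obstacle is the middle step: one must argue carefully that the induced length metric on $U\subset\xs$ really coincides with the abstract book metric, i.e.\ that there are no shortcuts between pages other than across the spine $e$ --- in particular none through the removed ideal vertices or across the free boundary edges --- and that the global simplicial structure forces no additional identifications among the $R_j$. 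Everything after this reduces to the standard gluing theorem.
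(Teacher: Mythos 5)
Your proposal is correct and follows essentially the same route as the paper: each truncated half-face is a convex subset of $\H^2$, hence CAT($-1$), and the pieces are glued only along the common geodesic spine, so the gluing theorem (the paper cites Ghys--de la Harpe, Ch.~10; you cite Reshetnyak, which is the same statement) gives the result. Your added care about why the cut at $Re(z)=\frac{r_j}{2}$ prevents extra identifications, and about passing to closures for completeness, only fills in details the paper compresses into the remark that this choice ``avoids any topology in the set.''
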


\begin{proof}
	Since each face is isometric to an ideal hyperbolic triangle, it is clear that it is a CAT(-1) space.
	By \cite{Sur}, Chapter 10, Corollary 5 and Lemma 9, a union of a finite number of CAT(-1) spaces glued along a convex set is also CAT(-1). The choice of $Re(z)<\frac{r_j}{2}$ avoids any topology in the set. Since the edges of an ideal hyperbolic triangle are convex, the Proposition follows.
\end{proof}

\section{Existence of finite energy maps}\label{SFinite}

\subsection{Function spaces}

Let $\sigma, \tau$ be complete ideal hyperbolic metrics on $\xs$ as defined in Section~\ref{Smetrics}.

\begin{definition}
A map $u:(\xs,\sigma)\to(\xs,\tau)$ \textit{respects the simplicial structure} of $X$, or is a \textit{simplicial map} if
\begin{itemize}
	\item $u\vert_T$ maps $T$ to $T$ for each face $T$ of $X$, and
	\item $u\vert_e$ maps $e$ to $e$ for each edge $e$ of $X$.
\end{itemize}
\end{definition}

We will consider two classes of maps for the remainder of this paper. The first is
\[
W^{1,2} = \{u:(\xs,\sigma)\to(\xs,\tau)\mid u\text{ is simplicial, and }\forall T\in\mathcal{C}, u\vert_{T}\in W^{1,2}(T,T)\}.
\]
This class consists of all simplicial maps whose restriction to each face of $X$ is a $W^{1,2}$ map. The second class is
\[
\mathcal{D} = \{u\in W^{1,2} \mid \forall T\in\mathcal{C},u\vert_T\text{ is a diffeomorphism}\}.
\]
This class, motivated by the strategy of \cite{JS} for constructing harmonic diffeomorphisms, is the class of simplicial $W^{1,2}$ maps whose restriction to each face is a diffeomorphism.

We will also require the spaces of functions
\[
L^2(\xs,\sigma) = \{f:\xs\to\R \mid \sum_{T\in\mathcal{C}}\int_T f^2 <\infty\},
\]
and
\[
W^{1,2}(\xs,\sigma) = \{f\in L^2(\xs,\sigma) \mid \forall T\in\mathcal{C}, f\vert_T\in W^{1,2}(T)\}.
\]

In all cases, if $A$ is one of the classes defined above, $\overline{A}$ denotes the closure of $A$ with respect to weak convergence.

\subsection{Existence}

The goal of this section is to produce a map in $W^{1,2}$, i.e. a finite energy simplicial map from $(\xs,\sigma)$ to $(\xs,\tau)$. In fact we will produce a map in $\mathcal{D}$.

Recall that if $(X^m,g)$ and $(Y^n,h)$ are Riemannian manifolds, we define the energy of a smooth map $f:X\to Y$ as
\[
E(f):=\int_X\abs{\nabla f}^2d\mu,
\]
where 
\[
\abs{\nabla f}^2(x)=\sum_{i,j=1}^n\sum_{\alpha,\beta=1}^m g^{\alpha\beta}(x)h_{ij}(f(x))\dfrac{\partial f^i}{\partial x^{\alpha}}\dfrac{\partial f^j}{\partial x^{\beta}}
\]
with $x^{\alpha}$ and $f^i$ the local coordinate systems around $x$ and $f(x)$ respectively.
$\abs{\nabla f}^2$ is called the energy density.

Each face of $X$, endowed with either metric $\sigma$ or $\tau$, can be identified with the ideal hyperbolic triangle $\tilde{T}$ in an isometric way. Using this identification, a map $f:X\to X$ restricted to a face $T\in\mathcal{C}$ can be viewed as a map $f\vert_T:\tilde{T}\to\tilde{T}$. We can use the coordinates on $\tilde{T}$ as well as its hyperbolic metric to compute the energy $E(f\vert_T)$ of the map $f$ on the face $T$. The total energy of $f:X\to X$ is the sum over all the faces of $X$, namely

\[
E(f) = \sum_{T\in\mathcal{C}}E(f\vert_T).
\]

\begin{theorem}\label{finite energy}
There exists a map $H\in\mathcal{D}$. That is, $H:(\xs,\sigma)\to(\xs,\tau)$ with finite energy such that $H\vert_T$ is a diffeomorphism for each face $T$ of $X$.
\end{theorem}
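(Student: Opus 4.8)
The plan is to build $H$ one face at a time, after first pinning down its behaviour on the edges and in small neighborhoods of the vertices. The only genuinely analytic point is a finite-energy estimate near the cusps, and this is exactly where completeness of $\sigma$ and $\tau$ enters; everything else is gluing and a standard extension of boundary data across a disk. Concretely, I would truncate each face at a fixed horocyclic height near each of its three ideal vertices, producing for every face $T$ a compact ``core'' together with three ``cusp collars,'' and define $H$ separately on the collars and on the core.

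First I would fix the model near a vertex $v$. Developing the faces incident to $v$ with $v$ at $\infty$ as in the local model of a vertex, completeness forces each gluing to be a horizontal translation (this is precisely the relation $\sum_j\epsilon_j\alpha_{j+1,j}=0$), so the developed height $y$ descends to a well-defined function on a punctured neighborhood of $v$, \emph{simultaneously} for $\sigma$ and for $\tau$. In the $\sigma$-development a face $T_j$ occupies a wedge $\{0\le x\le r_j,\ y>c\}$ bounded by its two edges at $x=0$ and $x=r_j$, and in the $\tau$-development the same face occupies $\{0\le u\le r_j',\ v>c'\}$. I would then set $H|_{T_j}(x,y)=\left(\tfrac{r_j'}{r_j}\,x,\ y\right)$ on the collar. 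This carries each $\sigma$-wedge diffeomorphically onto the corresponding $\tau$-wedge, sends the two $\sigma$-edge positions to the two $\tau$-edge positions, and preserves the global height $y$. Because every wedge map preserves $y$ and the metrics glue edges by height-preserving horizontal translations, the induced edge maps agree across all faces meeting at $v$, so $H$ is well defined, continuous, and simplicial there, even across singular edges.

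The energy estimate on a collar is then immediate. Writing $\sigma=\tfrac{dx^2+dy^2}{y^2}$, $\tau=\tfrac{du^2+dv^2}{v^2}$, and $a_j=r_j'/r_j$, the map $H|_{T_j}(x,y)=(a_jx,y)$ has constant energy density $\abs{\nabla H}^2=a_j^2+1$, so with $d\mu=\tfrac{dx\,dy}{y^2}$ one gets
\[
E(H|_{T_j\cap\{y>c\}})=\int_{0}^{r_j}\int_{c}^{\infty}(a_j^2+1)\,\frac{dy\,dx}{y^2}=(a_j^2+1)\,\frac{r_j}{c}<\infty .
\]
Thus every collar contributes finite energy, and crucially the integrability comes from the factor $1/y^2$ being tamed by the \emph{finite} area $r_j/c$ of the cusp wedge, which is what completeness guarantees.

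It remains to fill in the compact cores. On each edge $e$ I would extend the two germs already prescribed at its ends to a single orientation-preserving diffeomorphism $H|_e\colon e\to e$, consistently with the simplicial identifications by the construction above. For each face $T$ I would then extend the three boundary diffeomorphisms together with the three collar maps to a diffeomorphism of the compact core, which is the standard extension of prescribed boundary-and-collar diffeomorphisms of a disk to a diffeomorphism of the disk (a collar interpolation via a partition of unity, arranging matching $1$-jets along the truncating horocycles). Since the core is compact and $H|_T$ is smooth there, its energy on the core is finite; summing the finitely many finite collar contributions and finitely many finite core contributions over the finitely many faces gives $E(H)<\infty$. By construction $H$ is simplicial and $H|_T$ is a diffeomorphism for every face $T$, so $H\in\mathcal{D}$. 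I expect the main obstacle to be precisely the cusp analysis: a generic simplicial diffeomorphism has infinite energy because the hyperbolic metric blows up at the ideal vertices, and the whole point is to choose the near-vertex model (horocycle-preserving, horizontally affine) so that its energy density stays uniformly bounded against the finite-area cusp; the gluing and extension steps are routine once this model is in place.
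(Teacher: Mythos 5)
Your argument is correct and shares the paper's skeleton --- an explicit finite-energy model on horoball collars around the vertices (where bounded energy density against the finite cusp area is the only analytic content), prescribed diffeomorphisms on the edges, and a fill-in of the compact core of each face --- but it fills the core by a genuinely different route. The paper solves a Dirichlet problem on each truncated face with the boundary values already prescribed, invokes Proposition 1 of \cite{JS} (convex target with smooth boundary, homeomorphic boundary data) to conclude the solution is a harmonic diffeomorphism, and then, since the glued map need not be differentiable across the interface with the collars, appeals to Theorem 1.2 of \cite{IKO} to approximate by a genuine diffeomorphism with controlled energy. You replace all of this with the elementary two-dimensional fact that a diffeomorphism prescribed on the boundary and a collar of a disk extends to a diffeomorphism of the disk; since the core is compact, any smooth extension has finite energy, so no harmonic-map input and no smoothing lemma are needed here (the only point requiring care is the corner/$1$-jet matching where the edges meet the truncating horocycles, which you note). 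Your collar model also differs slightly: the paper takes $H_v=\phi_{j,\tau}^{-1}\circ\phi_{j,\sigma}$, the identity in local-model coordinates, whereas you take the horizontal rescaling $(x,y)\mapsto(r_{j,\tau}x/r_{j,\sigma},\,y)$. Your version maps each $\sigma$-wedge \emph{onto} the corresponding $\tau$-wedge, sidestepping the domain/range mismatch the identity has when $r_{j,\sigma}\neq r_{j,\tau}$, at the harmless cost of energy density $a_j^2+1$ instead of $2$. The trade-off is that the paper's construction yields a map that is harmonic on a large compact piece of each face, which yours does not, but nothing in the statement requires that.
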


\begin{proof}
The biggest difficulty in constructing such a finite energy map is controlling the energy of the map around the punctured vertices of $X$.
Indeed, let $v$ be a vertex of $X$, $\Gamma(v)$ its link and $N(v)$ a neighborhood of $v$ homeomorphic to a cone on $\Gamma(v)$ and foliated by horocycles.
If the energy of a map in $N(v)$ is finite for all $v\in S$, then we can extend the map to all of $X$ more easily since $X\backslash\left(\cup_{v\in S}N(v)\right)$ is compact.

Also note that, since the complexes are finite and the volume of each face is equal to $\pi$, the volume of $N(v)$ is finite for each $v\in S$ (in fact the volume of $\xs$ is finite). 
Thus, any map with bounded energy density will have finite energy.

Fix $v\in S$. Let $\{\phi_{j,\sigma}:T_j\to r_{j,\sigma}\tilde{T}\}$ be a local model for $v$ with respect to the metric $\sigma$, and similarly $\{\phi_{j,\tau}:T_j\to r_{j,\tau}\tilde{T}\}$ for the metric $\tau$. Consider the neighborhoods $N_*(v) = \{\phi_{j,*}^{-1}(z)\in r_{j,*}\tilde{T} \mid Im(z)>2\}$ for metrics $*=\sigma,\tau$. Map $N_\sigma(v)$ to $N_\tau(v)$ as follows: for $x\in N_\sigma(v)\cap T_j$, define
\[H_v(x) = \phi_{j,\tau}^{-1}(\phi_{j,\sigma}(x)).\]

By definition of the local models of vertices, the map $H_v$ is well defined on $N_\sigma(v)$. Note also that this map is a smooth diffeomorphism on the interior of each $N_\sigma(v)\cap T_j$ and injective on the intersection of each edge with $N_\sigma(v)$. In the coordinates defined by the local models, the map $H_v\vert_{T_j}$ is simply the identity, whose energy density is 2. Hence the energy of $H_v$ on $N_\sigma(v)$ is finite, as there are finitely many faces incident to $v$.

Now define $H_w$ in $N_\sigma(w)$ for each vertex $w$ in $X$. The neighborhoods $N_\sigma(w)$ were chosen so that neighborhoods of distinct vertices do not intersect. Since there are finitely many vertices, we have used only finitely much energy in our construction thus far. We will initially define our map $H$ to coincide with $H_v$ on $N_\sigma(v)$ for each vertex $v$, but may later modify it near the horocyclic boundaries of these neighborhoods.

Let us now extend this map to the compact remainder of $X$. Fix an edge $e$ with endpoints $v$ and $w$. On $e\cap N_\sigma(v)$ let $H = H_v$ and on $e\cap N_\sigma(w)$ let $H = H_w$. On the remaining segment of $e$ define $H$ so that $H\vert_e$ is a smooth diffeomorphism. Do the same construction on each edge of $X$.

Now let $X'\subset X$ be a compact subset of $X$ so that for each triangle $T_j$, $X'\cap T_j$ is convex with respect to the metric $\tau$, with smooth boundary consisting of segments of edges of $T_j$ and curves within the neighborhoods $N_\tau(v)$ around the vertices of $T_j$. See for example Figure~\ref{convex}. To construct $X'$, let's work in the ideal triangle $\tilde{T}$. First note that the edges of $\tilde{T}$ are geodesic and hence convex. We connect the edges meeting at the point $0$ smoothly by a convex curve. We may move this curve to the other punctures by a  M\"obius transformation. By choosing these connecting curves to be close enough to the punctures, when we put them into $T_j$ by $\phi_j^{-1}$ the curves will lie in the neighborhoods $N_\tau(v)$ and their endpoints will match with the other faces sharing an edge.

\begin{figure}[ht]
	\centering
	\includegraphics[width=0.3\textwidth]{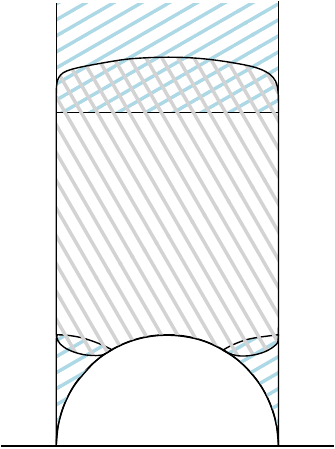}
	\caption{$\mathcal{C}^\infty$ convex set containing a hexagon}\label{convex}
\end{figure}

Let $\gamma$ be a curve in $X\cap T_j$ so that $H(\gamma) = \partial(X'\cap T_j)$. Since $H$ is only defined along edges and the neighborhoods $N_\sigma(v)$ so far, the curve $\gamma$ must lie in segments along the edges of $T_j$ as well as curves in the neighborhoods $N_\sigma(v)$. On the edges and on these neighborhoods $H$ is a smooth diffeomorphism, so $\gamma$ is a smooth curve bounding a compact region $\Omega_j\subset T_j$ and $H\vert_\gamma$ is a smooth diffeomorphism of $\gamma$ onto $\partial(X'\cap T_j)$.

Construct the solution $H_j:\Omega_j\to X'\cap T_j$. That is, $H_j$ is a harmonic map with $H_j\vert_\gamma = H\vert_\gamma$. By Proposition 1 of \cite{JS}, the map $H_j$ is a diffeomorphism on the interior of $\Omega_j$. Now on the interior of the face $T_j$ we define $H$ to coincide with $H_j$ inside $\Omega_j$ and to coincide with $H_v$ in $N_\sigma(v)\backslash\Omega_j$ for each vertex $v$ of $T_j$. Note that this redefines $H$ on a small part of the neighborhoods $N_\sigma(v)$. We can make this definition on each face of $X$, and since we have already fixed the values of $H$ on the edges of $X$, the map $H$ is continuous.

On the face $T_j$, the map $H$ coincides with $H_v$ on a neighborhood of the vertex $v$, so its energy near the punctures is finite. On $\Omega_j$ the map is harmonic and certainly has finite energy there. Since there are finitely many faces in $X$, the map $H$ has finite energy in total.

In the face $T_j$, $H$ is a diffeomorphism on the interior of $\Omega_j$ and on the interior of each $N_\sigma(v)\backslash\Omega_j$, but it may fail to be differentiable over $\partial\Omega_j$. But using Theorem 1.2 of \cite{IKO}, we know that we can approximate this map by diffeomorphisms with energy bounded by the energy of $H$ in $W^{1,2}$.  Making this approximation in each face, we have a finite energy map with all of the desired properties.
\end{proof}

\section{Existence of minimizing maps}\label{SExist}

In this section, we will show the existence of two maps from $(\xs, \sigma)$ to $(\xs, \tau)$, one of which minimizes energy among the class $W^{1,2}$ of finite energy simplicial maps, and the other minimizes energy on the class $\mathcal{D}$ of maps that are a diffeomorphism on each face of $X$.

In order to achieve sufficient regularity in our minimization scheme, we will have to do a harmonic replacement argument. We will first describe our harmonic replacement scheme. We will use Proposition 1 of \cite{JS}, so we will need some convex sets with smooth boundary in the target.

We construct a sequence of sets $X_n$ in the same way as the set $X'$ from the proof of Theorem~\ref{finite energy}, so that for each $n$, $X_n\cap T_j$ is convex with smooth boundary in $T_j$ for each face $T_j$ of $X$. We also require $X_n\subset X_{n+1}$ and $\cup_{n=1}^\infty X_n = \xs$. These sets $X_n$ can be constructed for either the domain or the target metric.

Before we proceed we'll need some preliminary results.

\begin{lemma}\label{Wcomplete}
	Let $G\in W^{1,2}$ be given.
	Let $\{u_k\}_k\subset W^{1,2}$  be an $L^2$ Cauchy sequence with $u_k\equiv G$ on $X\backslash X_n$ for each $k$. Suppose the energies of $u_k$ are uniformly bounded. Then the sequence converges in $L^2$ to a function $u\in W^{1,2}$ with $u\equiv G$ on $X\backslash X_n$.
\end{lemma}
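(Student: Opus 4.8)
The plan is to prove the three assertions---$L^2$ convergence, membership in $W^{1,2}$, and the boundary condition---in turn, working one face at a time since $X$ has only finitely many faces. Fix a face $T$ and use the isometric coordinates identifying the target copy of $T$ with the closed ideal triangle $\tilde{T}$. As a closed convex subset of the complete space $\H^2$, the triangle $\tilde{T}$ is itself a complete metric space, and the $L^2$ distance between two maps into $T$ is measured intrinsically by $\int_T d_\tau(u,v)^2$. Because the target is a complete metric space, the space of $L^2$ maps $T\to\tilde{T}$ is complete in this distance (the $L^2$-completeness statement of Korevaar--Schoen, \cite{KS1}). Hence the $L^2$-Cauchy sequence $\{u_k\vert_T\}$ converges in $L^2$ to a limit $u\vert_T$, and since $\tilde{T}$ is closed in $\H^2$ the limit again maps $T$ into $T$ almost everywhere. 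Assembling the limits over the finitely many faces produces the map $u$, and $u_k\to u$ in $L^2$.

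Next I would upgrade $u$ to an element of $W^{1,2}$. The energy functional is lower semicontinuous with respect to $L^2$ convergence (again \cite{KS1}), so the uniform bound $\sup_k E(u_k)<\infty$ gives $E(u)\le\liminf_k E(u_k)<\infty$. In particular each restriction $u\vert_T$ has finite energy, i.e. $u\vert_T\in W^{1,2}(T,T)$. The boundary condition is then immediate: the set $X\backslash X_n$ has positive measure and $u_k\equiv G$ there for every $k$, so passing to the $L^2$ limit forces $u=G$ almost everywhere on $X\backslash X_n$.

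It remains to check that $u$ is genuinely simplicial, i.e. that its restrictions to adjacent faces glue correctly along the shared edges and that edges are carried into edges. This is the delicate step, since $u$ is defined only almost everywhere and the edges have measure zero, so the statement must be read at the level of traces. Here I would invoke the trace theory for finite-energy maps into NPC targets: each $u_k\vert_T$ has a well-defined trace on $\partial T$, and because every $u_k$ is simplicial, the traces from the two faces adjacent to an edge $e$ agree and take values in $e$. Using the uniform energy bound together with the $L^2$ convergence, the traces of $u_k\vert_T$ converge to the trace of $u\vert_T$, so these matching properties pass to the limit. Consequently $u$ has well-defined, compatible traces sending each edge into itself, whence $u$ is simplicial and $u\in W^{1,2}$.

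The main obstacle I anticipate is the noncompactness of the target at the ideal vertices: the hyperbolic metric degenerates near the cusps, so a uniform hyperbolic energy bound does not control the maps in the Euclidean $W^{1,2}$ norm, and the naive approach of passing to Euclidean coordinates and applying compactness of the classical trace operator $W^{1,2}(T)\to L^2(\partial T)$ breaks down. I expect to circumvent this by staying inside the Korevaar--Schoen framework, working with the intrinsic $L^2$ distance and the CAT($-1$) structure from Proposition~\ref{cat-1} rather than with coordinates, and by exploiting that all variation occurs inside the compact region $X_n$---outside $X_n$ every $u_k$ equals the fixed map $G$---so the trace compatibility need only be verified across the edges of $X_n$ and across $\partial X_n$, where the data is pinned to $G$.
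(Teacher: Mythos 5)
Your proposal is correct and follows essentially the same route as the paper's proof: facewise $L^2$-completeness of maps into the ideal triangle from Korevaar--Schoen, convergence of traces under the uniform energy bound to preserve the simplicial gluing (the paper cites KS Theorem 1.12.2 for exactly this), and lower semicontinuity of energy to place the limit in $W^{1,2}$. The boundary condition and the finitely-many-faces reduction are handled identically, so there is nothing substantive to add.
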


\begin{proof}
	Let $\{u_k\}_k\subset W^{1,2}$ be an $L^2$ Cauchy sequence with uniformly bounded energies.
	Recall the definition (c.f. \cite{KS1} of the $L^2$ distance between maps from $(\xs,\sigma)$ to $(\xs,\tau)$:
	\[
	d^2(u,v) = \sum_{T\in\mathcal{C}}\int_T d_\tau^2(u(x),v(x))d\mu_\sigma(x).
	\]
	We know that the space of $L^2$ maps from $\tilde{T}$ to itself is a complete metric space. Since the sequence $u_{k,j} = u\vert_{T_j}$ is $L^2$-Cauchy, it converges to a function $u_{0,j}:T_j\to T_j$.
	Define 
	\[
	u(x)=u_{0,j}(x), \quad x\in T_j.
	\]
	In order to define $u$ at the boundary of $T_{j, n}$ we'll use Theorem 1.12.2 of \cite{KS1}.
	It states that each $u_{k,j}\vert_{\partial T_j}$ is a well-defined map in $L^2(\partial T_j, \xs)$.
	Moreover, since the energy of $u_{k,j}$ is bounded uniformly in $k$, the sequence $u_{k,j}\vert_{\partial T_j}$ converges in $L^2(\partial T_j, \xs)$ to $u_{0,j}\vert_{\partial T_j}$.
	Since for faces $T_j,T_\ell$ meeting at an edge $e$ we have $u_{k,j}\vert_e = u_{k,\ell}\vert_e$, the same remains true in the limit. That is, $u_{0,j}\vert_e = u_{0,\ell}\vert_e$, so the map $u$ is continuous over the edges of $X$.
	
	Since each $u_k$ satisfies $u_k\equiv G$ on $X\backslash X_n$, the same is true for $u$. And since the energy is a lower semi-continuous function (c.f. \cite{KS1}, Theorem 1.6.1), the $L^2$ limit $u$ of the sequence $u_k$ is in $W^{1,2}$.
\end{proof}

\begin{theorem}\label{poincare}
	\textbf{(Poincar\'e inequality for complexes).} 
	Let $K$ be a bounded connected compact subset of $X$ and $f\in W^{1,2}(\xs,\sigma)$ with compact support in $K$. 
	Then, there exists a constant $C$ only depending on $K$ and $X$ such that
	\[
	\int_K u^2d\mu_{\sigma}\leq C\int_K \abs{\nabla u}^2d\mu_{\sigma}.
	\]
\end{theorem}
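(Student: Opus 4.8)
The plan is to argue by contradiction, using the simplicial analogue of the standard compactness proof of the Poincar\'e inequality. Write $u := f$ for the function in the hypothesis. Suppose no such constant $C$ exists. Then for each $n\in\N$ there is a function $u_n\in W^{1,2}(\xs,\sigma)$ with compact support in $K$ satisfying
\[
\int_K u_n^2\,d\mu_\sigma > n\int_K\abs{\nabla u_n}^2\,d\mu_\sigma.
\]
After rescaling each $u_n$ so that $\int_K u_n^2\,d\mu_\sigma = 1$, this forces $\int_K\abs{\nabla u_n}^2\,d\mu_\sigma < 1/n\to 0$. Hence $\{u_n\}$ is bounded in $W^{1,2}$, with $L^2$-norm equal to $1$ and energy tending to $0$.

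Next I would extract a strongly convergent subsequence. Since $\sigma$ is complete and the cusp ends of $\xs$ lie at infinite distance, the compact support of each $u_n$ is bounded away from the vertices and meets only finitely many faces. On each such face $T$, the restriction $u_n\vert_T$ lies in $W^{1,2}$ of a fixed bounded hyperbolic region, so the Rellich--Kondrachov theorem yields a subsequence converging strongly in $L^2$. Diagonalizing over the finitely many faces gives $u_n\to u$ in $L^2(K)$, whence $\int_K u^2\,d\mu_\sigma = 1$. Exactly as in Lemma~\ref{Wcomplete}, the $L^2$ traces of $u_n$ along the edges converge and agree across each shared edge in the limit, so $u$ defines a genuine element of $W^{1,2}(\xs,\sigma)$ that is continuous over the edges of $X$ and still supported in $K$.

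Now I would invoke the lower semicontinuity of energy (c.f.\ \cite{KS1}, Theorem 1.6.1): since $\int_K\abs{\nabla u_n}^2\,d\mu_\sigma\to 0$, the limit satisfies $\int_K\abs{\nabla u}^2\,d\mu_\sigma = 0$, so $\nabla u = 0$ almost everywhere and $u$ is constant on the interior of each face. Because $u$ is continuous across the edges and $\xs$ is locally connected (the $1$-chainability noted after the definition of a complex), $u$ is constant on the connected set $K$. But $u$ has compact support in $K$, so it vanishes on an open subset and that constant must be $0$, contradicting $\int_K u^2\,d\mu_\sigma = 1$.

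The step I expect to be the main obstacle is the compactness argument: verifying Rellich--Kondrachov in the presence of both the cusp geometry and the gluing along edges. The cusp difficulty is sidestepped by the compact-support hypothesis, which confines all $u_n$ to a metrically compact portion of $\xs$, and the gluing is handled by the trace-matching already established in Lemma~\ref{Wcomplete}. The uniformity of the resulting constant $C$ in $K$ and $X$ is automatic from the contradiction scheme, since the only geometric input is through the compactness and lower-semicontinuity steps, which depend only on $K$ and the ambient complex.
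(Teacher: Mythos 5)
The paper does not actually prove this theorem: immediately after the statement it writes that the proof ``can be found in \cite{DM2}, Theorem 2.6,'' deferring entirely to Daskalopoulos--Mese. Your self-contained compactness argument is therefore a genuinely different route, and it is essentially sound. The normalization-and-contradiction scheme, Rellich--Kondrachov applied face by face (legitimate because on a $\sigma$-bounded region of an ideal triangle the hyperbolic and Euclidean Sobolev norms are comparable, and the functions extend by zero), lower semicontinuity of the Dirichlet energy, and the trace-matching via Theorem 1.12.2 of \cite{KS1} as in Lemma~\ref{Wcomplete} are all available here. The endgame is even a little easier than you make it: every face of a finite complex meets a vertex, hence contains points outside the bounded set $K$ where every $u_n$ vanishes, so the limit is identically $0$ on each face separately and you do not need to chain constants across edges using the connectivity of $K$. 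What the paper's citation buys is generality (the Daskalopoulos--Mese result covers general admissible complexes); what yours buys is a short, transparent argument using only tools the paper already deploys.

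One point must be made explicit for the argument to close. The theorem takes $K$ to be a compact subset of $X$, which a priori may contain a vertex; in that case the supports of the $u_n$, though each compact in $\xs$, need not lie in a single fixed compact subset of $\xs$, mass can escape into the cusp, and the Rellich--Kondrachov step fails (strong $L^2$ convergence can be lost exactly where you need $\int_K u^2=1$ to survive). You should either read ``bounded'' as boundedness with respect to $d_\sigma$ --- which forces $K$ away from the vertices and is what occurs in the paper's only use of the theorem, the sets $G^{-1}(X')$ in Theorem~\ref{dirichletW} --- or supplement the compactness argument with a Hardy-type estimate on cusp neighborhoods, which does hold: in the coordinate $s=\log y$ the substitution $v=ue^{-s/2}$ gives $\int u^2e^{-s}\,ds\le 4\int u_s^2e^{-s}\,ds$ for functions vanishing on the boundary horocycle, and this controls the cusp contribution uniformly. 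With the first reading adopted (or the second estimate added), your proof is complete.
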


The proof of this Theorem can be found in \cite{DM2}, Theorem 2.6.

\subsection{Harmonic Replacement}

Here we will construct harmonic maps on compact subsets of $\xs$ to use in our later arguments.

\begin{theorem}\label{dirichletW}
	Given $G\in W^{1,2}$ and a compact set $X'$ where $X'\cap T$ is convex with smooth boundary in $T$ for each face $T$, there exists $u\in W^{1,2}$ minimizing energy among all maps $v\in W^{1,2}$ with $v\equiv G$ on $X\backslash G^{-1}(X')$.
\end{theorem}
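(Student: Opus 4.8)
The plan is to run the direct method of the calculus of variations, using the non-positive curvature of the target to upgrade a minimizing sequence to an $L^2$-Cauchy sequence. First I would note that the competitor class $\mathcal{A}=\{v\in W^{1,2}\mid v\equiv G \text{ on } X\backslash G^{-1}(X')\}$ is nonempty, since it contains $G$, and that $E\geq 0$ on $\mathcal{A}$, so $\epsilon:=\inf_{v\in\mathcal{A}}E(v)$ is a finite nonnegative number. Because $X'$ is compact it stays a definite distance from the punctures, and the data $G$ sends a neighborhood of each vertex into the cusp regions disjoint from $X'$; hence $G^{-1}(X')$ is bounded away from $S$ and contained in some $X_n$ from our exhaustion, so every $v\in\mathcal{A}$ agrees with $G$ on $X\backslash X_n$. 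Fix a minimizing sequence $\{u_k\}\subset\mathcal{A}$ with $E(u_k)\to\epsilon$; its energies are uniformly bounded.

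The crux is to show $\{u_k\}$ is $L^2$-Cauchy. For each pair $k,\ell$ I would form the midpoint map $w_{k,\ell}$, where $w_{k,\ell}(x)$ is the midpoint of the geodesic joining $u_k(x)$ and $u_\ell(x)$. Since both maps are simplicial and each face is isometric to the convex triangle $\tilde T\subset\H^2$ with geodesic edges, this interpolation can be carried out within each face and is consistent along edges, so $w_{k,\ell}$ is again a simplicial $W^{1,2}$ map equal to $G$ off $G^{-1}(X')$, i.e. $w_{k,\ell}\in\mathcal{A}$. Working face by face — which is legitimate because each face is genuinely CAT$(-1)$ even though $(\xs,\tau)$ is only locally so — the Korevaar--Schoen energy-convexity inequality for NPC targets (\cite{KS1}) yields, after summing over faces,
\[
\tfrac14\int_{\xs}\abs{\nabla d_\tau(u_k,u_\ell)}^2\,d\mu_\sigma \;\leq\; \tfrac12 E(u_k)+\tfrac12 E(u_\ell)-E(w_{k,\ell}) \;\leq\; \tfrac12 E(u_k)+\tfrac12 E(u_\ell)-\epsilon,
\]
and the right-hand side tends to $0$. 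The function $f_{k,\ell}=d_\tau(u_k,u_\ell)$ lies in $W^{1,2}(\xs,\sigma)$ and has compact support in $G^{-1}(X')\subset X_n$, so the Poincar\'e inequality for complexes (Theorem~\ref{poincare}) bounds $\int f_{k,\ell}^2$ by $C\int\abs{\nabla f_{k,\ell}}^2\to 0$. As $d^2(u_k,u_\ell)=\int_{\xs}f_{k,\ell}^2\,d\mu_\sigma$, this shows the sequence is $L^2$-Cauchy.

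With $\{u_k\}$ an $L^2$-Cauchy sequence of uniformly bounded energy agreeing with $G$ on $X\backslash X_n$, Lemma~\ref{Wcomplete} produces an $L^2$ limit $u\in W^{1,2}$ with $u\equiv G$ on $X\backslash X_n$; since $L^2$ convergence preserves the boundary identity almost everywhere, in fact $u\equiv G$ on $X\backslash G^{-1}(X')$, so $u\in\mathcal{A}$. Lower semicontinuity of energy (\cite{KS1}, Theorem~1.6.1) then gives $E(u)\leq\liminf_k E(u_k)=\epsilon$, while membership in $\mathcal{A}$ forces $E(u)\geq\epsilon$; hence $E(u)=\epsilon$ and $u$ is the desired minimizer.

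I expect the main obstacle to be the middle paragraph. Two points demand care: that the midpoint construction is a genuine competitor (the simplicial and continuity-across-edges properties, which I resolve via convexity of faces and the fact that edges are geodesic), and that the energy-convexity inequality — cleanest for a single NPC target — can be invoked despite $(\xs,\tau)$ being only locally CAT$(-1)$. I would handle the latter by performing the estimate face by face, where each summand is an honest map into the convex set $\tilde T\subset\H^2$ and the intrinsic face-distance agrees with $d_\tau$ by convexity, and then summing. One should also confirm that $\abs{\nabla d_\tau(u_k,u_\ell)}$ is indeed the intrinsic $W^{1,2}$ gradient to which Theorem~\ref{poincare} applies, which again follows from the Korevaar--Schoen construction of the distance function as a $W^{1,2}$ map.
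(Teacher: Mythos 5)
Your proposal is correct and follows essentially the same route as the paper: a minimizing sequence, the Korevaar--Schoen midpoint/energy-convexity inequality to get $L^2$-Cauchyness, the Poincar\'e inequality of Theorem~\ref{poincare}, Lemma~\ref{Wcomplete}, and lower semicontinuity of energy. The only difference is that the paper first replaces each competitor by the face-by-face solution of the Dirichlet problem on $T\cap G^{-1}(X')$ before running the midpoint argument; that step is not needed for bare existence of the $W^{1,2}$ minimizer (it matters in the companion Theorem~\ref{dirichletD}), so its omission is harmless.
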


\begin{proof}
	Consider the class of maps $W^{1,2}_G$ of maps $u\in W^{1,2}$ with $u\equiv G$ on $X\backslash G^{-1}(X')$. We suppress the dependence of $W^{1,2}_G$ on the set $X'$ in the notation.
	Let $\{v_k\}\subset W^{1,2}_G$ be an energy minimizing sequence. 
	
	First replace $v_k$ with $u_k$, where $u_k = v_k = G$ on $X\backslash G^{-1}(X')$, and for each face $T$, $u_k$ solves the Dirichlet problem on $T\cap G^{-1}(X')$ with boundary data given by $v_k$. As $u_k$ solves a Dirichlet problem where it doesn't coincide with $v_k$, the energy of $u_k$ is no more than the energy of $v_k$, and $u_k$ also lies in $W^{1,2}_G$ so it is also a minimizing sequence.
	
	We will now show that $\{u_k\}$ is an $L^2$-Cauchy sequence. Define $u_{k,\ell}(x)$ to be the midpoint of the geodesic joining $u_k(x)$ and $u_\ell(x)$.
	Note that $u_{k,\ell}$ is well defined. 
	If $x$ lies in the interior of a face $T$, then since $u_k(x), u_\ell(x)\in T$ and $T$ is NPC, there exists a unique geodesic joining these two points and therefore $u_{k,\ell}$ is well defined. 
	If $x$ lies in an edge $e$, $u_k(x)$ and $u_\ell(x)$ also lie in that edge and therefore $u_{k,\ell}(x)\in e$ since edges are geodesics.
	Finally, it is clear that $u_{k,\ell}=G$ on $G^{-1}(X')$ since both $u_k$ and $u_\ell$ do.
	It is also easy to check that $u_{k,\ell}\in L^2$.
	
	From \cite{KS1} we have equation (4.1):
	\begin{equation}\label{bound}
	2E(u_{k,\ell})\leq E(u_k)+E(u_\ell)-\dfrac{1}{2}\int_{G^{-1}(X_n)}\abs{\nabla d_{\sigma}(u_k,u_\ell)}^2\leq 2K.
	\end{equation}	
	Since $\{u_k\}$ is a minimizing sequence, from \eqref{bound} we see that 	
	\[
	\lim_{i,j\to\infty}\int_{X_n}\abs{\nabla d_{\sigma}(u_i,u_j)}^2=0.
	\]
	Since $d_{\sigma}(u_k, u_\ell)\in W^{1,2}(\xs,\sigma)$ with compact support (Theorem 1.12.2 \cite{KS1}), we can use the Poincar\'e inequality of Theorem~\ref{poincare} and therefore the sequence $\{u_k\}$ is an $L^2$-Cauchy sequence. Moreover the energies of $u_k$ are uniformly bounded by $E(u_k)\leq E(G)$.
	Thus, by Lemma \ref{Wcomplete}, $\{u_k\}$ converges to a function $u\in W^{1,2}_G$. 
	
	By semicontinuity of the energy, 
	\[
	E(u)\leq \liminf_k E(u_k)= \inf_{v\in W^{1,2}_G}E(v).
	\]
	Since $u\in W^{1,2}_G$, we have $E(u) = \inf_{v\in W^{1,2}_G}E(v)$ as desired.

\end{proof}

A similar prove yields the same result for maps in the class $\overline{\mathcal{D}}$.

\begin{theorem}\label{dirichletD}
	Given $G\in\mathcal{D}$ and a compact set $X'$ as in Theorem~\ref{dirichletW}, there exists $u\in \overline{\mathcal{D}}$ minimizing energy among all maps in $v\in\overline{\mathcal{D}}$ with $v\equiv G$ on $G^{-1}(X')$.
	Moreover, for each face $T$, the restriction of $u$ to the interior of $T\cap G^{-1}(X)$ is a harmonic diffeomorphism.
\end{theorem}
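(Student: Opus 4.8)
The plan is to run the proof of Theorem~\ref{dirichletW} essentially verbatim, carrying the diffeomorphism constraint along and upgrading every harmonic map that appears to a harmonic diffeomorphism by Proposition 1 of \cite{JS}. I would begin with an energy-minimizing sequence $\{v_k\}\subset\mathcal{D}$ with $v_k\equiv G$ on $\xs\backslash G^{-1}(X')$ and perform the same face-by-face harmonic replacement: let $u_k=G$ off $G^{-1}(X')$, and on each $T\cap G^{-1}(X')$ let $u_k$ solve the Dirichlet problem with boundary values $v_k$. Because $v_k$ is a diffeomorphism of $T$, its trace is a diffeomorphism of $\partial(T\cap G^{-1}(X'))$ onto $\partial(X'\cap T)$, and $X'\cap T$ is convex by construction; hence Proposition 1 of \cite{JS} makes each $u_k$ a diffeomorphism on the interior of $T\cap G^{-1}(X')$. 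As in the proof of Theorem~\ref{finite energy}, $u_k$ can then be approximated in $W^{1,2}$ by genuine diffeomorphisms without raising its energy (Theorem 1.2 of \cite{IKO}), so $u_k\in\overline{\mathcal{D}}$, the energy has not increased, and $\{u_k\}$ remains minimizing.

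Next I would reproduce the $L^2$-Cauchy estimate: with $u_{k,\ell}$ the geodesic midpoint map, inequality~\eqref{bound} from \cite{KS1} together with the Poincar\'e inequality of Theorem~\ref{poincare} forces $\int_{G^{-1}(X')}\abs{\nabla d_\sigma(u_k,u_\ell)}^2\to0$. The one genuinely new point is that $u_{k,\ell}$ need not lie in $\overline{\mathcal{D}}$, since geodesic interpolation of two diffeomorphisms can fold, so I cannot bound $E(u_{k,\ell})$ below by $\inf_{\overline{\mathcal{D}}}E$ directly. I would bypass this by noting that $u_{k,\ell}$ is still admissible for the larger constrained class $W^{1,2}_G$, whence $E(u_{k,\ell})\ge\inf_{W^{1,2}_G}E$, and by establishing the identity $\inf_{\overline{\mathcal{D}}}E=\inf_{W^{1,2}_G}E$. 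This identity follows once the minimizer $u_*$ of Theorem~\ref{dirichletW} is shown to lie in $\overline{\mathcal{D}}$: since $\overline{\mathcal{D}}\subset W^{1,2}_G$ gives one inequality, and $u_*\in\overline{\mathcal{D}}$ gives the other. With the infima identified, the Cauchy argument closes exactly as before, Lemma~\ref{Wcomplete} produces an $L^2$-limit $u\in\overline{\mathcal{D}}$ (the class being closed under weak limits by definition), and lower semicontinuity yields $E(u)=\inf_{\overline{\mathcal{D}}}E$. In fact the same observation shows $u_*$ is itself already the desired minimizer, so the midpoint machinery is really an alternative route rather than a necessity.

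For the final claim I would argue that $u$ restricted to the interior of each $T\cap G^{-1}(X')$ is harmonic, being the $L^2$-limit of the harmonic maps $u_k$ with common boundary data $G$ on the cutting curves, and then invoke Proposition 1 of \cite{JS} once more to conclude it is a harmonic diffeomorphism there. I expect the main obstacle to be exactly the clash between the convexity (midpoint) machinery that powers the existence proof and the diffeomorphism constraint: the midpoint competitors leave $\overline{\mathcal{D}}$, so everything funnels into the identity $\inf_{\overline{\mathcal{D}}}E=\inf_{W^{1,2}_G}E$, and hence into verifying the hypotheses of Proposition 1 of \cite{JS} for the limiting harmonic map. The delicate input is that the boundary trace along the shared edges of $X$ — which form part of $\partial(X'\cap T)$ — is only a monotone limit of diffeomorphisms, so on the interior one must lean on the negative curvature of the target and the nonvanishing of the Jacobian of harmonic maps in the style of \cite{SY}, as packaged in Proposition 1 of \cite{JS}, rather than on diffeomorphic boundary data. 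I emphasize that this yields diffeomorphism on each face interior only; the stronger cross-edge diffeomorphism remains the separate local statement flagged as conjectural in the introduction and is not claimed here.
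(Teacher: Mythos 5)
Your overall architecture matches the paper's: face-by-face harmonic replacement, Proposition 1 of \cite{JS} to make each replacement a harmonic diffeomorphism on the interior of $T\cap G^{-1}(X')$, the midpoint/Poincar\'e Cauchy argument, Lemma~\ref{Wcomplete}, and a passage to the limit. You have also correctly diagnosed the one genuine subtlety, namely that the midpoint competitor $u_{k,\ell}$ need not lie in $\overline{\mathcal{D}}$, so the inequality $2E(u_{k,\ell})\le E(u_k)+E(u_\ell)-\frac{1}{2}\int\abs{\nabla d_\sigma(u_k,u_\ell)}^2$ only forces the gradient term to vanish if $\inf_{\overline{\mathcal{D}}_G}E=\inf_{W^{1,2}_G}E$. (The paper is silent on this point; it simply asserts that ``the remainder of the proof of Theorem~\ref{dirichletW}'' applies.) The problem is your proposed repair. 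You claim the identity of the infima ``follows once the minimizer $u_*$ of Theorem~\ref{dirichletW} is shown to lie in $\overline{\mathcal{D}}$,'' but you give no argument for that membership, and none is available at this stage: it amounts to showing that the unconstrained $W^{1,2}_G$ minimizer restricts to a diffeomorphism on each face, which requires knowing that its \emph{free} traces along the interior edges are monotone --- exactly the kind of statement the paper is only able to conjecture (and proving it here would make Theorem~\ref{dirichletD} redundant, as you yourself observe). As written, your Cauchy argument therefore rests on an unproved and probably hard claim. A repair that stays inside the paper's toolbox is to drop the midpoint machinery for this theorem altogether: the replaced sequence $u_k$ has uniformly bounded energy and fixed values off $G^{-1}(X')$, so the Korevaar--Schoen precompactness theory gives a subsequence converging weakly; the limit lies in $\overline{\mathcal{D}}_G$ by the very definition of $\overline{\mathcal{D}}$ as a weak closure, and lower semicontinuity of the energy (together with $\inf_{\mathcal{D}_G}E=\inf_{\overline{\mathcal{D}}_G}E$) makes it a minimizer.

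A smaller discrepancy concerns the final claim. You invoke Proposition 1 of \cite{JS} once more for the limiting map, but that proposition requires homeomorphic boundary data, and, as you note, the limit's trace along the interior edges is a priori only a monotone limit of diffeomorphisms. The paper instead observes that $u$ is harmonic with respect to its \emph{own} boundary values, that these boundary values are monotone, and then cites Theorem 5.1.1 of \cite{J}, which is the version of the univalence theorem adapted to monotone rather than homeomorphic boundary data. You should route the last step through that result.
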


\begin{proof}
	Consider the class $\mathcal{D}_G$ of maps $u\in\mathcal{D}$ with $u\equiv G$ on $G^{-1}(X')$ (again suppressing the dependence on $X'$ in the notation). Let $\{v_k\}\subset\overline{\mathcal{D}}_G$ be an energy minimizing sequence. We may in fact assume that $v_k\in\mathcal{D}_G$ by an approximation argument, as in Theorem 1.2 of \cite{IKO}.
	
	As in the proof of Theorem~\ref{dirichletW}, replace $v_k$ by $u_k$, where $u_k = v_k = G$ on $G^{-1}(X')$ and for each face $T$, $u_k$ solves the Dirichlet problem on $T\cap G^{-1}(X')$ with boundary data given by $v_k$. Since $T\cap X'$ is convex in the target metric with smooth boundary and the boundary values given by $G$ are a homeomorphism, Proposition 1 of \cite{JS} implies that $u_k$ restricted to the interior of $T\cap G^{-1}(X')$ is a harmonic diffeomorphism. 
	
	The remainder of the proof of Theorem~\ref{dirichletW} proves that the sequence $u_k$ constructed in this proof converges in $L^2$ to a function $u\in W^{1,2}_G$ with $E(u) = \inf_{v\in\mathcal{D}_G}E(v)$.

	Moreover, since energy bounded sequences of $W^{1,2}$ are sequentially weakly compact, $u$ is the weak $W^{1,2}$ limit of functions in $\mathcal{D}_G$, we have $u\in\overline{\mathcal{D}}_G$.
	
	For each face $T$, the map $u$ restricted to the interior of $T\cap G^{-1}(X')$ is a limit of harmonic diffeomorphisms, and hence harmonic. Since it is harmonic with respect to its own boundary values, and those boundary values are monotonic (as a limit of diffeomorphisms), the restriction of $u$ to the interior of $T\cap G^{-1}(X')$ is a harmonic diffeomorphism, by Theorem 5.1.1 of \cite{J}.
\end{proof}

\begin{proposition}\label{homotopic}
	The energy minimizing maps in $W^{1,2}_G$ and $\overline{\mathcal{D}}_G$ constructed in the previous theorems are homotopic to $G$. 	
\end{proposition}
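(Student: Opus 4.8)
The plan is to construct an explicit \emph{geodesic homotopy}, exploiting the fact that the target faces are CAT($-1$), hence NPC, so that geodesics between points of a common face are unique and depend continuously on their endpoints. Write $u$ for either of the two minimizing maps; the argument is identical for the minimizer in $W^{1,2}_G$ and for the one in $\overline{\mathcal{D}}_G$. Since $u$ and $G$ are both simplicial, for every $x$ the points $G(x)$ and $u(x)$ lie in a common face $T$ of $X$, and if $x$ lies on an edge $e$ then both lie on $e$. I would then define
\[
F(x,t) = \gamma_x(t), \qquad (x,t)\in(\xs)\times[0,1],
\]
where $\gamma_x\colon[0,1]\to\xs$ is the constant-speed geodesic with $\gamma_x(0)=G(x)$ and $\gamma_x(1)=u(x)$.

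First I would verify that $F$ is well defined and that $F(\cdot,t)$ is simplicial for each $t$. Because each face is isometric to $\tilde{T}$, which is NPC, there is a unique geodesic in $T$ joining $G(x)$ and $u(x)$ (uniqueness of geodesics in NPC spaces), and by convexity of the faces this geodesic remains in $T$; hence $F(\cdot,t)$ carries $T$ into $T$. If $x$ lies on an edge $e$, then $G(x),u(x)\in e$, and since edges are geodesics we have $\gamma_x\subset e$, so $F(\cdot,t)$ carries $e$ into $e$ and the value $F(x,t)$ does not depend on which incident face is used to compute it. This is the key point that guarantees consistency across edges, including the singular edges incident to three or more faces. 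By construction $F(\cdot,0)=G$ and $F(\cdot,1)=u$, and since $u\equiv G$ on $\xs\setminus G^{-1}(X')$ the homotopy is stationary there; in particular it is a homotopy rel the complement of $G^{-1}(X')$, and it is stationary near the punctured vertices because $G^{-1}(X')$ excludes the cusp neighborhoods.

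It remains to check that $F$ is jointly continuous. Continuity in $t$ is immediate from the definition of $\gamma_x$, and the heart of the matter is continuity in $x$, which follows once $G$ and $u$ are known to be continuous, using that in an NPC space the assignment sending a pair of endpoints to the point at parameter $t$ on the geodesic joining them is continuous (indeed Lipschitz). The main obstacle is precisely the continuity of $u$ up to and across the edges: $u$ is continuous on the interior of each face because it is harmonic there, and by Lemma~\ref{Wcomplete} the boundary traces of $u$ from adjacent faces agree along each edge, so $u$ is single-valued and continuous over the edges; combining interior continuity with matching traces yields continuity of $u$ on $\xs$, and hence of $\gamma_x$ in $x$ even as $x$ crosses an edge. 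With these ingredients $F$ is a continuous simplicial homotopy from $G$ to $u$, proving the proposition.
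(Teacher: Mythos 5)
Your proposal is correct and follows essentially the same route as the paper: both construct the geodesic homotopy $F(x,t)=\gamma_x(t)$ using the NPC structure of each face and each edge, and both rely on simpliciality to guarantee that the geodesics stay in the appropriate face or edge so that the homotopies patch compatibly across the skeleton. Your write-up simply makes explicit several points the paper leaves implicit (continuity of $u$ over edges, consistency along singular edges, and stationarity on $\xs\setminus G^{-1}(X')$).
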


\begin{proof}
	Recall that any two continuous maps whose target is an NPC space are homotopic by geodesic homotopy. That is, for maps $u_0$ and $u_1$, connect $u_0(x)$ and $u_1(x)$ by the unique geodesic, and let $u_t(x)$ be the point a fraction $t$ of the way along this geodesic. The homotopy $u_t$ connects the maps $u_0$ and $u_1$.
		
	Since all maps involved are simplicial, and each open face and each edge of $X$ is NPC, the restrictions of $u$ and $G$ to each face and each edge are homotopic.
	Furthermore these homotopies are all compatible.
\end{proof}

\subsection{Local Regularity}

Here we will establish enough regularity for the harmonic replacements constructed above so that we may use it in our global construction.

For a point $p_0\in (\xs,\sigma)$, let $\text{st}(p_0)$ denote the union of all closed faces containing $p_0$. We will always assume that a ball centered at $p_0$ is contained in $\text{st}(p_0)$. There are two types of balls in $\xs$ that we will consider, and we may take Euclidean balls since the hyperbolic metric in the triangle $\tilde{T}$ is conformally Euclidean.
\begin{enumerate}[(i)]
	\item If $p_0$ lies in the interior of a face $T$, let $\phi:T\to\tilde{T}$ be an isometry. Take the Euclidean ball $B_r(\phi(p_0))$, a topological disk.
	\item If $p_0$ lies on an edge $e$, let $\{\phi_j:T_j\to r_j \tilde{T}\}$ be a local model for $e$. If there are $N$ faces incident to $e$ (with multiplicity) then take the $N$ Euclidean half discs $B_r(\phi_j(p_0))\cap r_j\tilde{T}$ identified along their common boundary diameter.
\end{enumerate}

\begin{theorem}\label{3.9}
Let $p_0\in e$ be a point in an edge of $(\xs,\sigma)$, and let $u:B_r(p_0)\to(\xs,\tau)$ be minimizing among maps that map $e\cap B_r(p_0)$ to $e$ and $T_j\cap B_r(p_0)$ to $T_j$ for each face $T_j$, and have the same trace on $\partial B_r(p_0)$. For local models $\{\phi_{j,\alpha}:T_j\to r_{j,\alpha}\tilde{T}\}$ for $e$ with respect to the metrics $\alpha = \sigma,\tau$, let $f_j = f_j^1+if_j^2 = \phi_{j,\tau}\circ u\circ\phi_{j,\sigma}^{-1}$ represent $u\vert_{T_j\cap B_r(p_0)}$ in coordinates. Define the Hopf differential
\[
\varphi_j = \left(\abs{\frac{\partial f_j}{\partial x}}^2 - \abs{\frac{\partial f_j}{\partial y}}^2 - 2i\left\langle\frac{\partial f_j}{\partial x},\frac{\partial f_j}{\partial y}\right\rangle\right)dz^2.
\]
Then $\varphi_j$ is holomorphic in the interior of $T_j\cap B_r(p_0)$, and moreover for all $y$ where defined,
\[
Im\sum_{j=1}^N\varphi_j(iy) = 0.
\]
\end{theorem}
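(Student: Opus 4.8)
The plan is to treat the two assertions separately: holomorphicity of each $\varphi_j$ follows from interior harmonicity, while the balancing identity on the edge comes from a structure-preserving inner (domain) variation of the minimizer.

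For the holomorphicity, I would first record that, since $u$ minimizes energy among the admissible maps, its restriction to the interior of each face $T_j$ is a genuine harmonic map into the smooth, negatively curved surface $(r_{j,\tau}\tilde{T},ds^2)$, so by standard interior regularity $f_j$ is smooth there. The quantity $\varphi_j$ is exactly the $(2,0)$-part of the pull-back $f_j^\ast ds^2$, where the inner products are those of the target hyperbolic metric (this is what makes the following identity hold; the conformal factor of the \emph{domain} metric is irrelevant by conformal invariance of the energy in dimension two). Writing $\varphi_j = 4\langle \partial_z f_j,\partial_z f_j\rangle$, the classical computation gives $\partial_{\bar z}\varphi_j = 2\langle\tau(f_j),\partial_z f_j\rangle$ with respect to the target connection, which vanishes because the tension field $\tau(f_j)$ is zero. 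Hence $\varphi_j$ is holomorphic on the interior of $T_j\cap B_r(p_0)$.

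The balancing identity is the substantive part. In the local model the common edge $e$ is the imaginary axis and each $T_j$ occupies $\{x\ge 0\}$, so $\varphi_j(iy)$ is the boundary value on $\{x=0\}$ of the holomorphic coefficient; in terms of the (target-metric) stress–energy tensor $S^{(j)}_{\alpha\beta}=\langle\partial_\alpha f_j,\partial_\beta f_j\rangle-\tfrac12\delta_{\alpha\beta}|df_j|^2$ one has $\mathrm{Im}\,\varphi_j=-2S^{(j)}_{xy}$. I would take a single smooth vector field $W=\xi(x,y)\partial_y$ on the local model, compactly supported away from $\partial B_r(p_0)$; since $\partial_y$ is tangent to $e$ and the local models agree along $e$, the field $W$ descends to a single reparametrization of the complex respecting the simplicial structure, and its flow $\psi_t$ preserves each face and the edge. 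Thus $u\circ\psi_t$ is admissible and minimality gives $\frac{d}{dt}E(u\circ\psi_t)\big|_{t=0}=0$. The first inner variation equals $-\sum_j\int_{T_j}S^{(j)}_{\alpha\beta}\,\partial^\alpha W^\beta$; integrating by parts on each half-disc, the interior terms vanish because $S^{(j)}$ is divergence-free (equivalently $\varphi_j$ is holomorphic, just established) and the outer-arc terms vanish because $W$ is supported in the interior, leaving only the edge contribution, proportional to $\int\bigl(\sum_j S^{(j)}_{xy}(0,y)\bigr)\xi(0,y)\,dy$. Since $\xi$ is arbitrary this forces $\sum_j S^{(j)}_{xy}(0,y)=0$, i.e.\ $\mathrm{Im}\sum_j\varphi_j(iy)=0$.

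The main obstacle I anticipate is making the edge boundary term rigorous: the integration by parts up to $\{x=0\}$ and the evaluation of the trace $S^{(j)}_{xy}(0,y)$ require that $\varphi_j$, a priori only holomorphic and $L^1$ on the half-disc, possess boundary values on the edge. I would handle this by first deriving the identity in weak (distributional) form directly from $\frac{d}{dt}E(u\circ\psi_t)\big|_{0}=0$, which is a change-of-variables computation valid for $W^{1,2}$ maps and needs no boundary regularity, and then upgrading to the pointwise statement using that each holomorphic $\varphi_j$ has finite mass and hence a well-defined boundary trace on $\{x=0\}$ against which the weak identity can be tested. A secondary point to verify carefully is that the admissible class is genuinely closed under the reparametrizations $\psi_t$ (mapping $e$ to $e$, each $T_j$ to $T_j$, and fixing the trace on $\partial B_r(p_0)$), which is exactly guaranteed by the tangency of $W$ to $e$ and its interior support.
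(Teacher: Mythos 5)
Your proposal is correct and follows essentially the same route as the paper, which defers to Daskalopoulos--Mese (Theorem 3.9 of [DM1]): holomorphicity of each $\varphi_j$ from interior harmonicity, and the edge balancing identity from an inner (domain) variation by a vector field tangent to the edge that descends to the local model --- exactly the ``domain variation argument involving precomposing the map with a diffeomorphism of the domain'' that the paper's own remark describes. Your attention to deriving the identity weakly first and then recovering the pointwise trace is consistent with the theorem's hedge ``for all $y$ where defined.''
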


The vanishing of the Hopf differential is a well-known fact for harmonic maps, and the proof of the balancing formula can be found in \cite{DM1}, Theorem 3.9.

\begin{theorem}\label{3.10}
	If $u:B_r(p_0)\to (\xs, \tau)$ is minimizing in the same sense as in the previous Theorem, then with respect to the Euclidean metric in $B_r(p_0)$, for each $j=1,\ldots,N$,
	\[
	\abs{\frac{\partial f_j}{\partial y}}^2(\bar{x}+i\bar{y})\leq \dfrac{2}{\pi r^2}E(u) \quad \text{and}\quad \abs{\frac{\partial f_j}{\partial x}}^2(\bar{x}+i\bar{y})\leq \dfrac{2N+2}{\pi r^2}E(u),
	\]
	where $r$ is no more than the distance from $(\bar{x},\bar{y})$ to $\partial B_{1,N}$.
\end{theorem}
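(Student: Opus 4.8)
The plan is to reduce each pointwise bound to a mean-value inequality for a subharmonic function evaluated over the Euclidean $r$-ball. I work in the flat model: since the hyperbolic metric on $\tilde T$ is conformal to the Euclidean one and the energy of a map from a surface is conformally invariant in the domain, I may use the flat Laplacian on $B_r(p_0)$ together with the bound $\int_{B_r}e_j\le E(u)$, where $e_j=|\partial f_j/\partial x|^2+|\partial f_j/\partial y|^2$ is the (target-metric) energy density of $f_j$. Writing $\Phi_j$ for the coefficient of the Hopf differential, so that $\varphi_j=\Phi_j\,dz^2$ and $\mathrm{Re}\,\Phi_j=|\partial f_j/\partial x|^2-|\partial f_j/\partial y|^2$, I record the two identities
\[
\left|\frac{\partial f_j}{\partial y}\right|^2=\tfrac12\left(e_j-\mathrm{Re}\,\Phi_j\right),\qquad \left|\frac{\partial f_j}{\partial x}\right|^2=\tfrac12\left(e_j+\mathrm{Re}\,\Phi_j\right).
\]

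Next I establish subharmonicity. Because the target is CAT($-1$), hence non-positively curved, and the domain is flat, the Bochner formula gives $\Delta e_j\ge 0$ on the interior of each face; and by Theorem~\ref{3.9} the Hopf differential $\varphi_j$ is holomorphic, so $\mathrm{Re}\,\Phi_j$ is harmonic there. Consequently both $|\partial f_j/\partial x|^2$ and $|\partial f_j/\partial y|^2$ are subharmonic on the interior of $T_j$. When $(\bar x,\bar y)$ lies in the interior of a face and $r$ does not exceed its distance to the edge, the $r$-ball is a genuine disk, and the sub-mean-value inequality together with the fact that each of $\int_{B_r}|\partial f_j/\partial x|^2$ and $\int_{B_r}|\partial f_j/\partial y|^2$ is $\le\int_{B_r}e_j\le E(u)$ already yields both bounds with the better constant $\frac{1}{\pi r^2}$. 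So the interior case is not where the stated constants come from.

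The real work is at the edge, where the $r$-ball is a union of $N$ half-disks (``pages'') glued along the spine $\{x=0\}$, and $(\bar x,\bar y)$ may sit on the spine, a boundary point of each page at which no interior mean-value property is available. Two structural facts drive the estimate. First, on the spine the tangential derivative $\partial f_j/\partial y$ equals the derivative of the common edge curve, so $|\partial f_j/\partial y|^2$ has a single trace shared by all pages, and the condition $f_j^1\equiv 0$ on the spine (the edge maps into the geodesic $\{\mathrm{Re}=0\}$) controls the relevant boundary terms; a reflection across the spine, legitimate for the tangential density because the boundary maps into a geodesic, shows that $|\partial f_j/\partial y|^2$ obeys the half-disk sub-mean-value inequality, whence $|\partial f_j/\partial y|^2(\bar x+i\bar y)\le\frac{2}{\pi r^2}\int_{\text{page}}e_j\le\frac{2}{\pi r^2}E(u)$. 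Second, for $|\partial f_j/\partial x|^2$ the same reflection is obstructed: each $f_j$ solves a Dirichlet problem whose spine trace is slaved to the neighboring faces, so the normal derivative $\partial f_j^2/\partial x$ need not vanish on the spine and the even reflection of $e_j$ picks up a singular term there. Here I invoke the balancing formula of Theorem~\ref{3.9}: the relation $\mathrm{Im}\sum_j\Phi_j=0$ on the spine forces $\sum_j\langle\partial f_j/\partial x,\partial f_j/\partial y\rangle=0$, i.e. $\sum_j\partial f_j^2/\partial x=0$ along the edge, so summing the reflected energy densities over all $N$ pages cancels the leading spine contribution. Carrying out this collective mean-value estimate over the $N$ pages and adding back the already controlled tangential term produces the stated constant $\frac{2N+2}{\pi r^2}$.

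I expect the main obstacle to be exactly this spine estimate. At an interior boundary point of a half-disk a subharmonic function is not controlled by its interior average, and the naive Schwarz reflection of the harmonic map $f_j$ fails, because the edge trace is fixed by the coupled minimization rather than by a free (Neumann) condition, so there is no reason for $\partial f_j^2/\partial x$ to vanish page-by-page. The obstruction disappears only after summing over the faces incident to the edge, and it is precisely the identity $\mathrm{Im}\sum_j\varphi_j(iy)=0$ that supplies this cancellation. Making the reflection-plus-balancing bookkeeping rigorous—verifying that the relevant even reflections are subharmonic in the distributional sense and that the balancing term exactly absorbs the jump in the normal derivatives across the spine—is the delicate step, and it is also what forces the number $N$ of incident faces into the estimate for $|\partial f_j/\partial x|^2$ while leaving the estimate for $|\partial f_j/\partial y|^2$ independent of $N$. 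This follows the method of \cite{DM1}.
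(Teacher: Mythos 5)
The paper does not prove this statement itself --- it simply cites \cite{DM1}, Theorem 3.10 --- and your overall framework (sub-mean-value inequalities on the union of half-disks, reflection across the spine, the balancing formula of Theorem~\ref{3.9}) is indeed the method used there. But there is a genuine gap at the step you treat as routine: the tangential estimate. You assert that the even reflection of $\abs{\partial f_j/\partial y}^2$ across the spine obeys the sub-mean-value inequality ``because the boundary maps into a geodesic.'' The even reflection $\tilde w(x,y)=w(\abs{x},y)$ of a function $w$ subharmonic on the half-disk satisfies, distributionally, $\Delta\tilde w=\widetilde{\Delta w}+2\,\frac{\partial w}{\partial x}(0^+,y)\,\delta_{\{x=0\}}$, so it is subharmonic only when the inward normal derivative of $w$ is nonnegative on the spine (already $w=-x$, harmonic on the half-disk, violates the mean-value inequality after even reflection). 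For $w=\abs{\partial f_j/\partial y}^2=\left((\partial_yf_j^1)^2+(\partial_yf_j^2)^2\right)/(f_j^2)^2$ the geodesic condition $f_j^1\equiv0$ on the spine only kills the terms containing $\partial_yf_j^1$; what survives is
\[
\frac{\partial w}{\partial x}\Big\vert_{x=0}=\frac{2\,\partial_yf_j^2\;\partial^2_{xy}f_j^2}{(f_j^2)^2}-\frac{2(\partial_yf_j^2)^2\,\partial_xf_j^2}{(f_j^2)^3},
\]
which involves exactly the normal derivative $\partial_xf_j^2$ that you correctly identify as uncontrolled page-by-page when you discuss the $x$-estimate. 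So the balancing condition is needed for the tangential bound too: one bounds $\abs{\partial f_j/\partial y}^2\le\sum_k\abs{\partial f_k/\partial y}^2$ and reflects the \emph{sum}, whose spine normal derivative vanishes because $f_j^2$ and $\partial_yf_j^2$ are common to all pages on the spine while $\sum_k\partial_xf_k^2=0$ and hence $\sum_k\partial^2_{xy}f_k^2=0$. This is also why the stated bound involves the total energy $E(u)$ rather than the single-page energy $\int e_j$ that your argument would otherwise yield.

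For the normal derivative you only sketch the plan, and the sketch as written does not close: even granting the pointwise balancing identity, the inward normal derivative of $\sum_ke_k$ on the spine still contains terms such as $\sum_k\partial_xf_k^1\,\partial^2_xf_k^1$, $\sum_k\partial_xf_k^2\,\partial^2_xf_k^2$ and $\sum_ke_k\,\partial_xf_k^2$, none of which is cancelled by $\sum_k\partial_xf_k^2=0$; so ``summing the reflected energy densities over all $N$ pages cancels the leading spine contribution'' is not justified, and the specific constant $\frac{2N+2}{\pi r^2}$ --- the whole content of the second inequality --- is never actually produced. This is the delicate part of \cite{DM1}'s argument (isolating a combination of the densities and of $\abs{\varphi_k}$ whose reflection is genuinely subharmonic, with the factor $N$ coming from majorizing a single page by the sum over all $N$ pages), and it cannot be waved through; either carry out that bookkeeping or, as the paper does, quote \cite{DM1}, Theorem 3.10 directly. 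A last small point: your passage from $Im\sum_j\varphi_j(iy)=0$ to $\sum_j\partial_xf_j^2=0$ divides by $\partial_yf_j^2$, so it should be stated only where that quantity is nonzero, or handled in the integrated (weak) form.
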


The proof of this theorem can also be found in \cite{DM1}, Theorem 3.10.

\begin{remark}
    The proof of Theorem~\ref{3.9} relies on a domain variation argument involving precomposing the map with a diffeomorphism of the domain. Hence the result also applies if we require the map $u$ minimizes among diffeomorphisms (i.e. in the class $\mathcal{D}$). The proof of Theorem~\ref{3.10} relies only on the harmonicity of the map on the interior of each half disc and the results of Theorem~\ref{3.9}, so this result too applies if we require the map $u$ minimizes among diffeomorphisms.
\end{remark}

\begin{theorem}\label{local lip}
	Let $G\in W^{1,2}$, $X'\subset(\xs,\tau)$ compact as in Theorem~\ref{dirichletW}, and let $u\in W^{1,2}$ be the energy minimizing map constructed in Theorem~\ref{dirichletW}.
	Let $V$ be a compact subset in the interior of $G^{-1}(X')$.
	For a face $T$ and a point $p\in V\cap T$, let $\phi:T\to\tilde{T}$ be an isometry that sends $p$ to the left of the line $\left\{\frac{1}{2}+is\mid s>0\right\}$ and above the horocycle $\left\{s+i\frac{\sqrt{3}}{2}\mid0\le s\le1\right\}$.
	Let $q = q(p) \in \{is \mid s\ge1\}$ be the point closest to $\phi(p)$ with respect to the Euclidean distance in $\tilde{T}$.
	Let $\rho = \frac{\sqrt{5}-1}{2}$ and assume the Euclidean ball of center $q$ and radius $\rho$, intersected with $\tilde{T}$ lies in the interior of $\phi(G^{-1}(X'))$ for every $p\in V$.
	Then there is a constant $C_T$ depending only on $V\cap T$ so that
	\[
	\abs{\nabla u}^2\leq C_TE(u)
	\]
	with respect to the hyperbolic metric in $V\cap T$. As a result, $u$ is Lipschitz continuous in $V$, with Lipschitz constant depending only on $V$ and the energy of $u$.
\end{theorem}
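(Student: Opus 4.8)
The plan is to establish a pointwise bound on the energy density $\abs{\nabla u}^2$ at an arbitrary point $p \in V \cap T$ by transferring the problem to the Euclidean estimates of Theorem~\ref{3.10}. The conformal factor of the hyperbolic metric on $\tilde T$ is $1/y^2$, so the hyperbolic energy density and the Euclidean energy density of $f_j = \phi_\tau \circ u \circ \phi_\sigma^{-1}$ differ by a bounded factor as long as the $y$-coordinate of the relevant points stays bounded above and below. The hypotheses on $\phi$ — that $p$ lies above the horocycle $\{s + i\tfrac{\sqrt3}{2}\}$ and to the left of $\{\tfrac12 + is\}$ — are precisely what is needed to pin down the geometry: the point $q(p)$ on the spine $\{is : s \ge 1\}$ nearest $\phi(p)$ has $y$-coordinate $\ge 1$, and the Euclidean ball $B_\rho(q) \cap \tilde T$ sits inside $\phi(G^{-1}(X'))$, where $u$ is energy-minimizing in the sense required by Theorems~\ref{3.9} and~\ref{3.10}.

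First I would dispose of the easy case where $p$ lies in the interior of $T$, far from the edges: here $u\vert_T$ is a harmonic map into the NPC face, and one applies the interior gradient estimate (the interior analogue of Theorem~\ref{3.10}, which holds on a single disc) on a Euclidean ball of radius $\rho$ around $q(p)$ to get $\abs{\partial f/\partial x}^2$ and $\abs{\partial f/\partial y}^2$ bounded by $CE(u)/\rho^2$. The substantive case is when $p$ is near an edge $e$. There I would take the local model $\{\phi_{j,\sigma}\}$ for $e$ and apply Theorem~\ref{3.10} directly on the configuration of $N$ Euclidean half-discs glued along their common diameter, centered at $q(p)$ with radius $\rho = \frac{\sqrt5-1}{2}$. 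This yields
\[
\abs{\frac{\partial f_j}{\partial y}}^2(q) \le \frac{2}{\pi\rho^2}E(u), \qquad \abs{\frac{\partial f_j}{\partial x}}^2(q) \le \frac{2N+2}{\pi\rho^2}E(u).
\]
Since the Euclidean energy density is the sum of these two quantities, I obtain a Euclidean bound at $q$. To pass from $q$ to the nearby point $\phi(p)$, I would invoke the subharmonicity of $\abs{\nabla u}^2$ (Bochner/subharmonicity for harmonic maps into NPC targets away from the edge, and the subharmonicity established via the balancing formula of Theorem~\ref{3.9} across the edge), so that the value at $\phi(p)$ is controlled by a mean over a slightly larger disc and hence by the same multiple of $E(u)$.

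The choice $\rho = \frac{\sqrt5-1}{2}$ is exactly the radius for which $B_\rho(q)$, with $q$ on the spine above height $1$, stays uniformly away from the ideal vertices and from the opposite edges of $\tilde T$, guaranteeing the conformal factor $1/y^2$ is pinched between two positive constants on $B_\rho(q)\cap\tilde T$. Converting the Euclidean density bound back to the hyperbolic density then costs only this bounded factor, producing the constant $C_T$ depending solely on the location of $V\cap T$ within the triangle. Finally, to upgrade the pointwise density bound to Lipschitz continuity, I would integrate along geodesics: a bound $\abs{\nabla u}^2 \le C_T E(u)$ on $V\cap T$ means $u\vert_T$ is Lipschitz with constant $\sqrt{C_T E(u)}$, and since $u$ is continuous across edges (being simplicial with matching traces) these face-wise Lipschitz bounds assemble into a global Lipschitz bound on the compact set $V$, with constant depending only on $V$ and $E(u)$.

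I expect the main obstacle to be the transition across the edge $e$: away from edges the result is the classical interior gradient estimate, but on the edge one must use the full force of the balancing formula $\mathrm{Im}\sum_j \varphi_j(iy) = 0$ from Theorem~\ref{3.9} to control the tangential derivatives $\partial f_j/\partial x$ simultaneously across all $N$ incident faces, since these are coupled by the gluing and cannot be bounded face-by-face. Verifying that the half-disc configuration satisfies the minimizing hypothesis of Theorem~\ref{3.10}, and that the subharmonicity argument genuinely extends across the singular edge, is where the care is required.
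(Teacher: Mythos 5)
Your overall strategy---center a Euclidean ball of radius $\rho$ at $q$ on the spine $\{is\mid s\ge1\}$, invoke Theorem~\ref{3.10} there, convert the Euclidean estimate to a hyperbolic one via the conformal factor $y^{-2}$, and then use compactness of $V$ to uniformize the constant and integrate along paths to get the Lipschitz bound---is the same as the paper's. But there is one genuine misstep in the middle: you apply Theorem~\ref{3.10} only \emph{at the center} $q$ and then try to transfer the bound to $\phi(p)$ by appealing to subharmonicity of the energy density, including ``across the singular edge.'' That cross-edge subharmonicity is nowhere established in this paper (Theorem~\ref{3.9} gives holomorphicity of the Hopf differentials and a balancing identity, not a differential inequality for $\sum_j\abs{\nabla f_j}^2$), so as written this step is an unproved claim, and you correctly sense it is the weak point. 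It is also unnecessary: Theorem~\ref{3.10} is already a \emph{pointwise} estimate at an arbitrary point $\bar x+i\bar y$ of the glued half-disc configuration, with $r$ equal to the Euclidean distance from that point to the boundary of the configuration. The paper simply applies it at $\phi(p)$ itself, using that the normalization hypotheses (left of $\{\tfrac12+is\}$, above $\{y=\tfrac{\sqrt3}{2}\}$) force $d_e(\phi(p),q)\le\sqrt{2-\sqrt3}<\rho=\tfrac{\sqrt5-1}{2}$, so $\phi(p)$ sits at a definite distance $\delta>0$ from $\partial B_\rho(q)$; this yields $\abs{\nabla f}^2_e(\phi(p))\le C\delta^{-2}E(u)$ directly, with no mean-value argument and no case split between interior and near-edge points (the edge-centered ball handles both).

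The rest of your argument is sound and matches the paper: the domain conversion costs exactly the factor $y^2$ (note the target norm in Theorem~\ref{3.10} is already the hyperbolic one, so only the domain metric needs converting), compactness of $V\cap T$ bounds $y$ from above to give $C_T$, and the face-wise density bounds assemble into a Lipschitz bound on $V$ because $u$ is continuous across edges. If you replace the subharmonicity detour by the direct application of Theorem~\ref{3.10} at $\phi(p)$, your proof is complete and coincides with the paper's.
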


\begin{proof}
	First note that for any $p\in T$, the map $\phi$ always exists, since the three possible horocyclic neighborhoods of the punctures that we might consider cover $\tilde{T}$ (see Figure~\ref{horneigh}).
	Let $d_e$ denote the Euclidean distance in $\tilde{T}$.
	
\begin{figure}[ht]
\centering
\includegraphics[width=0.25\textwidth]{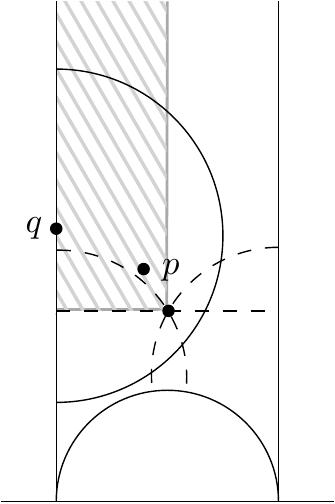}
\caption{Location of $\phi(p)$ and $q$ in $\phi(V\cap T)$}\label{horneigh}
\end{figure}
	
	Consider a ball $B$ around the point $\phi^{-1}(q)$ so that $\phi(B\cap T) = B_\rho(q)\cap\tilde{T}$, where $B_\rho(q)$ is the Euclidean ball. The Euclidean distance from $\phi(p)$ to $\partial B_\rho(q)$ is at least some positive distance $\delta$. Since $u$ is minimizing in $B$ we may apply Theorem~\ref{3.10} to obtain
	\[
	\abs{\nabla u}^2(p) = \abs{\dfrac{\partial u}{\partial x}}^2(p)+\abs{\dfrac{\partial u}{\partial y}}^2(p)	\leq C\int_B\abs{\nabla u}^2d\mu \le E(u).
	\]
	
	But this estimate is with respect to the Euclidean metric in $B_\rho(q)$. If $\phi(p) = x+iy$ in coordinates, then with respect to the hyperbolic metric we get
	\[
	\abs{\nabla u}^2(p) \le Cy^2E(u).
	\]
	Since $V$ is compact, there is a maximum value of $y$ that occurs for points in $V\cap T$. So there is a constant $C_T$ so that $\abs{\nabla u}^2\le C_TE(u)$ over all of $V\cap T$.
	
	Taking a maximum over all the faces of $X$, we see that $\abs{\nabla u}^2$ can be bounded on all of $V$ by some constant times the energy of $u$. Since the energy density dominates the partial derivatives, this gives a Lipschitz bound depending only on $V$ and the energy of $u$.
\end{proof}

\begin{remark}
	The same proof also hold for the energy minimizing map constructed in Theorem~\ref{dirichletD}.
\end{remark}

\subsection{Global Existence}\label{Sexistence}

Now we may prove the global existence theorem.

\begin{theorem}\label{harmonic}
    There exist energy minimizing mappings $u\in W^{1,2}$ and $u_D\in\overline{\mathcal{D}}$. That is,
    \[
    E(u) = \inf_{v\in W^{1,2}}E(v),
    \]
    and
    \[
    E(u_D) = \inf_{v\in\mathcal{D}}E(v).
    \]
    Moreover both $u$ and $u_D$ are homotopic to the function $H$ constructed in Theorem~\ref{finite energy}.
\end{theorem}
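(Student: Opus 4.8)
The plan is to produce the global minimizers by an exhaustion argument, anchoring a sequence of free-boundary Dirichlet problems to the map $H$ of Theorem~\ref{finite energy}. For each $n$ I apply Theorem~\ref{dirichletW} with $G=H$ and $X'=X_n$ to obtain $u_n\in W^{1,2}_{H,X_n}$, the energy minimizer among maps agreeing with $H$ on $\xs\setminus H^{-1}(X_n)$. Since $X_n\subset X_{n+1}$ gives $H^{-1}(X_n)\subset H^{-1}(X_{n+1})$, the map $u_n$ already agrees with $H$ off $H^{-1}(X_{n+1})$ and is therefore admissible for the $(n+1)$-st problem, so $E(u_{n+1})\le E(u_n)\le E(H)$. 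Hence $E(u_n)$ decreases to a limit $L\ge\inf_{W^{1,2}}E$. Running the identical construction with Theorem~\ref{dirichletD} and $G=H\in\mathcal D$ produces maps $u_{D,n}\in\overline{\mathcal D}_{H,X_n}$ that are harmonic diffeomorphisms on the interior of each face.

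To pass to the limit I would use the interior estimate rather than a global Poincar\'e argument, since the sets on which $u_n$ may differ from $H$ grow to fill $\xs$ and admit no uniform Poincar\'e constant. Fix a compact $V\subset\xs$; for $n$ large, $V$ lies in the interior of $H^{-1}(X_n)$, and Theorem~\ref{local lip} bounds the Lipschitz constant of $u_n\vert_V$ in terms of $V$ and $E(u_n)\le E(H)$ alone, uniformly in $n$. The family $\{u_n\vert_V\}$ is thus equicontinuous with values in a compact part of the target, so Arzel\`a--Ascoli together with a diagonal argument over an exhaustion extracts a subsequence $u_{n_k}$ converging locally uniformly on $\xs$ to a continuous simplicial map $u$. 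Lower semicontinuity of energy (as used in Theorem~\ref{dirichletW}) gives $E(u)\le\liminf_k E(u_{n_k})=L<\infty$, so $u\in W^{1,2}$; the same reasoning exhibits the limit $u_D$ as a weak $W^{1,2}$ limit of face-diffeomorphisms, hence $u_D\in\overline{\mathcal D}$.

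It remains to identify $L$ with the global infimum, which I expect to be the main obstacle. The inequality $L\ge\inf_{W^{1,2}}E$ is immediate from the nesting of admissible classes, so I must show $\inf_{W^{1,2}_{H,X_n}}E\to\inf_{W^{1,2}}E$. Given any $v\in W^{1,2}$ with $E(v)<\infty$ and any $\varepsilon>0$, I would splice $v$ to $H$ across a transition collar $H^{-1}(X_{n+1})\setminus H^{-1}(X_n)$ deep in the cusps, defining the spliced map $w$ by moving along the geodesic from $H(x)$ to $v(x)$ a fraction $\eta(x)$ of the way, where $\eta$ is a cutoff equal to $1$ on $H^{-1}(X_n)$ and $0$ off $H^{-1}(X_{n+1})$ (each face and edge being NPC, so these geodesics exist). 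Then $w=v$ on $H^{-1}(X_n)$ and $w=H$ off $H^{-1}(X_{n+1})$, so $w\in W^{1,2}_{H,X_{n+1}}$, and its energy on the collar is dominated by $\int_{\mathrm{collar}}\bigl(\abs{\nabla v}^2+\abs{\nabla H}^2\bigr)$ plus a cross term $\int_{\mathrm{collar}}\abs{\nabla\eta}^2\,d_\tau^2(v,H)$. The first two integrals are tails of convergent energies and vanish as $n\to\infty$; controlling the cross term is the hard part, since in a hyperbolic cusp one must widen the collar to make $\abs{\nabla\eta}$ small while keeping $d_\tau(v,H)$ bounded, using that both $v$ and $H$ respect the simplicial structure and send each horocyclic neighborhood of a vertex into the corresponding one. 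Granting this cusp estimate, $E(w)\le E(v)+\varepsilon$ for large $n$, whence $L\le E(v)$ and therefore $E(u)=\inf_{W^{1,2}}E$; the same splicing, carried out through diffeomorphisms, yields $E(u_D)=\inf_{\mathcal D}E$.

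Finally, the homotopy claim follows as in Proposition~\ref{homotopic}: $u$ and $u_D$ are continuous simplicial maps into faces and edges that are NPC, so the pointwise geodesic homotopy joining their values to those of $H$ is well defined and continuous, exhibiting $u\simeq H$ and $u_D\simeq H$. Alternatively, each $u_{n_k}$ is homotopic to $H$ by Proposition~\ref{homotopic}, and local uniform convergence makes $u$ geodesically homotopic to $u_{n_k}$ for large $k$.
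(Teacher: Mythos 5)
Your architecture is genuinely different from the paper's, and the difference is exactly where your gap lives. The paper does not anchor an exhaustion of Dirichlet problems to $H$; it starts from a sequence $\{u_k\}$ that is already minimizing over all of $W^{1,2}$ (resp.\ over $\mathcal{D}$), so that $E(u_k)\searrow\inf_{v\in A}E(v)$ by fiat, and uses the harmonic replacement of Theorems~\ref{dirichletW} and \ref{dirichletD} only to upgrade each $u_k$ to a map $v_k$ with $E(v_k)\le E(u_k)$ that is uniformly Lipschitz on each $X_\ell$ (Theorem~\ref{local lip}), hence amenable to Arzel\`a--Ascoli and a diagonal argument. Replacement can only decrease energy, so the replaced sequence is still globally minimizing, and lower semicontinuity closes the argument with no need to compare $\inf_{W^{1,2}_{H,X_n}}E$ with $\inf_{W^{1,2}}E$. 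In your scheme that comparison is unavoidable, and you correctly identify it as the main obstacle --- but you then write ``granting this cusp estimate,'' which leaves the crux unproved. The cross term $\int_{\mathrm{collar}}\abs{\nabla\eta}^2\,d_\tau^2(v,H)$ is not controlled by anything you have established: a general competitor $v\in W^{1,2}$ is only known to be $W^{1,2}$ on each face, so no a priori bound on $d_\tau(v,H)$ deep in a cusp is available (finite energy alone permits $v$ to lag unboundedly far behind $H$ along the cusp, e.g.\ by compressing the height coordinate by a slowly growing factor), and shrinking $\abs{\nabla\eta}$ by widening the collar does not by itself compensate. As written, the identification $L=\inf_{W^{1,2}}E$ is therefore not established, and the theorem is not proved.

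The $\mathcal{D}$ case has a second problem of the same kind: the geodesic interpolation $w$ between $v$ and $H$ on the collar has no reason to be a diffeomorphism on each face, so your spliced competitor need not lie in $\mathcal{D}$ or $\overline{\mathcal{D}}$, and ``the same splicing, carried out through diffeomorphisms'' is an assertion rather than a construction. Both difficulties evaporate if you adopt the paper's order of operations: minimize first over the whole class, replace second. The rest of your write-up --- the uniform Lipschitz bound on compacta from Theorem~\ref{local lip} with constant depending only on $E(H)$, Arzel\`a--Ascoli plus diagonalization, lower semicontinuity, and the geodesic homotopy to $H$ --- is sound and matches the paper; note only that the pointwise boundedness needed for Arzel\`a--Ascoli is not automatic from equicontinuity and is proved in the paper by combining the Lipschitz bound with the fact that each map sends a fixed point of an edge into that same edge, which pins the image away from the punctures.
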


\begin{proof}
    The proofs for $W^{1,2}$ and for $\mathcal{D}$ are essentially the same, so we will do them simultaneously. In the case of $\mathcal{D}$, if we choose a minimizing sequence $\{u_k\}$ in $\overline{\mathcal{D}}$  we can approximate by $v_k\in\mathcal{D}$, and since energy is lower semi-continuous this implies $\inf_{v\in\mathcal{D}}E(v) = \inf_{v\in\overline{\mathcal{D}}}E(v)$. So when we take a minimizing sequence in a moment, we may take the sequence in $\mathcal{D}$.
    
    Let $A$ be either the space $W^{1,2}$ or $\mathcal{D}$, and let $\{u_k\}\subset A$ be an energy minimizing sequence. That is,
    \[
    E(u_k)\searrow \inf_{v\in A}E(v).
    \]
    Each $u_k$ has energy bounded by $E(u_1)$, and by the argument in Proposition~\ref{homotopic} each $u_k$ is homotopic to $H$.
    
    Let $X_n$ be a compact exhaustion of $(\xs,\sigma)$ and $X'_n$ a compact exhaustion of $(\xs,\tau)$ as described in the beginning of this section. In particular, for each face $T$ of $X$, $X'_n\cap T$ should be convex with smooth boundary in $T$ with respect to the metric $\tau$. We will do a harmonic replacement on the sequence $u_k$ using Theorems~\ref{dirichletW} and \ref{dirichletD}.
    
    Fix a map $u_k$ in the minimizing sequence to play the role of $G$ in the statement of either Theorem~\ref{dirichletW} or Theorem~\ref{dirichletD}. Choose $m = m(k)$ large enough so that $K_k\subset u_k^{-1}(X'_m)$ can play the role of $V$ in Theorem~\ref{local lip}. For $G = u_k$ and $X' = X'_m$, let $v_k$ be the map produced by Theorem~\ref{dirichletW} if $A = W^{1,2}$, or by Theorem~\ref{dirichletD} if $A = \mathcal{D}$. Since $v_k$ solves a Dirichlet problem where it doesn't coincide with $u_k$, $E(v_k)\le E(u_k)$ and so $v_k$ is also a minimizing sequence, and still homotopic to $H$ by Proposition~\ref{homotopic}.
    
    In order to show existence of an energy minimizing map in $W^{1,2}$ we'll use the Arzel\`a-Ascoli Theorem and a diagonal argument. Arzel\`a-Ascoli states that a sequence of maps from a compact metric space to a complete metric space is relatively compact by the uniform norm if and only if the sequence is equicontinuous and pointwise bounded. We first establish equicontinuity.
    
    Since each $v_k$ is energy minimizing in $X_k$ (with respect to its boundary values and the class of maps $A$), it is certainly energy minimizing in $X_1$. The set $X_1\subset X_k\subset u_k^{-1}(X'_m(k))$ is small enough to play the role of $V$ in Theorem~\ref{local lip}, so we apply this Theorem to show that $v_k$ is Lipschitz continuous in $X_1$, with Lipschitz constant depending only on $X_1$ and the energy of $v_k$, which is bounded by $E(u_1)$. Hence the sequence $v_k$ is uniformly (in k) Lipschitz inside of $X_1$, and hence equicontinuous in $X_1$.
    
    To see that $\{v_k\}$ is pointwise bounded in $X_1$ we must show for every $x_0\in X_1$ that there exists $y_0\in \xs$ and $R>0$ independent of $k$ so that $d_\tau(v_k(x_0),y_0)\leq R$. Suppose for contradiction that for some $x_0\in X_1$ the sequence $\{v_k(x_0)\}$ is not pointwise bounded, so there is a subsequence $\{v_{k_\ell}(x_0)\}$ such that $v_{k_\ell}(x_0)$ escapes to a punctured vertex of $X$. The point $x_0$ lies in some face $T$, which we identify with the ideal hyperbolic triangle via an isometry $\phi:T\to\tilde{T}$. Without loss of generality we may assume that $\phi(v_{k_\ell}(x_0))$ escapes to the point at $\infty$.
	
	Let $e$ be the edge opposite $\infty$, the hyperbolic geodesic joining $0$ and $1$. Fix $x_1\in \phi^{-1}(e)\cap X_1$. Since the sequence $v_{k_\ell}$ is uniformly Lipschitz continuous in $X_1$ we have
	\begin{eqnarray*}
	d(\phi(v_{k_\ell}(x_0)),e) & = & \inf_{y\in e}d(\phi(v_{k_\ell}(x_0)),y)\\
	 & \le & d_\tau(\phi(v_{k_\ell}(x_0)),\phi(v_{k_\ell}(x_1)))\\
	 &\le & C d_\sigma(x_0,x_1).
	\end{eqnarray*}
	Here we have used $d_\alpha$ to denote the hyperbolic distance function $T$ with respect to a metric $\alpha = \sigma,\tau$. Since $d(\phi(v_{k_\ell}(x_0)),e)$ is uniformly bounded in $\ell$, it cannot have escaped to $\infty$ as we supposed, a contradiction. Hence the sequence $\{v_k\}_k$ is pointwise bounded in $X_1$ as desired.
	
	Now we know $\{v_k\}\subset A$ is equicontinuous and pointwise bounded in $X_1$.
	By the Arzel\`a-Ascoli Theorem, there exist a subsequence $\{v_{1,k}\}$ that converges uniformly on compact sets of $X_1$.
	Repeating the argument and construction above we find that there exists a further subsequence $\{v_{2,k}\}$ that converges uniformly on compact subsets of $X_2$. And in fact, for each $\ell$, we find a subsequence $\{v_{\ell,k}\}_k$ of $\{v_{\ell-1,k}\}_k$ that converges uniformly on compact subsets of $X_\ell$.
	We take the diagonal subsequence $\{v_{k,k}\}$, and this converges uniformly on all compact subsets of $\xs$ to a continuous function $u\in\overline{A}$.
	Since the convergence is uniform in compact subsets of $\xs$, $u$ is also homotopic to $H$.
	
	Finally, note that by lower semi-continuity of the energy, 
	\[
	E(u)\leq\lim_{k\to\infty}E(u_k) = \inf_{v\in A}E(v).
	\]
	Thus $E(u) = \inf_{v\in A}E(v)$ as desired.
\end{proof}

Although, the analytic techniques of both \cite{SY} and \cite{JS} appear too sensitive to apply near the noncompact punctures and singular edge of $\xs$, we nonetheless believe that the analogous statement about harmonic diffeomorphisms holds in this setting.

\begin{conjecture}
The maps $u\in W^{1,2}$ and $u_D\in\overline{\mathcal{D}}$ coincide. In other words, the restriction $u\vert_T$ of $u$ to any face $T$ of $X$ is a diffeomorphism.
\end{conjecture}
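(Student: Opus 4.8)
The plan is to adapt the strategy of Schoen--Yau \cite{SY} and Jost--Schoen \cite{JS}: rather than comparing the two minimizers directly, I would show that the \emph{unconstrained} minimizer $u\in W^{1,2}$ is automatically a diffeomorphism on each face, so that $u\in\overline{\mathcal{D}}$. Granting this, the containment $\overline{\mathcal{D}}\subset W^{1,2}$ together with the identity $\inf_{\mathcal{D}}E=\inf_{\overline{\mathcal{D}}}E$ established in the proof of Theorem~\ref{harmonic} gives $E(u_D)\le E(u)\le E(u_D)$, hence $E(u)=E(u_D)$. Uniqueness of the energy minimizer then forces $u=u_D$: any two minimizers in the same homotopy class can be joined by the geodesic homotopy of Proposition~\ref{homotopic}, along which the energy is strictly convex because the target is NPC, so equality of energies collapses the homotopy to a single map (using that both maps share the same model behavior at the cusps).

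The first step is interior regularity on a single face $T$. By Theorem~\ref{interior smooth} the restriction $u|_T$ is an analytic harmonic map, and by Theorem~\ref{3.9} its Hopf differential $\varphi_j$ is holomorphic. Writing $H=|\partial f_j|^2$ and $L=|\bar\partial f_j|^2$ for the holomorphic and antiholomorphic energy densities, so that the Jacobian is $J=H-L$, I would invoke the Bochner formulas of Sampson and Schoen--Yau: since the target is hyperbolic with curvature $-1$, these formulas force the zeros of $J$ to be isolated and of negative index, with $u|_T$ modelled near such a point on a branched cover $z\mapsto z^k$. The degree-one property (Theorem~\ref{degree}) controls the total index via a Riemann--Hurwitz-type count, which for a degree-one map leaves no room for branch points, so $J>0$ throughout the interior and $u|_T$ is an interior local diffeomorphism.

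The genuine obstacle, and the reason the statement is only a conjecture, lies along the singular edges. Near an edge $e$ incident to $N\ge3$ faces $T_1,\dots,T_N$, the minimizer $u$ is known only to send $e$ to $e$ and each $T_j$ to $T_j$; a priori the boundary map $u|_e$ need not be monotone, so the edge could fold back on itself. The structural inputs available are the balancing formula $\mathrm{Im}\sum_{j=1}^N\varphi_j(iy)=0$ of Theorem~\ref{3.9} and the local CAT(-1) geometry of Proposition~\ref{cat-1}. What is required is a local analogue of Proposition~1 of \cite{JS} for the multi-sheeted edge: that an energy-minimizing simplicial map on a half-disc neighborhood of a point of $e$, with homeomorphic boundary trace on $\partial B_r(p_0)$, maps $e$ homeomorphically and restricts to a diffeomorphism on each half-disc. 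The Jost--Schoen argument exploits a single convex planar target with one monotone boundary identification, whereas here three or more sheets share the edge and the natural energy-decreasing comparison maps are no longer available; I expect the heart of any proof to be a combination of the balancing formula---to preclude simultaneous folding across all sheets---with a Hopf-lemma analysis of $J$ up to the edge.

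Finally one must treat the cusps, where the authors already note that the Schoen--Yau and Jost--Schoen machinery is ``too sensitive.'' Using the boundary analyticity of Theorem~\ref{analytic}, I would compare $u$ near each puncture with the explicit model $H_v$ of Theorem~\ref{finite energy}, which is a diffeomorphism on each face, and show the two agree to leading asymptotic order; this requires a separate decay estimate rather than the compact-surface machinery. Granting the edge statement and the cusp asymptotics, the interior, edge, and puncture pieces assemble to show $u|_T$ is a diffeomorphism on each closed face, placing $u$ in $\overline{\mathcal{D}}$ and completing the identification $u=u_D$.
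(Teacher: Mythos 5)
First, note that the statement you were asked to prove appears in the paper only as a \emph{Conjecture}: the authors explicitly decline to prove it, remarking that the techniques of \cite{SY} and \cite{JS} ``appear too sensitive to apply near the noncompact punctures and singular edge'' of $X\backslash S$. So there is no proof in the paper to compare against, and the real question is whether your proposal closes the gap the authors left open. It does not: it is a reasonable research plan whose load-bearing steps are asserted rather than proved. (i) The interior step is not as routine as you present it. The Schoen--Yau/Jost--Schoen elimination of branch points via an index count on the zeros of the Jacobian is a global argument on a \emph{closed} surface; each face here is a noncompact ideal triangle with boundary, the degree-one statement of Theorem~\ref{degree} concerns the whole complex rather than $u\vert_T$ as a self-map of $T$, and your ``Riemann--Hurwitz-type count'' has uncontrolled contributions from the edges and the cusps. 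You would need to rule out zeros of $J$ accumulating at the boundary or escaping to the punctures before any such count can start. (ii) The edge step --- a local analogue of Proposition 1 of \cite{JS} for $N\ge3$ half-discs glued along a geodesic --- is exactly the ingredient the authors identify as missing; you correctly name it but only describe what you ``expect'' a proof to look like. The balancing formula of Theorem~\ref{3.9} constrains only the \emph{sum} of the normal derivatives over the sheets, so by itself it does not prevent $u\vert_e$ from folding or an individual sheet from degenerating. (iii) The cusp step likewise requires a decay estimate you do not supply; Theorem~\ref{analytic} gives analyticity up to the open edges, not asymptotics at the ideal vertices.

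Even the reduction at the top of your argument has a gap. Granting $u\in\overline{\mathcal{D}}$ and hence $E(u)=E(u_D)$, you conclude $u=u_D$ from uniqueness of the minimizer via strict convexity of $t\mapsto E(u_t)$ along the geodesic homotopy. On a noncompact domain that convexity inequality carries boundary terms at infinity, and even on compact negatively curved targets two homotopic harmonic maps can differ when the image degenerates onto a geodesic; the paper never claims uniqueness of its minimizers, and your parenthetical ``using that both maps share the same model behavior at the cusps'' is precisely an unproved hypothesis. In short, your diagnosis of where the difficulty lies agrees with the authors' own, but none of the identified difficulties is resolved, so the statement remains a conjecture.
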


\section{Properties of minimizing maps}\label{SProp}

In this section we will collect various properties of the minimizing map we constructed in Section~\ref{Sexistence}. First, we see that our map is locally Lipschitz. The same proof as for Theorem~\ref{local lip} proves the following:

\begin{theorem}\label{global lip}
	Let $u\in W^{1,2}$ and $u_D\in\overline{\mathcal{D}}$ be the energy minimizing maps constructed in Theorem~\ref{harmonic}.
	Then in any compact subset $V$ of $\xs$ there is a constant $C$ depending only on $V$ so that
	\[
	\abs{\nabla u}^2(p)\leq CE(u)\quad\text{ and }\quad\abs{\nabla u_D}^2(p)\leq CE(u_D)
	\]
	in $V$. As a result $u$ and $u_D$ are locally Lipschitz continuous, with Lipschitz constant at a point $p\in \xs$ depending only on the energy of $u$ (resp. $u_D$) and the distance of $p$ from the center of any face in which $p$ lies.
\end{theorem}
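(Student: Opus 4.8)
The plan is to follow the proof of Theorem~\ref{local lip} essentially verbatim, the only new ingredient being that the global minimality of $u$ and $u_D$ replaces the hypothesis that $V$ lie inside a fixed Dirichlet region. First I would observe that, since $u$ minimizes energy among all maps in $W^{1,2}$ (and $u_D$ among all maps in $\overline{\mathcal{D}}$), the restriction of $u$ (resp. $u_D$) to any ball $B_r(p_0)\subset\xs$ of the two types described just before Theorem~\ref{3.9} is energy minimizing with respect to its own trace on $\partial B_r(p_0)$ and the corresponding class of simplicial maps. Consequently Theorems~\ref{3.9} and \ref{3.10} apply on every such ball contained in $\xs$; for $u_D$ this uses the Remark that both results remain valid when one minimizes among diffeomorphisms of the faces.

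Next, fix a compact $V\subset\xs$ and a point $p\in V$ lying in a face $T$ (the case $p\in e$ being handled identically using the local model of the edge and the half-disc balls). Exactly as in Theorem~\ref{local lip}, I would choose an isometry $\phi:T\to\tilde{T}$ placing $\phi(p)$ to the left of $\{\tfrac12+is : s>0\}$ and above the horocycle $\{s+i\tfrac{\sqrt3}{2} : 0\le s\le 1\}$, let $q=q(p)$ be the nearest point of $\{is : s\ge 1\}$ to $\phi(p)$, and set $\rho=\frac{\sqrt5-1}{2}$. Because $V$ is a compact subset of $\xs$, its points are bounded away from the punctures, so the Euclidean ball $B_\rho(q)\cap\tilde{T}$ pulls back under $\phi^{-1}$ to a ball lying in $\xs$; this is exactly what lets us dispense with the auxiliary set $G^{-1}(X')$. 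Applying Theorem~\ref{3.10} on this ball bounds the Euclidean energy density at $p$ by $C\,E(u)$, where $C$ depends only on the number $N$ of faces at the relevant edge (finite, by local finiteness) and on the distance $\delta$ from $\phi(p)$ to $\partial B_\rho(q)$.

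Finally I would convert to the hyperbolic metric. Writing $\phi(p)=x+iy$, the domain conformal factor contributes $g^{\alpha\beta}=y^2\delta^{\alpha\beta}$, giving $\abs{\nabla u}^2(p)\le C\,y^2\,E(u)$ with respect to the hyperbolic metric on $T$. In the chosen coordinate the height $y$ is comparable to $e^{d}$, where $d$ is the hyperbolic distance from $p$ to the center of $T$, so the bound depends only on $E(u)$ and on this distance, as asserted. Over the compact set $V$ the height $y$ is bounded, so taking a maximum over the finitely many faces produces a single constant $C=C(V)$ with $\abs{\nabla u}^2\le C\,E(u)$ on $V$, and the identical argument gives $\abs{\nabla u_D}^2\le C\,E(u_D)$. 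Since the energy density dominates the squared partial derivatives, integrating along paths converts this gradient bound into the claimed local Lipschitz estimate.

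I expect the only genuine point requiring care — and the main difference from Theorem~\ref{local lip} — to be the first step: verifying that the global minimizers are minimizing in the precise local sense demanded by Theorems~\ref{3.9} and \ref{3.10} (same trace on $\partial B_r(p_0)$, same simplicial or diffeomorphism class), together with checking that the pulled-back balls $B_\rho(q)$ genuinely avoid the punctures uniformly over $V$. Once these are in place, the remaining estimates are precisely the computation already carried out in the proof of Theorem~\ref{local lip}.
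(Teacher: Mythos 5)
Your proposal is correct and follows exactly the route the paper takes: the paper gives no separate argument for Theorem~\ref{global lip}, stating only that ``the same proof as for Theorem~\ref{local lip}'' applies, which is precisely your plan of replacing the Dirichlet-region hypothesis by the observation that a global minimizer is locally minimizing on every ball of the two admissible types, so that Theorems~\ref{3.9} and~\ref{3.10} apply. Your added remarks on the dependence of the constant on the height $y$ (equivalently, the distance to the center of the face) and on the balls staying inside $\xs$ are consistent with the paper's statement and proof of Theorem~\ref{local lip}.
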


\begin{remark}\label{euclid lip}
    For a face $T$ of $X$ and an isometry $\phi:T\to\tilde{T}$, let $f = \phi\circ u\circ\phi^{-1}$ represent $u\vert_T$ in coordinates. Then in the set $\{z\in\tilde{T}\mid Im(z)\geq 1\}$, the proof of Theorem~\ref{local lip} witnesses that the Euclidean norm of $\nabla f$, denoted $\abs{\nabla f}^2_e$ satisfies
    \[
    \abs{\nabla f}^2_e \le CE(u).
    \]
    The same is true replacing the map $u$ by the map $u_D$.
\end{remark}

In fact, these maps are much more regular in the interior of each face.

\begin{theorem}\label{interior smooth}
    Let $u\in W^{1,2}$ and $u_D\in\overline{\mathcal{D}}$ be the energy minimizing maps constructed in Theorem~\ref{harmonic}. For any face $T$ of $X$, the restrictions $u\vert_T$ and $u_D\vert_T$ to the interior of $T$ are analytic harmonic maps. Moreover the map $u_D$ is a diffeomorphism on the interior of each face of $X$.
\end{theorem}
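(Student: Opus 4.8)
The plan is to exploit the fact that, in the interior of a face $T$, the target metric is that of an ideal hyperbolic triangle, i.e. a smooth (indeed real-analytic) negatively curved Riemannian surface. Thus in the interior we are dealing with an energy minimizing map between smooth manifolds, and we may bring the classical interior regularity theory to bear.

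First I would establish harmonicity and analyticity. Since $u$ (resp. $u_D$) is globally energy minimizing in its class, its restriction to any ball $B$ compactly contained in the interior of a face is energy minimizing with respect to its own boundary values on $\partial B$, because any interior competitor is admissible: it does not disturb the simplicial structure. A locally energy minimizing map from a $2$-dimensional domain into a smooth Riemannian manifold satisfies the harmonic map system weakly, and by the interior regularity theory (Morrey; Schoen--Uhlenbeck) is smooth in the interior; because the hyperbolic metric is real-analytic, Morrey's analyticity theorem for analytic elliptic systems upgrades this to real-analyticity. Alternatively one may argue directly from the construction: $u$ is the uniform-on-compacta limit of the harmonic replacements $v_k$, which by Theorems~\ref{dirichletW} and \ref{dirichletD} are harmonic on $T\cap u_k^{-1}(X'_{m(k)})$, and these sets exhaust the interior of $T$. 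Combining the uniform energy bound with the interior Lipschitz estimate of Theorem~\ref{global lip} and the standard $C^\infty$ interior estimates for harmonic maps yields smooth convergence, so the limit is harmonic, and again analytic.

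For the diffeomorphism statement I would track the diffeomorphism property through the limit. By Theorem~\ref{dirichletD} each replacement $v_k$ is a harmonic diffeomorphism on the interior of $T\cap u_k^{-1}(X'_{m(k)})$. Fix a ball $B$ compactly contained in the interior of $T$; for large $k$ the maps $v_k|_B$ are harmonic diffeomorphisms converging smoothly to $u_D|_B$, and their boundary values on $\partial B$ are monotone. Monotonicity passes to the smooth limit, so $u_D|_B$ is harmonic with respect to monotone boundary values onto the image curve. Exactly as in the conclusion of Theorem~\ref{dirichletD}, I would then invoke the diffeomorphism criterion (Theorem 5.1.1 of \cite{J}, in the spirit of Jost--Schoen and Schoen--Yau) to conclude that $u_D|_B$ is a diffeomorphism; since $B$ is arbitrary this produces a local diffeomorphism on the interior of $T$, and the global statement on the face follows since $u_D$ is a limit of global diffeomorphisms of $T$.

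The main obstacle is ensuring that the diffeomorphism property is not lost in the limit, i.e. that the Jacobian of $u_D$ does not vanish at interior points. A priori a limit of diffeomorphisms may fold, and the monotone boundary data could degenerate by collapsing a boundary arc. The resolution rests on the special structure of harmonic maps into negatively curved surfaces: by Theorem~\ref{3.9} the Hopf differential of $u_D|_T$ is holomorphic, and a Sampson/Schoen--Yau index argument shows that the Jacobian of a harmonic map into a negatively curved surface cannot have isolated interior zeros unless it vanishes identically. Since $u_D$ is homotopic to $H$, which is a diffeomorphism on each face, the limit is non-degenerate and the Jacobian is strictly positive; one must additionally use the uniform Lipschitz bound of Theorem~\ref{global lip} to rule out collapse of the boundary values, thereby securing genuine monotonicity in the limit.
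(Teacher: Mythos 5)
Your proposal is correct and follows essentially the same route as the paper: interior harmonicity and analyticity come from the fact that $u$ and $u_D$ minimize on any disc compactly contained in a face with respect to their own (Lipschitz) boundary values, together with Morrey's analyticity theorem, and the diffeomorphism property of $u_D$ comes from viewing it as a locally uniform limit of the harmonic diffeomorphisms $v_k$ and invoking the Jost--Schoen non-degeneracy argument plus the degree-1 statement of Theorem~\ref{degree}. The extra detail you supply on why the limit cannot fold (holomorphic Hopf differential and the Schoen--Yau/Sampson analysis of Jacobian zeros) is exactly what the paper delegates to its citation of \cite{JS}.
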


\begin{proof}
    Fix a face $T$ and an isometry $\phi:T\to\tilde{T}$, and let $f,f_D$ represent $u,u_D$ in these coordinates. For any disc $D$ in the interior of $\tilde{T}$, the maps $f$ and $f_D$ are both in Lipschitz (c.f. Theorem~\ref{global lip}), so they have well defined boundary values on $\partial D$. Since both $f$ and $f_D$ minimize energy in $D$ with respect to their own boundary values, they must be harmonic in $D$. It is well known that harmonic maps between analytic manifolds are analytic in the interior, see for instance \cite{M}, Theorem 6.8.1.
    
    To show the map $f_D$ is a diffeomorphism in the interior of $\tilde{T}$ we argue as in \cite{JS}, Theorem 1. Consider the minimizing sequence $v_k$ constructed in the proof of Theorem~\ref{harmonic}, after the harmonic replacement has been performed. let $f_k:\tilde{D}\to\tilde{T}.$ represent the map $v_k\vert_T$. Fix a disc $D$ in the interior of $\tilde{T}$, and fix $k_0$ so that $D\subset X_{k_0}$. Since each $f_k$ is a harmonic diffeomorphism in $D$ for $k>k_0$, Since the convergence of $f_k$ to $f_D$ is uniform on $D$, the limit is also a harmonic diffeomorphism inside $D$. Now $f_D$ is a local diffeomorphism, and combining this with the result below (Theorem~\ref{degree}) that $f_D$ has degree 1, $f_D$ is a diffeomorphism on $\tilde{T}$.
\end{proof}

We now collect some topological properties satisfied by both $u$ and $u_D$, namely that these maps are proper and have degree 1. First, though, we state the following version of the Courant-Lebesgue Lemma, the proof of which can  be found in \cite{J}, Lemma 3.1.1.

\begin{lemma}\label{courantlebesgue}
    Let $D$ be the unit disc in the plane, $x_0\in D$, $N$ a Riemannian manifold with distance function $d$, and let $\delta<1$.
	Let $u\in W^{1,2}(D, N)$.
	Suppose $\partial B(x_0, r)\cap D$ is connected for all $r\in (\delta, \sqrt{\delta})$.
	Then there exists some $r\in(\delta, \sqrt{\delta})$ for which $u\vert_{\partial B(x_0, r)\cap D}$ is absolutely continuous and 
	\[
	d(u(x_1), u(x_2))\leq\left(\dfrac{8\pi K}{\log 1/\delta}\right)^{1/2},
	\]
	for all $x_1, x_2\in\partial B(x_0, r)\cap D$.
\end{lemma}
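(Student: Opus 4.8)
The plan is to run the classical slicing argument of Courant and Lebesgue in polar coordinates centered at $x_0$, extracting a good radius by a mean-value (pigeonhole) estimate on the angular part of the energy. Writing a point of $D$ near $x_0$ as $x_0 + r(\cos\theta,\sin\theta)$, the energy density splits as $|\nabla u|^2 = |\partial_r u|^2 + r^{-2}|\partial_\theta u|^2$, while $dx = r\,dr\,d\theta$. Discarding the nonnegative radial term and restricting to the annulus $\delta < r < \sqrt\delta$, the energy bound $K\geq E(u)$ (with the normalization $E(u)=\tfrac12\int|\nabla u|^2$ used in \cite{J}) gives
\[
2K \;\geq\; \int_\delta^{\sqrt\delta}\frac{1}{r}\left(\int |\partial_\theta u|^2\,d\theta\right)dr \;=\; \int_\delta^{\sqrt\delta}\frac{\Phi(r)}{r}\,dr,
\]
where $\Phi(r) := \int_{\partial B(x_0,r)\cap D}|\partial_\theta u|^2\,d\theta$ is the angular energy of the circular slice at radius $r$.

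Next I would select the radius. Since $\int_\delta^{\sqrt\delta} r^{-1}\,dr = \tfrac12\log(1/\delta)$, were $\Phi(r)$ to exceed $4K/\log(1/\delta)$ for every $r$ in the annulus, the displayed integral would exceed $2K$, a contradiction; hence there exists $r\in(\delta,\sqrt\delta)$ with $\Phi(r)\leq 4K/\log(1/\delta)$. Moreover, finiteness of $\int_\delta^{\sqrt\delta} r^{-1}\Phi(r)\,dr$ forces $\Phi(r)<\infty$ for almost every $r$, and for such $r$ the restriction $u\vert_{\partial B(x_0,r)\cap D}$, viewed as a function of the single variable $\theta$ with square-integrable derivative, lies in $W^{1,2}$ of a one-dimensional domain and is therefore absolutely continuous. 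Choosing $r$ in the intersection of the large-measure set of admissible radii with the full-measure set on which the slice is absolutely continuous produces a radius good for both purposes simultaneously.

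Finally I would convert the angular-energy bound into the pointwise distance estimate. Given $x_1,x_2\in\partial B(x_0,r)\cap D$, the hypothesis that this set is connected allows me to join them by a circular arc lying in $D$; since $u$ is absolutely continuous along it, $d(u(x_1),u(x_2))$ is at most the length of the image curve, namely $\int|\partial_\theta u|\,d\theta$ over the arc. Applying Cauchy--Schwarz against the arc length (at most $2\pi$) yields
\[
d(u(x_1),u(x_2)) \;\leq\; \left(2\pi\,\Phi(r)\right)^{1/2} \;\leq\; \left(\frac{8\pi K}{\log(1/\delta)}\right)^{1/2},
\]
which is the stated bound.

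I expect the only delicate point to be the measure-theoretic bookkeeping in the selection step: one must justify, via Fubini, that almost every circular slice of a $W^{1,2}$ map is itself a one-dimensional $W^{1,2}$ (hence absolutely continuous) function, and then argue that the ``small angular energy'' radii and the ``absolutely continuous slice'' radii can be chosen to coincide. The connectivity hypothesis on $\partial B(x_0,r)\cap D$ is precisely what legitimizes the closing arc-length comparison, since otherwise $x_1$ and $x_2$ need not be joinable within the domain, and the tangential integral of $|\partial_\theta u|$ along an arc in $D$ would no longer control $d(u(x_1),u(x_2))$.
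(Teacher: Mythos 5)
Your argument is correct and is precisely the classical Courant--Lebesgue slicing proof; the paper does not prove this lemma itself but refers to Lemma 3.1.1 of \cite{J}, whose proof is the same polar-coordinate pigeonhole argument you give, including the careful tracking of the factor $\tfrac12$ in the energy normalization that produces the constant $8\pi$. Nothing further is needed.
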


    Before we can show that the maps $u,u_D$ are proper on the whole complex, we first show they are proper on each edge of the complex.

\begin{proposition}\label{properedge}
    Let $u\in W^{1,2}$ and $u_D\in\overline{\mathcal{D}}$ be the energy minimizing maps constructed in Theorem~\ref{harmonic}. For each edge $e$ of $X$, both $u\vert_{e\backslash S}$ and $u_D\vert_{e\backslash S}$ are proper and surjective.
\end{proposition}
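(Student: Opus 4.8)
The plan is to identify each open edge $e\backslash S$, with its complete hyperbolic metric, isometrically with $\R$, so that $u\vert_{e\backslash S}$ (and likewise $u_D\vert_{e\backslash S}$) becomes a continuous map $\R\to\R$ whose image again lies in $e\backslash S\cong\R$; continuity here comes from Theorem~\ref{harmonic}. The two ends of $e\backslash S$ correspond to approaching the two vertices $v_1,v_2$ of $e$. The proposition will follow once I establish two facts: (a) as $x$ approaches either vertex along $e$, the image $u(x)$ converges to a vertex of the target edge (properness at each end), and (b) the two ends are sent to the two \emph{distinct} vertices. Granting (a) and (b), the image of the open interval is a connected set accumulating at both target vertices, so by the intermediate value theorem $u\vert_{e\backslash S}$ is onto $e\backslash S$; and properness is then immediate, since the preimage of a compact subset of $e\backslash S$ is bounded away from both ends.

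For (a) I would first show that $u$ extends continuously to each vertex with a well-defined limit. Fix $v_1$, pass to a local model, and use the coordinate $\zeta=e^{2\pi i z}$ to send a horocyclic neighborhood to a punctured disc with the cusp at $\zeta=0$. Since $u$ has finite energy (Theorem~\ref{harmonic}), the energy $K_\delta$ in the subdisc $\{\abs{\zeta}<\sqrt\delta\}$ tends to $0$ as $\delta\to0$. Applying the Courant--Lebesgue Lemma~\ref{courantlebesgue} on these subdiscs yields radii $r(\delta)\in(\delta,\sqrt\delta)$ for which the oscillation of $u$ along the loop $\partial B(0,r(\delta))$ is at most $(8\pi K_\delta/\log(1/\delta))^{1/2}\to0$. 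Because the relevant target neighborhoods are CAT($-1$) (Proposition~\ref{cat-1}) and $u$ is energy minimizing, the convex-hull property confines the image of the cusp region enclosed by each loop to a ball whose radius is controlled by this oscillation. As the loops shrink to the cusp these balls shrink to a single point, so $u$ extends continuously over $v_1$ with some value $P$. The same analysis applies at $v_2$, and verbatim to $u_D$.

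The hard part is to prove that the limit $P$ is a \emph{vertex} of the target rather than an interior edge point, and to identify which vertex. Here I would invoke the homotopy from $u$ to the reference map $H$ of Theorem~\ref{finite energy}, provided by Proposition~\ref{homotopic}. The crux is that a small loop $\gamma$ encircling the cusp $v_1$ is homotopically nontrivial in $\xs$ (the metric is complete, so the vertex is a genuine cusp and $\gamma$ winds essentially around the corresponding end). If $P$ were an interior point of a target edge, then by the continuous extension established above $u\circ\gamma$ would lie in an arbitrarily small ball about $P$ and hence be nullhomotopic in $\xs$, since a neighborhood of an interior edge point is contractible. This would force $H\circ\gamma$ to be nullhomotopic as well, contradicting that $H$ is a diffeomorphism near $v_1$ sending $\gamma$ to a generator winding once around the target cusp. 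Thus $P$ is a vertex, and the same degree-one winding correspondence, matched against $H$ (which sends the $v_i$-end of $e$ to $v_i$), shows that $u$ sends the $v_i$-end to $v_i$, giving (b). I expect this homotopy-theoretic identification of the limiting vertex, and in particular the essentiality of the cusp loop in $\xs$, to be the main obstacle and the step requiring the most care.

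Finally, combining (a) and (b) with the intermediate value argument of the first paragraph shows that $u\vert_{e\backslash S}$ is both surjective onto $e\backslash S$ and proper. The identical argument, using the same regularity and homotopy inputs, applies to $u_D\vert_{e\backslash S}$, completing the proof.
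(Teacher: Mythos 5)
There is a genuine gap, and it sits exactly where you place the weight of your argument. In step (a) you invoke a convex-hull (maximum-principle) property to confine the image of the cusp region enclosed by a Courant--Lebesgue loop to a ball whose radius is controlled by the oscillation on that loop. This is not available here. The standard proof of that property replaces $u$ by $\pi_C\circ u$, the nearest-point projection onto the convex set $C$; but the minimization in Theorem~\ref{harmonic} is constrained to \emph{simplicial} maps (and, for $u_D$, to $\overline{\mathcal{D}}$), and $\pi_C\circ u$ will in general fail to map each face to itself and each edge to itself, so it is not an admissible competitor. (The target is also only locally CAT($-1$), and you have no a priori control keeping the image of the cusp region inside one of the convex neighborhoods of Proposition~\ref{cat-1}.) Worse, if the confinement did hold, your own argument would disprove the proposition: the nested images would be closed sets of diameter tending to zero in the \emph{complete} space $(\xs,\tau)$, so by Cantor's intersection theorem the limit $P$ would be an honest point of $\xs$, i.e.\ $u$ would extend continuously over the puncture and fail to be proper. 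Your step (b) is meant to exclude this, but it rests on (a), and in any case the homotopy argument you sketch is the surface picture: for a general $2$-complex the ``loop around the cusp'' lives in the link $\Gamma(v)$, which is a graph, and there is no single winding generator nor an automatic reason such a loop is essential in $\pi_1(\xs)$. You correctly flag this as the main obstacle, but it is not resolved.

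The paper's proof avoids both issues and is worth comparing. For $p_n=iy_n$ escaping along $e$ to the vertex at $\infty$, it applies Courant--Lebesgue to the Euclidean disc of radius $1$ centered at $\tfrac12+iy_n$, producing a circle of radius $r_n\in(\tfrac12,\sqrt{1/2})$ that meets \emph{both} vertical lines $\{x=0\}$ and $\{x=1\}$; since the total energy is finite, the energy in these discs tends to $0$, so the images of the two intersection points become arbitrarily close. Because $u$ is simplicial, those images lie on the two disjoint geodesics asymptotic only at $\infty$, which forces both to escape to $\infty$. The Euclidean gradient bound of Remark~\ref{euclid lip} then transfers this to $p_n$ itself, since $d_e(z_n,z_n^0)\le\tfrac12$. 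No continuous extension over the puncture, no convexity of the image, and no homotopy theory are needed; the only inputs are finiteness of energy, the simplicial constraint, and the interior Lipschitz estimate. If you want to salvage your outline, you should replace (a) and (b) by an argument of this type that uses the simplicial structure to pin down where the image of the cusp region can go.
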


\begin{proof}
	We will show that if a sequence of points on an edge escapes to a puncture, then the image of that sequence escapes to the same puncture. We will state the proof for the map $u$, but the same argument holds, word for word, for the map $u_D$.
	
	Fix an edge $e$ of $X$ and let $p_n\in e$ be a sequence escaping to a punctured vertex $v$.
	Choose a face $T$ of $X$ incident to $e$ and identify $T$ with the ideal hyperbolic triangle $\tilde{T}$ so that $v$ is identified with the point at $\infty$.
	Let $f:\tilde{T}\to\tilde{T}$ represent $u\vert_T$ in these chosen coordinates.
	Extend $f$ to a neighborhood of the vertical boundaries of $\tilde{T}$, e.g. by using values of $u$ on adjacent faces of $X$, or by setting $f$ to be constant on horizontal rays emanating out of $\tilde{T}$.
	
	Now the sequence $p_n$ is identified with a sequence $z_n=iy_n\in\mathbb{C}$ with $y_n\to\infty$.
	We first claim that there is an unbounded sequence $\tilde{z}_n = i\tilde{y}_n$ whose image under $f$ is unbounded and satisfies $d_e(z_n,\tilde{z}_n)<\frac{1}{2}$, where $d_e$ denotes the Euclidean distance function in $\mathbb{C}$.
	
	Indeed, consider the Euclidean ball $B_{\frac{1}{2}}(\frac{1}{2}+iy_n)$, which is tangent to the vertical boundaries of $\tilde{T}$. By Lemma~\ref{courantlebesgue} there is some radius $r_n\in\left(\frac{1}{2},\sqrt{\frac{1}{2}}\right)$ so that
	\begin{equation}\label{CLe}
	d(u(t_1),u(t_2)) \le C\sqrt{E(f,D_n)}
	\end{equation}
	for some constant $C$ and for all $t_1,t_2\in\partial B_{r_n}(\frac{1}{2}+iy_n)$. Here $E(f,D_n)$ denotes the energy of $f$ on $D_n$, the Euclidean unit ball centered at $\frac{1}{2}+iy_n$. Since the hyperbolic metric is conformally Euclidean, the energy $E(f,D_n)$ does not depend on which metric is used to compute it.

	Since $r_n>\frac{1}{2}$, there are points $z_n^j\in \partial B_{r_n}(\frac{1}{2}+iy_n)\cap\{x=j\}$ for $j=0,1$.
	Since the energy of $u$ is finite, $E(f,D_n)$ tends to $0$ as $n\to\infty$.
	By \eqref{CLe}, $d(f(z_n^0),f(z_n^1))$ also tends to $0$ as $n\to\infty$.
	Since $u$ maps each edge to itself, the only way for sequences on different edges to approach each other is if they both approach the point at $\infty$.	Hence $f(z_n^j)\to\infty$.
		
	Also, by the Pythagorean Theorem (see Figure \ref{compmet}):
	$$
	d_e^2(z_n, z_n^0) = d_e^2\left(z_n^0, \frac{1}{2}+iy_n\right) - d_e^2\left(z_n, \frac{1}{2}+iy_n\right) =r_n^2-\left(\dfrac{1}{2}\right)^2\leq \dfrac{1}{2}-\dfrac{1}{4}=\dfrac{1}{4}.
	$$

	\begin{figure}[ht]
		\centering
		\includegraphics[width=0.4\textwidth]{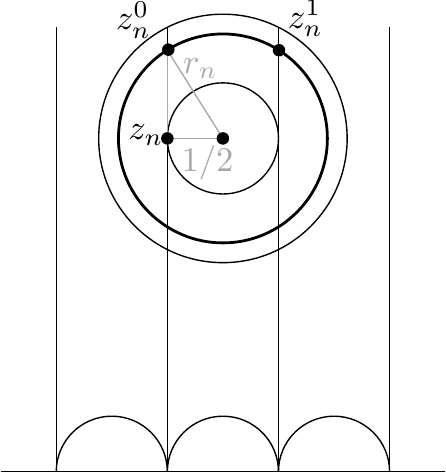}
		\caption{Courant-Lebesgue Lemma}\label{compmet}
	\end{figure}
	
	Thus $\tilde{z}_n\to\infty$, $f(\tilde{z}_n)\to\infty$, and $d_e(z_n,\tilde{z}_n)<\frac{1}{2}$.
	We complete the proof of properness by showing that $d(f(z_n),f(\tilde{z}_n))$ is uniformly bounded, implying that $f(z_n)\to\infty$ as well.
	Remark~\ref{euclid lip} establishes that $\abs{\nabla f}_e^2\le CE(u)$ in a neighborhood of the vertex at $\infty$. Here $\abs{\nabla f}_e^2$ is the norm of $\nabla f$ taken with respect to the Euclidean metric on $\C$.
	
	Now we have the estimate
	\[
	d(u(z_n),u(\tilde{z}_n)) \le CE(u)d_e(z_n,\tilde{z}_n)\le\frac{1}{2}CE(u).
	\]
	Hence $d(u(z_n),u(\tilde{z}_n))$ is uniformly bounded, as desired.
	
	Surjectivity of $u$ now follows easily by a continuity argument. 
	Since $u\vert_e$ is a proper continuous map, its image must be an interval.
	And since $u$ maps each end of $e$ to itself, it's clear that $u\vert_e$ must be surjective.
\end{proof}


\begin{theorem}\label{proper}
The energy minimizing maps $u\in W^{1,2}$ and $u_D\in\overline{\mathcal{D}}$ constructed in Theorem~\ref{harmonic} are proper.
\end{theorem}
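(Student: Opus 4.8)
The plan is to reduce properness on all of $\xs$ to the properness along edges already established in Proposition~\ref{properedge}. First I would unwind what properness means in this setting: since $\xs$ is a finite complex with finitely many punctures, a subset is relatively compact precisely when it stays a bounded distance from every vertex of $S$, and the ends of $\xs$ are exactly the cusps at these vertices. Thus it suffices to show that whenever a sequence $p_n\in\xs$ escapes every compact set, the image sequence $u(p_n)$ also escapes every compact set. After passing to a subsequence we may assume $p_n$ converges to a single punctured vertex $v$, and, since by local finiteness only finitely many faces are incident to $v$, after a further subsequence we may assume all $p_n$ lie in one face $T$.

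Following the setup of Proposition~\ref{properedge} verbatim, I would develop the faces around $v$ so that $v$ sits at $\infty$ in both the domain and target copies of $\tilde{T}$, and let $f$ represent $u\vert_T$ in these coordinates. Then $p_n$ is identified with $z_n=x_n+iy_n$ where $x_n\in[0,1]$ and $y_n\to\infty$. For each $n$ I would choose the auxiliary point $z_n'=iy_n$ on the vertical edge $\{x=0\}$ of $\tilde T$; this lies on an edge $e$, escapes to $v$ along $e$, and satisfies $d_e(z_n,z_n')=x_n\le1$.

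The key step combines two ingredients. By Proposition~\ref{properedge} the map $u\vert_{e\backslash S}$ is proper and carries the edge sequence $z_n'$ to the puncture $v$, so $\mathrm{Im}\,f(z_n')\to\infty$. By Remark~\ref{euclid lip}, on the horoball $\{\mathrm{Im}(z)\ge1\}$ the representation $f$ obeys a uniform Euclidean gradient bound $\abs{\nabla f}_e^2\le CE(u)$, hence is Euclidean-Lipschitz there with constant $L=\sqrt{CE(u)}$. For $n$ large, both $z_n,z_n'$ and the horizontal segment joining them lie in $\{\mathrm{Im}(z)\ge1\}$, so $d_e(f(z_n),f(z_n'))\le L\,d_e(z_n,z_n')\le L$. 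Consequently $\mathrm{Im}\,f(z_n)\ge\mathrm{Im}\,f(z_n')-L\to\infty$, which means $f(z_n)\to\infty$ and therefore $u(p_n)\to v$. The identical argument applies word for word to $u_D$, using the corresponding statements for $u_D$ in Proposition~\ref{properedge} and Remark~\ref{euclid lip}.

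The only real subtlety, and where I would be most careful, is that the Euclidean Lipschitz estimate of Remark~\ref{euclid lip} holds only in the region $\{\mathrm{Im}(z)\ge1\}$, so I must verify that $z_n$, the auxiliary point $z_n'$, and the segment between them all lie in this region for $n$ large; this is immediate from $y_n\to\infty$. Everything else is bookkeeping: the reduction to a single face is local finiteness, and the implication ``bounded Euclidean distance from a point whose height tends to $\infty$ forces the height to tend to $\infty$'' is what lets me convert the edge result into the interior result without ever estimating hyperbolic distances directly.
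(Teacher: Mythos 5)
Your argument is correct and is essentially identical to the paper's proof: both reduce to a single face accumulating at a single vertex, project $z_n=x_n+iy_n$ to the edge point $iy_n$, invoke Proposition~\ref{properedge} for the edge sequence, and use the Euclidean gradient bound of Remark~\ref{euclid lip} to transfer escape to infinity from the edge to the interior. Your explicit check that the segment joining $z_n$ to $iy_n$ lies in $\{\mathrm{Im}(z)\ge1\}$ is a small point the paper leaves implicit, but otherwise the two proofs coincide.
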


\begin{proof}
Again we state the proof for the map $u$, but the same argument holds for $u_D$. Let $p_n$ be an unbounded sequence in $X\backslash S$. Since there are finitely many faces and vertices of $X$, we may assume (after passing to a subsequence) that our sequence lies in a single face $T$ and accumulates on a single vertex $v$ of $T$. Now choose an isometry $\phi:T\to\tilde{T}$ sending $v$ to the point at $\infty$ and write $f = \phi\circ u\circ\phi^{-1}$ to represent $u\vert_T$ in these coordinates. If we let $z_n = x_n+iy_n = \phi(p_n)$ then we have $y_n\to\infty$.

Consider the sequence $\tilde{z}_n = iy_n$. From Proposition~\ref{properedge} we know $f(\tilde{z}_n)\to\infty$. Since $d_e(z_n,\tilde{z}_n)\le1$ (here $d_e$ denotes the Euclidean distance in $\mathbb{C}$) and $\abs{\nabla f}^2_e\le CE(u)$ (where $\abs{\nabla f}^2_e$ denotes the norm of $\nabla f$ taken with respect to the Euclidean metric), we conclude
\[
d(u(z_n),u(\tilde{z}_n))\le CE(u)d(z_n,\tilde{z}_n) \le CE(u).
\]
And hence $u(z_n)\to\infty$ as well.
\end{proof}


\begin{theorem}\label{degree}
The energy minimizing maps $u\in W^{1,2}$ and $u_D\in\overline{\mathcal{D}}$ have degree 1.
\end{theorem}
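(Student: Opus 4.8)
The plan is to compute the degree face by face and to exploit the homotopy invariance of the degree of a proper map. Since $u$, $u_D$ and $H$ all respect the simplicial structure, each restricts to a self-map of the open face $\mathrm{int}(T)\cong\mathrm{int}(\tilde T)$ for every face $T$, and I will interpret ``degree $1$'' as the assertion that each of these restrictions is a proper self-map of the oriented surface $\mathrm{int}(T)$ of degree $1$. The face-by-face formulation has the advantage of sidestepping the lack of a global manifold structure along singular edges. Properness of $u|_{\mathrm{int}(T)}$ follows by combining continuity of $u$ on $\xs$ (so that a sequence approaching an interior point $p$ of an edge $e$ of $T$ has image approaching $u(p)\in e$, since $u$ maps $e$ to itself) with Theorem~\ref{proper} (so that a sequence approaching an ideal vertex $v$ of $T$ has image approaching $v$). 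Thus any sequence leaving every compact subset of $\mathrm{int}(T)$ is sent to such a sequence, which is exactly properness; the same holds for $u_D$ and for $H$.

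The core of the argument is to upgrade the geodesic homotopy of Proposition~\ref{homotopic} connecting $u$ (resp. $u_D$) to $H$, which exists by Theorem~\ref{harmonic}, into a \emph{proper} homotopy on each face. Write $u_t(x)$ for the point a fraction $t$ of the way along the hyperbolic geodesic joining $H(x)$ to $u(x)$, which lies in $\mathrm{int}(T)$ because a hyperbolic triangle is convex. I must show that $\mathrm{int}(T)\times[0,1]\to\mathrm{int}(T)$, $(x,t)\mapsto u_t(x)$, is proper. Since the only noncompactness of $\xs$ is at the punctures, it suffices to control $u_t$ near each ideal vertex $v$; near an interior edge point the needed statement is immediate from continuity. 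Choose coordinates $\phi:T\to\tilde T$ with $v\mapsto\infty$. Both $u$ and $H$ preserve the puncture $v$: for $H$ this is built into its definition via local models near $v$, and for $u$ it is the content of Theorem~\ref{proper}. Hence, for any height $c$, there is a neighborhood of $v$ in the domain on which $\phi(u(x))$ and $\phi(H(x))$ both have imaginary part exceeding $c$.

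The geometric crux is that the horoball $\{\mathrm{Im}(z)>c\}\subset\H^2$ is geodesically convex: a hyperbolic geodesic segment joining two points of height greater than $c$ attains its minimum height at an endpoint, so the entire segment stays in the horoball. Consequently the geodesic from $H(x)$ to $u(x)$ remains in $\{\mathrm{Im}(z)>c\}$, whence $\mathrm{Im}(\phi(u_t(x)))>c$ for all $t$. This says $u_t(x)\to v$ uniformly in $t$ as $x\to v$, which is precisely properness of the homotopy near $v$. Invoking proper-homotopy invariance of the degree face by face then yields $\deg(u|_T)=\deg(H|_T)$, and identically $\deg(u_D|_T)=\deg(H|_T)$. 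Finally, $H|_T$ is a diffeomorphism of $\mathrm{int}(T)$ by Theorem~\ref{finite energy}, and it is orientation-preserving because it is assembled from orientation-preserving isometries of local models, orientation-preserving gluing maps, and harmonic diffeomorphism fill-ins inheriting that orientation; hence $\deg(H|_T)=1$, completing the proof.

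I expect the main obstacle to be establishing properness of the homotopy at the punctures, which has two ingredients: confirming that $u$ and $u_D$ genuinely preserve each \emph{individual} puncture (so that $u(x)$ and $H(x)$ escape to the \emph{same} cusp, not merely to some cusp), and controlling the geodesic interpolation so it cannot wander out of the cusp. The convexity of horoballs is exactly what secures the latter and is the key estimate of the argument. A secondary technical point is confirming the orientation-preservation of $H$, which pins the degree to $+1$ rather than $-1$.
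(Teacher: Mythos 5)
Your proposal is correct and follows essentially the same route as the paper: the geodesic homotopy from $H$ to $u$ (resp.\ $u_D$), properness of that homotopy because both maps escape to the same cusp, proper-homotopy invariance of the degree, and $\deg(H)=1$ since $H$ is a diffeomorphism. In fact you supply details the paper only asserts --- notably the horoball-convexity argument showing the geodesic interpolation cannot leave a cusp neighborhood, and the explicit check that $H$ is orientation-preserving.
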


\begin{proof}
Let $F:(X\backslash S, \sigma)\times[0,1]\to(X\backslash S, \tau)$ be the homotopy such that $F(x, t)$ is the point which is the fraction $t$ of the way along the geodesic from $H(x)$ to $u(x)$, where $H$ is the finite energy map constructed in Theorem~\ref{finite energy}. Since both $H$ and $u$ are proper maps that have the same behavior near each puncture, $F$ is a proper homotopy.

Now Theorem 13.2.1 and property (ii) in section 13.2 of \cite{DFN} imply that, since $H$ and $u$ are properly homotopic and the degree of $H$ is 1 (since $H$ is a diffeomorphism), the degree of $u$ is also 1. The same also holds for the map $u_D$.
\end{proof}

\subsection{Regularity of the $W^{1,2}$ minimizer}

Here we collect properties of the minimizing map $u\in W^{1,2}$. We begin with a balancing condition that characterizes the minimizer.

\begin{definition}\label{balancing}
For an edge $e$ of $X$, let $\{\phi_{j,\sigma}:T_j\to r_{j,\sigma}\tilde{T}\}$ denote a local model for $e$ with respect to the metric $\sigma$, and similarly $\{\phi_{j,\tau}:T_j\to r_{j,\tau}\tilde{T}\}$ for the metric $\tau$. Let $u:(\xs,\sigma)\to(\xs,\tau)$ be a map, and let $f_j = \phi_{j,\tau}\circ u\circ\phi_{j,\sigma}^{-1}:r_{j,\sigma}\tilde{T}\to r_{j,\tau}\tilde{T}$ represent $u\vert_{T_j}$ in the induced coordinates. Write $f_j = f_j^1 + if_j^2$.

We say $u$ satisfies the \textit{weak balancing condition} along $e$ if for any test function $\varphi\in C^\infty_c(0,\infty)$ we have
\[
\sum_j\int_0^\infty\varphi(y)\frac{\partial f_j^2}{\partial x}(iy)dy = 0.
\]
We say that $u$ satisfies the \textit{strong balancing condition} along $e$ if
\[
\sum_j\frac{\partial f_j^2}{\partial x}(iy) = 0
\]
pointwise.
\end{definition}

\begin{proposition}
The minimizing map $u\in W^{1,2}$ constructed in Theorem~\ref{harmonic} satisfies the weak balancing condition.
\end{proposition}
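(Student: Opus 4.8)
The plan is to derive the weak balancing condition as the first variation of energy under a variation of the map that moves mass along the edge $e$, exploiting the fact that $u$ minimizes energy among all of $W^{1,2}$ (not just maps with fixed boundary values). The key point is that membership in $W^{1,2}$ only requires $u$ to be simplicial and to lie in $W^{1,2}$ on each face; it does \emph{not} pin down how $u$ maps the edge $e$ into itself. So I am free to perturb $u$ along $e$ and compare energies. Concretely, I would work in the local model $\{\phi_{j,\sigma}\}$, $\{\phi_{j,\tau}\}$ for $e$, identify each $u|_{T_j}$ with $f_j:r_{j,\sigma}\tilde T\to r_{j,\tau}\tilde T$, and consider a one-parameter family of competitors that, near the geodesic $\{iy\}$ representing $e$, slides the image along $e$ according to the test function $\varphi$.

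The main steps in order: First, fix $\varphi\in C^\infty_c(0,\infty)$ and construct a compactly supported variation $f_{j,t}$ of the maps $f_j$ that agrees with $f_j$ away from a neighborhood of the edge, stays in $W^{1,2}$ and remains simplicial (the crucial compatibility being that all the $f_j$ share the same trace on the edge, which in these coordinates is the positive imaginary axis). The natural variation is to post-compose with the horocyclic flow along $e$, i.e. a vertical translation in the coordinate on $e$; in the upper-half-plane model this corresponds to scaling, so I would let the variation translate $f_j^2(iy)$, the second (vertical/edge) coordinate, by $t\varphi(y)$ near the edge while leaving the horizontal coordinate essentially unchanged to first order. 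Second, I would compute $\frac{d}{dt}\big|_{t=0}E(u_t)$. Because $u$ is a minimizer over all of $W^{1,2}$, this derivative must vanish. Third, I would show that after integrating by parts and using the harmonicity of $u|_{T_j}$ in the interior of each face (Theorem~\ref{interior smooth}), all interior contributions cancel and only a boundary term along the edge survives, producing exactly
\[
\sum_j\int_0^\infty\varphi(y)\frac{\partial f_j^2}{\partial x}(iy)\,dy = 0.
\]
The normal-derivative terms $\partial f_j^2/\partial x$ enter here precisely because $\partial/\partial x$ is the inward normal to the half-disc at the edge in each local model, and the signs from the $N$ incident faces add (rather than cancel) because the local model orients all faces consistently to the same side of the geodesic.

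The delicate point, and the one I would justify most carefully, is that the chosen variation genuinely stays inside the admissible class $W^{1,2}$: it must remain simplicial, meaning the perturbed map must still send $e$ into $e$ and each face into itself, and the traces of the perturbed $f_j$ on the common edge must continue to agree across all faces incident to $e$. This forces the variation on the edge to be a single function (the same $\varphi$-driven slide for every $j$), which is exactly why the $N$ faces are summed together in the balancing identity rather than each contributing its own independent Euler--Lagrange equation. I expect this compatibility-of-traces bookkeeping to be the main obstacle; by contrast, the energy computation itself is routine once the variation is fixed, since it reduces to differentiating the Dirichlet integral in the conformal (Euclidean) coordinate and invoking interior harmonicity to discard the bulk terms. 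A secondary technical issue is controlling the variation near the punctures, but the compact support of $\varphi$ in $(0,\infty)$ keeps the variation away from both the cusp at $\infty$ and the vertices, so no convergence subtleties arise.
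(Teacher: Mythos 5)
Your proposal is correct and follows essentially the same route as the paper, which simply observes that the identity is the first variation formula for the energy under target variations tangent to the edge and defers the computation to Theorem~3 of the cited Daskalopoulos--Mese paper. Your sketch fills in exactly that argument --- the edge-tangential variation with matching traces across the $N$ incident faces, interior harmonicity killing the bulk terms, and the consistently oriented local model making the normal-derivative boundary terms add --- so nothing further is needed.
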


\begin{proof}
After changing notation, this is the content of Theorem 3 of \cite{DM3}. The result follows from the first variation formula for the energy functional.
\end{proof}

\begin{remark}
    In fact, a converse to the above proposition is true. Investigating the first variation formula for the energy functional shows that a map $u:(\xs,\sigma)\to(\xs,\tau)$ is energy minimizing if and only if $u\vert_T$ is harmonic for each face $T$ and $u$ satisfies the weak balancing condition on each edge $e$.
\end{remark}

As a result of the balancing condition, the minimizing map $u\in W^{1,2}$ enjoys a great deal of regularity.

\begin{proposition}\label{c1beta}
Fix an edge $e$ of $X$, and let $\{\phi_{j,\sigma}:T_j\to r_{j,\sigma}\tilde{T}\}$ and $\{\phi_{j,\tau}:T_j\to r_{j,\tau}\tilde{T}\}$ be local models for $e$ with respect to the metrics $\sigma$ and $\tau$.
Let $u\in W^{1,2}$ be the minimizing map from Theorem~\ref{harmonic}, and let $f_j = f_j^1+f_j^2 = \phi_{j,\tau}\circ u\circ\phi_{j,\sigma}^{-1} :r_{j,\sigma}\tilde{T}\to r_{j,\tau}\tilde{T}$ represent the map $u$ in coordinates. Let $y>0$ and $0<r<\min_jr_j$. There exists a neighborhood $\Omega$ of $iy$ with compact closure in $B_r(iy)$ so that $f_j^\alpha\in C^{1,\beta}(\Omega\cap r_{j,\sigma}\tilde{T})$ for each $j$ and for each $\alpha=1,2$.
\end{proposition}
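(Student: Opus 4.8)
The plan is to work in a local model for the edge $e$, fixing the height $y>0$ so that the common image of $e$ under $u$ stays in the interior of the target edge. After identifying each incident face with a Euclidean half-disc $D_j^+ = B_r(iy)\cap\{x\ge 0\}$ (legitimate since the hyperbolic metric is conformally Euclidean and harmonicity of a map out of a surface depends only on the conformal class of the domain), each $f_j$ is a harmonic map into $\H^2$ and, by Theorem~\ref{global lip}, is Lipschitz up to the edge with $\nabla f_j\in L^\infty$. Because $y>0$, the common trace $h(y):=f_j^2(iy)$ is bounded below by a positive constant near $iy$, so in the harmonic map equations
\[
\Delta f_j^1 = \frac{2}{f_j^2}\langle\nabla f_j^1,\nabla f_j^2\rangle,\qquad \Delta f_j^2 = \frac{1}{f_j^2}\bigl(\abs{\nabla f_j^2}^2-\abs{\nabla f_j^1}^2\bigr),
\]
the right-hand sides $F_j^\alpha$ lie in $L^\infty$. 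The boundary conditions along the edge are that $f_j^1\equiv 0$ (the edge maps to the edge) and that the various $f_j^2$ share the single trace $h$, coupled through the weak balancing condition $\sum_j\partial_x f_j^2=0$.

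First I would dispose of $f_j^1$: it solves $\Delta f_j^1=F_j^1\in L^\infty$ with zero Dirichlet data on the flat edge, so odd reflection across the edge yields a function solving an equation with $L^\infty$ source on a full disc; the $W^{2,p}$ estimates (all $p<\infty$) then give $f_j^1\in C^{1,\beta}$ up to the edge, and in particular $\nabla f_j^1\in C^{0,\beta}$ there.

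The heart of the matter is upgrading the regularity of the shared trace $h$, and this is exactly where the balancing condition enters. I would split $f_j^2=A_0+C_j+B_j$, where $B_j$ solves $\Delta B_j=F_j^2$ with zero boundary data (so $B_j\in C^{1,\beta}$ and $\partial_x B_j\in C^{0,\beta}$ on the edge, using only $F_j^2\in L^\infty$), $C_j$ is harmonic with zero data on the edge and the arc values of $f_j^2$ (which are analytic, hence smooth, so $C_j$ is smooth up to the interior of the edge by odd reflection), and $A_0$ is the harmonic function with the common Dirichlet data $h$ on the edge and zero data on the arc — crucially the \emph{same} function for every sheet. Substituting this decomposition into the weak balancing condition yields, as distributions along the edge,
\[
N\,\partial_x A_0 = -\sum_j\bigl(\partial_x C_j+\partial_x B_j\bigr)\in C^{0,\beta},
\]
so the distributional Neumann trace of the single harmonic function $A_0$ is a $C^{0,\beta}$ function. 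Schauder regularity for the Neumann problem then promotes $A_0$ to $C^{1,\beta}$ up to the interior of the edge, whence $h=A_0\vert_{\text{edge}}\in C^{1,\beta}$. Finally, each $f_j^2$ solves $\Delta f_j^2=F_j^2\in L^\infty$ with the now-$C^{1,\beta}$ Dirichlet data $h$, so splitting off a harmonic function with data $h$ and a zero-boundary $L^\infty$-source term gives $f_j^2\in C^{1,\beta}$ up to the edge, completing the proof.

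I expect the main obstacle to be precisely the coupling across the sheets: no individual $f_j^2$ satisfies a clean Neumann condition, and the harmonic extension $A_0$ of the merely Lipschitz trace $h$ need not have a bounded normal derivative on its own. The whole point is that the balancing condition identifies the otherwise-uncontrolled Neumann trace of the \emph{common} part $A_0$ with an explicitly $C^{0,\beta}$ combination of the well-behaved pieces, so that the transmission problem decouples after one differentiation. Secondary technical care is needed to localize $\Omega$ strictly inside the edge, so that the corner where the edge meets the arc $\partial B_r(iy)$ does not interfere with the boundary Schauder estimates, and to phrase all traces distributionally so that only the weak balancing condition of the preceding proposition is invoked.
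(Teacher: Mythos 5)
Your argument is correct, and it turns on the same key mechanism as the paper's proof: the balancing condition is precisely what controls the otherwise-uncontrolled normal derivative of the shared trace along the edge. The implementation, however, is genuinely different. The paper follows Theorem 4 of \cite{DM3}: it extends each component to a single function on a \emph{full} disc, by odd reflection for $f_{j_0}^1$, and for $f_{j_0}^2$ by setting $\psi^2(x+iy) = -f_{j_0}^2(-x+iy)+\frac{2}{J}\sum_{j=1}^J f_j^2(-x+iy)$ for $x<0$ (where $J$ is the number of incident faces). Continuity of $\psi^2$ across $\{x=0\}$ follows from the shared Dirichlet trace, and the weak balancing condition makes the distributional normal derivatives from the two sides agree, so $\psi^\alpha$ is a weak solution of an elliptic system with $L^\infty$ right-hand side on the whole disc and \emph{interior} $W^{2,p}$/Schauder theory finishes the job. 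It is worth observing that the paper's formula is exactly the odd reflection of the deviation $f_{j_0}^2-\frac{1}{J}\sum_j f_j^2$ (which vanishes on the edge) plus the even reflection of the average $\frac{1}{J}\sum_j f_j^2$ (whose weak Neumann trace vanishes by balancing) --- the same average/deviation split that your common harmonic part $A_0$ isolates, but packaged so that no boundary Schauder estimate for the Neumann problem is ever invoked. Your version stays on the half-disc and pays for that with the three-term decomposition $f_j^2=A_0+C_j+B_j$ and the Neumann-trace bookkeeping; the paper's version requires guessing the right reflection formula but then needs only interior regularity. Both are sound. The one point to be careful about, which you already flag, is that the identity $J\,\partial_x A_0=-\sum_j(\partial_x C_j+\partial_x B_j)$ must be read variationally (integrating by parts against test functions supported away from the corners), since before the proposition is proved the balancing condition is available only in that weak sense.
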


\begin{proof}
We follow the strategy of Theorem 4 from \cite{DM3}. Suppose there are $J$ faces meeting at the edge $e$, so the indices $j$ range from $1$ up to $J$. Fix an index $j_0$. For the function $f_{j_0}^2$, define
\[
\psi^2(x+iy) = \begin{cases}
f_{j_0}^2(x+iy), & x\ge0\\
-f_{j_0}^2(-x+iy) + \frac{2}{J}\sum_{j=1}^J f_j^2(-x+iy), & x<0.
\end{cases}
\]
For the function $f_{j_0}^1$, define
\[
\psi^1(x+iy) = \begin{cases}
f_{j_0}^1(x+iy), & x\ge0\\
-f_{j_0}^1(-x+iy), & x<0.
\end{cases}
\]

In either case the Lipschitz continuity of $u$ implies that $\psi^i$ is Lipschitz. Next we claim that for any test function $\varphi\in C^\infty_c(B_r(iy))$,
\[
\lim_{\epsilon\to0}\int_{B_r(iy)\cap \{x=\epsilon\}}\varphi\frac{\partial\psi^\alpha}{\partial x} = \lim_{\epsilon\to0}\int_{B_r(iy)\cap \{x=-\epsilon\}}\varphi\frac{\partial\psi^\alpha}{\partial x}.
\]
For $\alpha = 1$, use the fact that $f_{j_0}^1(it)=0$ for all $t$. For $\alpha = 2$, use the balancing condition.

From here the proof follows almost verbatim from \cite{DM3}. The functions $\psi^\alpha$ satisfy an elliptic equation with controlled coefficients, and the standard elliptic theory implies the regularity claimed.
\end{proof}

\begin{corollary}
The minimizing map $u\in W^{1,2}$ from Theorem~\ref{harmonic} satisfies the strong balancing condition.
\end{corollary}

\begin{proof}
This follows immediately from the weak balancing condition and the regularity of Proposition~\ref{c1beta}.
\end{proof}

\begin{corollary}
Let $u:(X\backslash S,\sigma)\to(X\backslash S,\tau)$ be a harmonic map. Then the restriction of $u$ to the closure of any face is $C^\infty$ smooth.
\end{corollary}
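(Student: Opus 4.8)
The plan is to combine the interior analyticity already established with a boundary bootstrap at the edges. By Theorem~\ref{interior smooth}, the restriction $u\vert_T$ is analytic, hence $C^\infty$, on the interior of every face, so it remains only to prove smoothness up to the (open) edges of $T$. Observe that since the ideal vertices are removed, the closure of a face in $\xs$ consists of its interior together with its three open edges, and the edges of an ideal triangle meet only at the ideal vertices; thus there are no finite corners to worry about, and every point of the closed face other than the excluded vertices lies either in the interior or on a single open edge at finite height. It therefore suffices to prove local $C^\infty$ regularity up to each edge.

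Fix an edge $e$ and a point $iy$ on it, and adopt the local-model coordinates of Proposition~\ref{c1beta}, so that $u$ is represented near $iy$ by maps $f_j = f_j^1 + if_j^2$, $j = 1,\ldots,J$, each harmonic in $\{x>0\}$ and satisfying, along $\{x=0\}$, the relations $f_j^1 = 0$ (the edge maps into the geodesic $\{x=0\}$), $f_1^2 = \cdots = f_J^2$ (the sheets agree on $e$), and $\sum_j \partial_x f_j^2 = 0$ (the strong balancing condition). Although Proposition~\ref{c1beta} and the strong balancing corollary were stated for the minimizer of Theorem~\ref{harmonic}, their proofs use only interior harmonicity together with the weak balancing condition, both of which hold for any harmonic map by the Euler--Lagrange characterization recorded in the Remark identifying harmonic maps with energy minimizers; hence we may apply them here. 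In particular, Proposition~\ref{c1beta} supplies the base regularity $f_j^\alpha \in C^{1,\beta}$ up to $\{x=0\}$ on a neighborhood $\Omega$ of $iy$.

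The heart of the argument is an elliptic bootstrap. Each $f_j$ solves the harmonic-map system $\Delta f_j + \Gamma(f_j)(\nabla f_j, \nabla f_j) = 0$, whose Christoffel coefficients $\Gamma$ are real-analytic in $f_j$ (the target $r_{j,\tau}\tilde T$ being analytic) and whose nonlinearity is quadratic in $\nabla f_j$. Viewing $(f_1,\ldots,f_J)$ as a single vector-valued unknown, the $2J$ boundary relations above --- namely the $J$ Dirichlet conditions $f_j^1 = 0$, the $J-1$ matching conditions $f_i^2 = f_j^2$, and the single balancing condition $\sum_j \partial_x f_j^2 = 0$ --- are exactly the right number for the $2J$ second-order scalar equations. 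Once one checks that these conditions satisfy the complementing (Lopatinski--Shapiro) condition, the iteration is routine: assuming $f_j^\alpha \in C^{k,\beta}(\Omega)$ makes the coefficients and right-hand sides of the system $C^{k-1,\beta}$, so the boundary Schauder estimates for elliptic systems upgrade the solution to $C^{k+1,\beta}$, and starting from $C^{1,\beta}$ and iterating yields $f_j^\alpha \in C^\infty(\Omega)$ up to the edge. Alternatively one can bootstrap the reflected functions $\psi^\alpha$ of Proposition~\ref{c1beta}: these are defined on a full two-sided neighborhood, and the matching of $\psi^\alpha$ and its normal derivative across $\{x=0\}$ (which is precisely what the proof of Proposition~\ref{c1beta} arranges, using $f_j^1(it)=0$ for $\alpha=1$ and the balancing condition for $\alpha=2$) exhibits $\psi^\alpha$ as a weak solution of an elliptic equation to which interior Schauder estimates apply.

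The main obstacle is precisely this edge bootstrap. Unlike the interior, where the equation decouples face-by-face, at an edge the boundary conditions couple the different sheets $f_j$ through the matching and balancing relations, so one must verify that the resulting coupled boundary value problem is an elliptic system with complementing boundary conditions rather than a scalar Dirichlet or Neumann problem; this is the only nontrivial check, after which the Schauder iteration closes up automatically. Combining the interior analyticity of Theorem~\ref{interior smooth} with $C^\infty$ regularity up to each open edge, and recalling that every point of the closure of $T$ in $\xs$ lies in the interior or on a single open edge, we conclude that $u\vert_T$ is $C^\infty$ on the closure of $T$.
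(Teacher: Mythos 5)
Your proposal is correct and follows essentially the same path as the paper: interior analyticity from Theorem~\ref{interior smooth}, then a bootstrap up to the edges starting from the $C^{1,\beta}$ regularity of Proposition~\ref{c1beta}; the ``alternative'' you mention --- bootstrapping the reflected functions $\psi^\alpha$ --- is precisely what the paper does (citing the argument of \cite{DM3}, Corollary 6). Your primary route via the coupled elliptic system with the complementing condition is also sound and is in fact the mechanism the paper deploys later in Theorem~\ref{analytic} to get analyticity up to the boundary, so nothing is missing.
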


\begin{proof}
In the interior of a face we know $u$ is analytic from Theorem~\ref{interior smooth}. All that remains to check is that $u$ is smooth up to the edges. This result follows from the bootstrapping argument of \cite{DM3}, Corollary 6, using the functions $\psi^\alpha$ defined in the proof of our Proposition~\ref{c1beta}.
\end{proof}

Moreover, the map $u\in W^{1,2}$ is analytic all the way up to the boundary! We begin by reviewing some important concepts in the theory of partial differential equations.

Let $G$ be a bounded domain with sufficiently smooth non empty boundary and let
\begin{align}\label{linearpde}
L_{jk}(x, D)u_k(x)= f_j(x), \qquad j=1,..., N, \quad x\in G,
\end{align}
be a linear system of partial differential equations subject to boundary conditions of the form
\begin{align}\label{linearbdry}
B_{rk}(x, D)u_k(x)=g_r(x), \qquad r=1,..., m, \quad x\in\partial G,
\end{align}
for some $m$ that will be specified below.
Assume integer weights are given to the system: $s_1,..., s_N$ associated to the equations in the interior, $t_1,..., t_N$ associated to the functions $u_1,..., u_N$, and $h_1,..., h_m$ associated to the boundary equations.
These weights must be chosen so that the order of $L_{jk}$ is less that $s_j+t_k$ and the order of $B_{rk}$ is less than $t_k-h_r$.
Moreover, we can assume $\max s_j= 0$.

Let $L^0_{jk}(x, \Xi), B^0_{rk}(x, \Xi)$ be the principal part of the operator $L_{jk}$ and $B_{rk}$ respectively, that is, the terms of order exactly $s_j+t_k$ and $t_k-h_r$ respectively, and $L(x, \Xi)$ be the determinant of the matrix whose components are $L_{jk}^0(x, \Xi)$.

\begin{definition}
	The system \eqref{linearpde} is \textit{elliptic} if and only if $L(x, \Xi)\neq 0$ for any real non-zero $\Xi$. 
\end{definition}

Note that $L$ is a homogeneous polynomial of degree 
$$
P=\sum_{j=1}^N(s_j+t_j).
$$

\begin{definition}
	The system \eqref{linearpde} is \textit{properly elliptic} if and only if $P$ is even, say $P=2m$ and, for each pair $\Xi, \Xi'$ of linearly independent vectors the equation
	$$
	L(x, \Xi+\tau\Xi')=0
	$$
	has $m$ roots with positive imaginary part and $m$ roots with negative imaginary part.
\end{definition}

Let $x_0\in\partial G$, $n$ be the unit normal at $x_0$ and $\zeta$ any real vector tangent to $\partial G$ at $x_0$. 
Let $\tau_s^+(x_0, \zeta)$, $s=1,..., m$, be the roots of $L(x_0, \zeta+\tau n)=0$ with positive imaginary part (which exist if the system is properly elliptic).
Set
$$
L_0^+(x_0, \zeta; \tau)=\prod_{s=1}^m(\tau-\tau_s^+(x_0, \zeta)),
$$
And let $L^{jk}(x_0, \zeta+\tau n)$ be the components of the matrix adjoint to the matrix $L_{jk}^0(x_0, \zeta+\tau n)$.
Define
$$
Q_{rk}(x_0, \zeta; \tau)=\sum_{j=1}^NB_{rj}^0(x_0, \zeta+\tau n)L^{jk}(x_0, \zeta+\tau n).
$$
\begin{definition}
	For any $x_0\in \partial G$ and any real vector $\zeta$ tangent to $\partial G$ at $x_0$, let us regard $L^+_0(x_0, \zeta; \tau)$ and the elements of the matrix $Q_{rk}(x_0, \zeta; \tau)$ as polynomials in $\tau$.
	The system \eqref{linearbdry} of boundary operators satisfies the \textit{complementing condition} (with respect to the system \eqref{linearpde}) if and only if the rows of $Q$ are linearly independent modulo $L_0^+(x_0, \zeta; \tau)$.
	That is
	$$
	\sum_{r=1}^mC_rQ_{rk}(x_0, \zeta; \tau)\equiv 0 \pmod{L^+_0}
	$$
	only if the $C_r$ are all 0.
\end{definition}

We extend the above definitions to a nonlinear system of differential equations of the form
\begin{align}\label{nonlinear}
\varphi_k(x, Du)=0, \quad k=1,..., N,\quad x\in G,
\end{align}
with boundary values
\[
\chi_r(x, Du)=0, \quad r=1,..., m, \quad x\in \partial G,
\]
in which the $\varphi_k$ and $\chi_r$ are analytic functions of their arguments. 

\begin{definition}\label{nonlin}
The system \eqref{nonlinear} is \textit{elliptic} along the solution $u$ if the linear equations of variation 
$$
L_{jk}(x, D)v^k:=\left.\dfrac{d}{d\lambda}\right|_{\lambda = 0}\varphi_j(x, Du+\lambda Dv)=0,
$$
form a linear elliptic system. 
We define \textit{properly elliptic} and the \textit{complementing condition} in the same way. 
\end{definition}

To prove that the minimizing $u\in W^{1,2}$ is analytic up to an edge $e$ of $X$, fix local models $\{\phi_{j,*}:T_j\to r_{j,*}\tilde{T}\}$ for the edge $e$ with respect to the metrics $*=\sigma,\tau$, and represent $u\vert_{T_j}$ in coordinates by $f_j = f_j^1+if_j^2 = \phi_{j,\tau}\circ u\circ\phi_{j,\sigma}^{-1}:r_{j,\sigma}\tilde{T}\to r_{j,\tau}\tilde{T}$. Suppose that there are $N$ faces, $T_1,\ldots,T_N$, incident to $e$, counted with multiplicity.

\begin{theorem}\label{analytic}
    For the minimizing map $u\in W^{1,2}$ and any face $T$ of $X$, the restriction $u\vert_T$ to the closed face is analytic up to the boundary.
\end{theorem}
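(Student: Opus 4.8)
The plan is to realize $u$ near an edge as the solution of an analytic elliptic boundary value problem satisfying the complementing condition, and then invoke Morrey's analyticity theorem. Fix an edge $e$ with $N$ incident faces $T_1,\dots,T_N$, local models $\{\phi_{j,*}:T_j\to r_{j,*}\tilde T\}$ for $*=\sigma,\tau$, and the representatives $f_j=f_j^1+if_j^2=\phi_{j,\tau}\circ u\circ\phi_{j,\sigma}^{-1}$, so that $e$ is carried to the imaginary axis $\{x=0\}$ in both domain and target. I would treat the $2N$ real functions $f_1^1,f_1^2,\dots,f_N^1,f_N^2$ as the unknowns of a single system: in the interior of each half-disc they satisfy the harmonic-map equations, a second-order system whose coefficients (the hyperbolic metric and its Christoffel symbols) are real analytic. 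On $\{x=0\}$ the simplicial structure and the balancing condition furnish exactly $2N$ boundary relations, namely $f_j^1=0$ for each $j$ (the edge maps to the edge), $f_j^2=f_1^2$ for $j=2,\dots,N$ (continuity of $u$ across $e$), and the strong balancing condition $\sum_{j=1}^N \partial_x f_j^2=0$ established in the corollary to Proposition~\ref{c1beta}. By Theorem~\ref{interior smooth} the $f_j$ are already analytic in the interior, and by the smoothness corollary following Proposition~\ref{c1beta} they are $C^\infty$ up to $\{x=0\}$; what remains is analyticity up to this axis.

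To apply Morrey's theorem (see \cite{M}) I would verify that this system is properly elliptic along $u$ and that the boundary operators satisfy the complementing condition in the sense of Definition~\ref{nonlin}. Proper ellipticity is immediate: the principal part of the linearized harmonic-map operator is the Laplacian acting diagonally on the $2N$ components, so with weights $s_j=0$ and $t_k=2$ the characteristic determinant is $L(x,\Xi)=(\xi_1^2+\xi_2^2)^{2N}$ up to a nonzero factor. This is nonvanishing for real $\Xi\neq0$, has even degree $P=4N=2m$ with $m=2N$, and for independent $\Xi,\Xi'$ the roots of $L(x,\zeta+\tau n)$ are $\tau=\pm i|\zeta|$, each of multiplicity $2N$, splitting evenly between the upper and lower half-planes.

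The heart of the argument, and the step I expect to be the main obstacle, is the complementing condition. Because the principal symbol is diagonal, the adjugate matrix $L^{jk}$ and the factor $L_0^+(x_0,\zeta;\tau)=(\tau-i|\zeta|)^{2N}$ are explicit, and forming the $Q_{rk}$ for the $2N$ boundary operators reduces the condition to a finite linear-algebra check: one must show that no nontrivial combination of the rows coming from the Dirichlet-type relations ($f_j^1=0$ and $f_j^2=f_1^2$) and the single Neumann-type relation ($\sum_j\partial_x f_j^2=0$) vanishes modulo $L_0^+$. The first components decouple entirely and contribute $N$ independent Dirichlet conditions, so the real work is confined to the second components, where the $N-1$ matching conditions together with the one balancing condition must be seen to be independent modulo $(\tau-i|\zeta|)^N$; this is precisely where the balancing condition does its job, playing the role of the missing transmission datum. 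This computation is carried out, in the notation of a single reflected system, in \cite{DM3}, and I would follow that verification.

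Granting the complementing condition, Morrey's analyticity theorem applies to the $C^\infty$ solution $u$ and yields that each $f_j$ is real analytic up to $\{x=0\}$, hence $u\vert_{T}$ is analytic up to each of its edges. Since a face has finitely many edges and $u\vert_T$ is already analytic in the interior by Theorem~\ref{interior smooth}, $u\vert_T$ is analytic on the whole closed face, proving the theorem.
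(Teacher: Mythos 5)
Your proposal follows essentially the same route as the paper: represent $u$ near an edge via the local models as a $2N$-component analytic elliptic system with the Dirichlet conditions $f_j^1=0$, the matching conditions $f_j^2=f_1^2$, and the strong balancing condition as the Neumann-type datum, verify proper ellipticity and the complementing condition for this system, and conclude by Morrey's boundary analyticity theorem. The only difference is that the paper carries out the complementing-condition check explicitly (reducing it to the invertibility of a boundary matrix $B$ with $\det B = N\tau$ modulo $\abs{\zeta+\tau n}$) whereas you outsource that linear-algebra step to the literature, but you correctly identify it as the crux and where the balancing condition enters.
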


\begin{proof}
We will use the representations $f_j$ described above in a local model for an edge $e$ to show that $u$ is analytic up to the edge $e$. In particular, we will show that each $f_j$ is analytic up to the edge $\{x=0\}$ of $\tilde{T}$.

We will use the boundary regularity of \cite{M}, Theorem 6.8.2. The equations governing the maps $f_j$ in the interior are
\begin{align}
    \frac{\partial^2 f_j^1}{\partial x^2} + \frac{\partial^2 f_j^1}{\partial y^2} - \frac{2}{f_j^2}\left(\frac{\partial f_j^1}{\partial x}\frac{\partial f_j^2}{\partial x} + \frac{\partial f_j^1}{\partial y}\frac{\partial f_j^2}{\partial y}\right) & = 0\\
    \frac{\partial^2 f_j^2}{\partial x^2} + \frac{\partial^2 f_j^2}{\partial y^2} + \frac{1}{f_j^2}\left[\left(\frac{\partial f_j^1}{\partial x}\right)^2 + \left(\frac{\partial f_j^1}{\partial y}\right)^2 - \left(\frac{\partial f_j^2}{\partial x}\right)^2 - \left(\frac{\partial f_j^2}{\partial y}\right)^2\right] & = 0.\nonumber
\end{align}
The leading order behavior of each equation is already linear, so the principle symbols fit into a matrix $L^0(x,\Xi) = \abs{\Xi}^2Id_{2N}$. The boundary conditions come in several types. First, the fact that each $f_j$ maps the edge $\{x=0\}$ to itself implies that
\[
f_j^1(0,y) = 0 \qquad \text{for }j=1,\ldots,N.
\]
And the fact that all the $f^j$ must agree on that edge implies
\[
f_j^2(0,y) = f_1^2(0,y) \qquad \text{for }j=2,\ldots,N.
\]
And finally the balancing condition says
\[
\sum_{j=1}^N\frac{\partial f_j^2}{\partial x}(0,y) = 0.
\]
Note that this system of boundary conditions is linear. Label these equations 1 through $2N$ in order, and name functions $\tilde{f}_j = f_j^1$, $\tilde{f}_{j+N} = f_J^2$ for $j=1,\ldots,N$. Then the nonzero entries in the matrix $B$ encoding the boundary conditions are
\begin{align*}
B_{rr} = 1, &\qquad r = 1,\ldots,N\\
B_{r,N+1} = 1, &\qquad r=N+1,\ldots,2N-1\\
B_{r,r+1} = -1, &\qquad r=N+1,\ldots,2N-1\\
B_{2N,k} = \frac{\partial}{\partial x}, &\qquad k=N+1,\ldots,2N.
\end{align*}
 
We assign weights $t_k = 2$ to each function and $s_j = 0$ to each equation, so that the order of $L_{j,k}$ is $2 = s_j+t_k$. We assign weight $h_r=2$ for boundary conditions $r=1,\ldots,2N-1$ and $h_{2N} = 1$, so that the order of each $B_{r,k}$ is equal to $t_k-h_k$.

To show the system is elliptic, it suffices to show that for $\Xi\neq 0$, the matrix $L^0(x,\Xi)$ is invertible. Since $\abs{\Xi}^2\neq 0$ and $Id_{2N}$ is invertible, the system is elliptic.

To show the system is properly elliptic, we must show the polynomial $det(L^0(x,\Xi+\tau\Xi'))$ has $N$ roots above the real axis and $N$ below when $\Xi$ and $\Xi'$ are linearly independent. The determinant is simply
\[
L(x,\Xi+\tau\Xi') = \abs{\Xi+\tau\Xi'}^{4N} = (\abs{\Xi}^2 + 2\tau\langle\Xi,\Xi'\rangle + \tau^2\abs{\Xi'}^2)^{2N}.
\]
By the linear independence of $\Xi$ and $\Xi'$ and the Cauchy-Schwarz inequality, the roots of this polynomial are split half above the real axis and half below. Hence the system is properly elliptic.

To show the system of boundary values satisfies the complementing condition, we argue as follows. At a point $iy$ in the boundary of our domain, the normal vector is $n = (-1,0)$ and any tangent vector $\zeta$ has the form $\zeta=(0,\zeta_2)$. The roots of $L(iy,\zeta+\tau n)$ with positive imaginary part are
\[
\tau_s^+ = i\zeta_2 \qquad s=1,\ldots,2N,
\]
assuming $\zeta_2>0$. Now
\[
L_0^+ = (\tau-i\zeta_2)^{2N}.
\]
The matrix adjoint of $L(iy,\zeta+\tau n) = \abs{\zeta+\tau n}^2Id_{2n}$ is $\abs{\zeta+\tau n}^{2N-1}Id_{2N}$. Hence the matrix $Q$ is
\[
Q(iy,\zeta;\tau) = \abs{\zeta+\tau n}^{2N-1}B
\]
The linear independence of the rows of $Q$ modulo $L_0^+$ is equivalent to the linear independence of the rows of $B$ modulo $\abs{\zeta+\tau n}$.

The determinant of $B$ can be computed as
\[
det(B) = N\tau.
\]
As this quantity is invertible modulo $\abs{\zeta+\tau n}$,, the rows of $Q$ are linearly independent modulo $L_0^+$. Hence the system of boundary conditions satisfies the complementing condition, and the regularity of \cite{M}, Theorem 6.8.2 applies.
\end{proof}


\begin{thebibliography}{AAA}
	
	\bibitem[CP]{CP}
	C. Charitos and A. Papadopoulos, \textit{The Geometry of ideal 2-dimensional simplicial complexes}, Glasgow Math. J. \textbf{43} (2001), 39-66.
	
	\bibitem[DM1]{DM1}
	G. Daskalopoulos and C. Mese, \textit{Harmonic maps from 2-complexes}, Comm. Anal. Geom. \textbf{14} (2006), 497-549. 
	
	\bibitem[DM2]{DM3}
	G. Daskalopoulos and C. Mese, \textit{Harmonic maps from a Simplicial Complex and Geometric Rigidity}, J. Diff. Geo. \textbf{78} (2008), 269-293.
	
	\bibitem[DM3]{DM2}
	G. Daskalopoulos and C. Mese, \textit{Harmonic maps between singular spaces I}, Comm. Anal. Geom. \textbf{18} (2010), 257-337.	
	
	\bibitem[DFN]{DFN}
	B. A. Dubrovin, S. P Novikov and A. T Fomenko. \textit{Modern Geometry Methods and Applications: Part Ii: The Geometry and Topology of Manifolds.} New York, NY: Springer New York , 1985.	
	
	\bibitem[EF]{EF}
	J. Eells and B. Fuglede, \textit{Harmonic maps between Riemannian polyhedra}, Cambridge Tracts Maths., \textbf{142}, Cambridge University Press, Cambridge 2001.
	
	\bibitem[ES]{ES}
	J. Eells and J. Sampson, \textit{Harmonic Mappings of Riemannian Manifolds.}, American Journal of Mathematics, 86(1), 109-160, 1964.
	
	\bibitem[GH]{Sur}
	E. Ghys and P. de la Harpe (Ed.) \textit{Sur les groupes hyperboliques d'apr\`es Mikhael Gromov}, Progress in Mathematics, 83. (Birkh\"auser Boston, Inc., Boston, 1990).
	
	\bibitem[GR]{GR}
	M. Gerstenhaber and H. E. Rauch, \textit{On extremal quasiconformal mappints I,II.} Proc. Nat. Acad. Sci. U.S.U. \textbf{40} (1954), 808-812, 991-994.
	
	\bibitem[GS]{GS}
	M. Gromov and R. Schoen, \textit{Harmonic maps into singular spaces and p-adic superrigidity for lattices in groups of rank one}, Publ. Maths. IHES \textbf{76} (1992), 165-246.
	
	\bibitem[IKO]{IKO}
	T. Iwaniec, L. Kovalev and J. Onninen, \textit{Hopf differentials and smoothing Sobolev homeomorphisms}, Int. Math. Res. Not. IMRN 2012, no. 14, 3256?3277.

	\bibitem[J1]{J}
	J. Jost, \textit{Two-dimensional Geometric Variational Problems}, Chichester: J. Wiley \& Sons, 1991.
	
	\bibitem[J2]{J2}
	J. Jost, \textit{Equilibrium maps between metric spaces.}, Calc. Var. and P.D.E. (1994), 173-204.
	
	\bibitem[JS]{JS}
	J. Jost and R. Schoen, \textit{On the existence of harmonic diffeomorphisms between surfaces}, Inv. Math. \textbf{66} (1982), 353-359.
	
	\bibitem[KS]{KS1}
	N. Korevaar and R. Schoen, \textit{Sobolev spaces and harmonic maps for metric space targets}, Comm. Anal. Geom. \textbf{1} (1993), 561-659.
		
	\bibitem[K]{K}
	E. Kuwert. \textit{Harmonic maps between flat surfaces with conical singularities.} Math. Z. \textbf{221} (1996),421-436.
	
	\bibitem[M]{M}
	C. Morrey, \textit{Multiple Integrals In the Calculus of Variations}. New York: Springer, 1966. 
	
	\bibitem[MY]{MY}
	C. Mese and S. Yamada, \textit{Local uniformization and free boundary regularity of minimal singular surfaces.} Journal of Differential Geometry \textbf{21} (2001), 743-766.

	\bibitem[S]{S}
	J.H. Sampson, \textit{Some properties and applications of harmonic mappings},
	Ann. Sci. \`Ecole, Norm. Sup. 4	(1978), 211-228.
	
	
	\bibitem[SY]{SY}
	R. Schoen and S. Yau, \textit{On Univalent harmonic maps between surfaces}, Invent. Math. \textbf{44}  (1978), 265-278
	
	\bibitem[T]{T}
	W. P. Thurston, \textit{Three-dimensional geometry and Topology}, Vol. 1, (Edited by Silvio Levy), Princeton Mathematical Series 35, 1997. 

	\bibitem[W]{W}
	M. Wolf, \textit{The Teichm\"uller Theory of Harmonic Maps}, J. Diff. Geo. \textbf{29} (1989), 449-479.

\end{thebibliography}
\end{document}